\newcommand{\pd}{\partial}
\newcommand{\bC}{{\mathbb C}}
\newcommand{\bQ}{{\mathbb Q}}
\newcommand{\bX}{{\mathbb X}}
\newcommand{\bZ}{{\mathbb Z}}
\newcommand{\half}{\frac{1}{2}}
  \DeclareMathOperator{\ord}{ord}
\DeclareMathOperator{\res}{res}
\newtheorem{theorem}{Theorem}[section]
\newtheorem{theorem/definition}{Theorem/Definition}[section]
\newtheorem{proposition}{Proposition}[section]
\newtheorem{lemma}{Lemma}[section]
\newtheorem{corollary}{Corollary}[section]
\newtheorem{Conjecture}{Conjecture}
\theoremstyle{remark}
\newtheorem{remark}{Remark}[section]
\theoremstyle{definition}
\newcommand{\bml}{\begin{multline}}
\newcommand{\eml}{\end{multline}}
\newcommand{\bag}{\begin{align}}
\newcommand{\egn}{ \end{align}}
\newcommand{\be}{\begin{equation}}
\newcommand{\ee}{\end{equation}}
\newcommand{\bea}{\begin{eqnarray}}
\newcommand{\eea}{\end{eqnarray}}
\newcommand{\ben}{\begin{eqnarray*}}
\newcommand{\een}{\end{eqnarray*}}
\newcommand{\bet}{\begin{equation}
\begin{split}}
\newcommand{\eet}{\end{split}
\end{equation}}
\definecolor{yellow}{rgb}{1,1,0}
\definecolor{orange}{rgb}{1,.7,0}
\definecolor{red}{rgb}{1,0,0} \definecolor{blue}{rgb}{0,0,1}
\definecolor{white}{rgb}{1,1,1}
\definecolor{A}{rgb}{.75,1,.75}
\begin{document}

\title
{Some Integrality Properties in Local Mirror Symmetry}
\author{Jian Zhou}
\address{Department of Mathematical Sciences\\Tsinghua University\\Beijing, 100084, China}
\email{jzhou@math.tsinghua.edu.cn}

\begin{abstract}
We prove some integrality properties of the open-closed mirror maps, inverse open-closed mirror maps and mirror curves
of some local Calabi-Yau geometries.
\end{abstract}


\maketitle

\section{Introduction}

In this paper we will establish some integrality properties
of some geometric objects in the study of local mirror symmetry,
such as local open-closed mirror map,
its inverse map and the equation of the mirror curve.
Mirror symmetry of Calabi-Yau $3$-folds relates two kinds of moduli spaces.
Given a compact Calabi-Yau $3$-folds $M$,
one can consider some special coordinates on its complexified K\"ahlar moduli space
and some special coordinates on the moduli space of complex structures on its mirror manifold $W$.
The mirror map is a map between these two moduli spaces
relating the relevant special coordinates.
It can be given by solutions of some Picard-Fuchs equations.
An amazing property of the mirror map is that its Taylor coefficients are rational integers.
(A close analogy is that modular functions or modular forms on the moduli space of elliptic curves
often have integral Fourier coefficients.)
This was first established by Lian and Yau \cite{Lia-Yau} for
some hypersurfaces in projective spaces using Dwork's Lemma,
Dwork's theory of $p$-adic hypergeometric series, and
$p$-adic Gamma function.
Their  method has been used  by other authors
to establish Lian-Yau integrality in more general cases,
see Zudilin \cite{Zud}, Krattenthaler-Rivoal \cite{Kra-Riv, Kra-Riv2}
and Delaygue \cite{Del}.

Local mirror symmetry \cite{CKYZ} is the extension of the mirror symmetry
to the case of some open Calabi-Yau $3$-folds.
In this case the Picard-Fuchs equations have the constant function $1$ as a trivial solution.
It follows that the local mirror map takes a much simpler form than
in the case of closed Calabi-Yau $3$-folds.
One can also consider the open-closed mirror maps of some noncompact Calabi-Yau
$3$-folds by extending the Picard-Fuchs system \cite{Ler-May}.
It turns out that the integrality of the local open-closed mirror maps can be reduced to the congruence properties of
the multinomial coefficients by Dwork's Lemma.
In this paper
we will verify directly the integrality properties of these multinomial numbers.
The integrality of the inverse local open-closed mirror maps follows easily from the integrality of the local
open-closed mirror maps.

In a separate  paper \cite{Zho},
we will make some observations on the integrality properties of the open-closed mirror maps
in the case of compact Calabi-Yau manifolds.

One can also obtain from the extended Picard-Fuch system corresponds
the superpotential which encodes the disc invariants.
According to the conjecture \cite{Aga-Vaf, Aga-Kle-Vaf} disc invariants
determines the mirror curve of the local Calabi-Yau geometry.
We will show that in the case of disc invariants with an outer brane,
the equation for the mirror curve has some integrality property.
In the case of an inner brane,
one will verify the integrality property of a related map,
whose geometric meaning will be clarified in a forthcoming work \cite{Zho2}.

The rest of this paper is arranged as follows.
In Section 2 we prove some inequalities related to $p$-adic expressions
of numbers.
They are used to establish some congruence properties of some multinomial numbers in Section 3,
which are applied in Section 4 to prove the integrality of some power series by Dwork's Lemma.
In Section 5 and Section 6 we apply these integrality properties to the study of local mirror symmetry
of some open Calabi-Yau $n$-folds.

\vspace{.1in}
{\em Acknowledgements}.
This research is partly supported by NSFC grants (10425101 and 10631050)
and a 973 project grant NKBRPC (2006cB805905).
The author thanks Professor Christian Krattenthaler for bringing \cite{Good} and \cite{Del} to his attentions.
He also thanks Hanxiong Zhang and Yinhua Ai for communicating to him the proofs of Proposition \ref{prop:Conj2}
and Proposition \ref{prop:Conj3} respectively.

\section{Some Inequalities for $p$-Adic Expressions}

Let $p$ be a prime.
Given a positive rational integer $n = a_0 + a_1 p + \cdots + a_s p^s$,
$0 \leq a_0, \dots, a_s < p$,
define
\be
S_p(n) = a_0 + a_1 + \cdots + a_s.
\ee
Let $0 \leq i \leq s$ be the smallest number such that $a_i \neq 0$,
define
\be
\ord_p (n) = i.
\ee
It is clear that
\be
S_p(p^rn) = S_p(n), \qquad \ord_p(p^rn) = \ord_p(n) + r.
\ee
For a rational number $\frac{a}{b} \in \bQ$,
set $\ord_p (\frac{a}{b}) = \ord_p(a) - \ord_p(b)$.
Then $\frac{a}{b} \in \bZ_p$ iff $\ord_p (\frac{a}{b}) \geq 0$.

The following is well-known and will be used repeatedly below \cite{Kob}:
\be
\ord_p(n!) = \frac{n-S_p(n)}{p-1}.
\ee
Applying this formula to $n =p^ra$ and $n/p$, where $r \geq 1$ and $a$ is a rational integer,
one gets:
\be \label{eqn:Order}
\ord_p (\frac{(p^ra)!}{(p^{r-1}a)!})
= p^{r-1}a.
\ee
Furthermore,
it is not hard to see that
\be \label{eqn:Remainder}
p^{-p^{r-1}a} \frac{(p^ra)!}{(p^{r-1}a)!} \equiv (\prod_{\substack{1 \leq j < p^r \\(p,n) = 1}} j)^a \pmod{p^r}.
\ee
A well-known fact is that \cite{Kob}:
\be
\prod_{\substack{1 \leq j < p^r \\(p,n) = 1}} j
\equiv
\begin{cases}
-1 \pmod{p^r}, & \text{when $p$ is odd}, \\
-1 \pmod{p^r}, & \text{when $p=2$ and $r = 2$}, \\
1 \pmod{p^r}, & \text{when $p=2$ and $r \neq 2$}.
\end{cases}
\ee
We will not use this fact below.

\begin{lemma}
For a positive rational integer $n$,
the following inequality holds:
\be
n - S_p(n) \geq (p-1) \ord_p(n).
\ee
\end{lemma}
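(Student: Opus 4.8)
The plan is to use the exact formula $\ord_p(n!) = \frac{n - S_p(n)}{p-1}$ recorded above, together with the observation that $\ord_p(n!)$ counts at least the contribution of the single factor $p^{\ord_p(n)}$ dividing $n$ itself, or more simply, to argue directly from the $p$-adic expansion. Write $n = a_0 + a_1 p + \cdots + a_s p^s$ with $0 \leq a_j < p$, and let $i = \ord_p(n)$, so that $a_0 = \cdots = a_{i-1} = 0$ and $a_i \neq 0$. Then $S_p(n) = a_i + a_{i+1} + \cdots + a_s$. First I would observe the trivial bound $a_j \leq (p-1)$ for each $j$, but that alone is not enough; the point is rather to compare $n$ term by term with $S_p(n)$.

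The key step is the elementary identity
\be
n - S_p(n) = \sum_{j=0}^{s} a_j (p^j - 1) = \sum_{j=i}^{s} a_j(p^j - 1),
\ee
where the second equality uses that $a_j = 0$ for $j < i$. Now for each $j \geq i$ we have $p^j - 1 \geq p^i - 1$, and hence
\be
n - S_p(n) = \sum_{j=i}^{s} a_j(p^j - 1) \geq (p^i - 1) \sum_{j=i}^{s} a_j = (p^i - 1) S_p(n) \geq (p^i - 1) a_i \geq p^i - 1,
\ee
using $S_p(n) \geq a_i \geq 1$. Finally I would compare $p^i - 1$ with $(p-1) i = (p-1)\ord_p(n)$: since $p^i - 1 = (p-1)(1 + p + \cdots + p^{i-1}) \geq (p-1) i$ because each of the $i$ summands $1, p, \dots, p^{i-1}$ is at least $1$. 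Chaining these inequalities gives $n - S_p(n) \geq (p-1)\ord_p(n)$, as desired. (The case $i = 0$ is trivial since the right-hand side is then $0 \leq n - S_p(n)$, which always holds as each $p^j - 1 \geq 0$.)

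I do not expect any serious obstacle here; the statement is essentially a bookkeeping inequality about base-$p$ digits. The only mild subtlety is making sure the chain of inequalities is set up so that the factor $p^i - 1$ is produced cleanly and then bounded below by $(p-1)i$; an alternative, equally short route is to note directly that $\ord_p(n!) \geq \ord_p(n) \cdot \lfloor \log_p n \rfloor^{-1}$-type estimates are unnecessary, and instead simply invoke that $p^{\ord_p(n)} \mid n$ implies, via the Legendre/Kummer formula, that $\frac{n - S_p(n)}{p-1} = \ord_p(n!) \geq \ord_p(n)$ is actually a weaker statement than what we want; so the digit-sum computation above is the cleanest. I would present the version via the identity $n - S_p(n) = \sum_j a_j(p^j-1)$.
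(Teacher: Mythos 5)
Your proof is correct and follows essentially the same route as the paper: both start from the identity $n - S_p(n) = \sum_j a_j(p^j - 1)$, reduce to the lowest nonzero digit, and use $p^i - 1 = (p-1)(1 + p + \cdots + p^{i-1}) \geq (p-1)i$. The only cosmetic difference is that the paper factors out $(p-1)$ before discarding terms while you do so at the end.
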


\begin{proof}
Write $n=a_r p^r + \cdot + a_s p^s$, $r \leq s$, $0 \leq a_r, \dots, a_s < p$, $a_r >0$.
Then we have
\ben
n -S_p(n) & = & a_r (p^r -1) + \cdot + a_s (p^s -1) \\
&= &(p-1) [a_r (p^{r-1} + \cdots + 1) + \cdots + a_s (p^{s-1} + \cdots + 1)] \\
&\geq & a_r (p-1) r \geq (p-1) r = (p-1) \ord_p(n).
\een

\end{proof}

\subsection{Additive inequalities}
\begin{lemma}
For positive integers $a$ and $b$,
the following inequality holds for all prime $p$:
\be
S_p(a) + S_p(b) \geq S_p(a+b) + (p-1) \cdot
(\ord_p (a+b) - \min \{ \ord_p(a), \ord_p(b) \}  ).
\ee
\end{lemma}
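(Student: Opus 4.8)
The plan is to reinterpret the combination $S_p(a)+S_p(b)-S_p(a+b)$ as a $p$-adic valuation of a binomial coefficient and then bound that valuation from below. First, applying the identity $\ord_p(n!) = \frac{n-S_p(n)}{p-1}$ (recalled above) to $n=a+b$, $n=a$ and $n=b$ gives
\be
\ord_p \binom{a+b}{a} \;=\; \ord_p \frac{(a+b)!}{a!\, b!} \;=\; \frac{S_p(a)+S_p(b)-S_p(a+b)}{p-1},
\ee
so the asserted inequality is equivalent to the purely valuation-theoretic statement
\be
\ord_p \binom{a+b}{a} \;\geq\; \ord_p(a+b) - \min\{\ord_p(a),\ord_p(b)\}.
\ee

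To prove this, I would use the two elementary absorption identities
\be
b \binom{a+b}{a} = (a+b)\binom{a+b-1}{a}, \qquad a\binom{a+b}{a} = (a+b)\binom{a+b-1}{a-1},
\ee
which are valid since $a,b\geq 1$ (so that $\binom{a+b-1}{a}$ and $\binom{a+b-1}{a-1}$ are genuine nonnegative integers, hence have $\ord_p \geq 0$). Taking $\ord_p$ of the first identity yields $\ord_p(b) + \ord_p\binom{a+b}{a} \geq \ord_p(a+b)$, and of the second yields $\ord_p(a) + \ord_p\binom{a+b}{a} \geq \ord_p(a+b)$. Combining these two one-sided bounds gives precisely $\ord_p\binom{a+b}{a} \geq \ord_p(a+b) - \min\{\ord_p(a),\ord_p(b)\}$, and substituting this into the displayed equivalence completes the argument.

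There is no serious obstacle once the reformulation in terms of $\ord_p\binom{a+b}{a}$ is in place; the only point that genuinely needs attention is that one must exploit the symmetry between $a$ and $b$, i.e.\ use \emph{both} absorption identities, in order to obtain the $\min$ on the right-hand side rather than a weaker one-sided estimate. (As a cross-check, this statement is equivalent to the Kummer-type fact that, after dividing out $p^{\min\{\ord_p(a),\ord_p(b)\}}$ so that $a$ or $b$ has nonzero units digit, a base-$p$ addition of $a$ and $b$ whose sum is divisible by $p^k$ must produce at least $k$ carries; one could also organize the proof that way via a short induction on the low-order digit positions, but the binomial-identity version is shorter and self-contained given the material already recalled in this section, so I would present that one.)
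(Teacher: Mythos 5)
Your proof is correct, and it takes a genuinely different route from the paper. The paper proves the inequality by a direct base-$p$ digit analysis: assuming WLOG $\ord_p(a)=0$ and $\ord_p(a+b)=r$, it observes that the cancellation of the first $r$ digits forces $b_0=p-a_0$, $b_i=p-a_i-1$ for $1\le i\le r-1$, so that these positions alone contribute $r(p-1)+1$ to $S_p(a)+S_p(b)$, and then applies subadditivity of $S_p$ to the remaining digits --- essentially Kummer's carry-counting argument. You instead use Legendre's formula $\ord_p(n!)=\frac{n-S_p(n)}{p-1}$ (already recalled in this section) to rewrite the claim as $\ord_p\binom{a+b}{a}\ge \ord_p(a+b)-\min\{\ord_p(a),\ord_p(b)\}$, and then deduce this from the two absorption identities $b\binom{a+b}{a}=(a+b)\binom{a+b-1}{a}$ and $a\binom{a+b}{a}=(a+b)\binom{a+b-1}{a-1}$, each valid for $a,b\ge 1$ and each giving one of the two one-sided bounds whose combination produces the $\min$. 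Both arguments are complete; yours avoids the digit bookkeeping entirely, and it has the added advantage that it extends verbatim to the $n$-summand version (Lemma \ref{lm:Additive2}) without induction, since $a_j\binom{\sum_i a_i}{a_1,\dots,a_n}=(\sum_i a_i)\binom{\sum_i a_i-1}{a_1,\dots,a_j-1,\dots,a_n}$ for each $j$. The paper's carry analysis, on the other hand, makes visible exactly where equality can occur and is closer in spirit to the other $p$-adic estimates used throughout Section 2.
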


\begin{proof}
Without loss of generality,
assume $\ord_p(b) \geq \ord_p(a) = 0$.
Write $a= a_0 + a_1 p + a_2 p + \cdots + a_k p^k$,
$b = b_0 + b_1 p + \cdots + b_k p^k$,
and $a+b = c_r p^r + c_{r+1} p^{r+1} + \cdots + c_{k+1} p^{k+1}$.
When $r = 0$,
because
\ben
&& (a_0 + a_1 p+ \cdots + a_k p^k)+(b_0 + b_1 p + \cdots + b_k p^k)
= c_0 + c_1p+ \cdots + c_{k+1} p^{k+1},
\een
it is easy to see that
$$(a_0 + \cdots + a_k) + (b_0 + \cdots + b_k) \geq c_0 + \cdots + c_{k+1}.$$
Now suppose that $r > 0$,
from
\be
(a_0 + a_1 p  + \cdots + a_k p^k)+(b_0 + b_1 p + \cdots + b_k p^k)
= c_rp^r + c_{r+1}p^{r+1}+ \cdots + c_{k+1} p^{k+1},
\ee
we see that
\be
b_0 = p-a_0, b_1 = p-a_1 - 1, \dots, b_{r-1} = p- a_{r-1} - 1, c_r =a_r + b_r+1 \pmod{p}.
\ee
Therefore,
$$S_p(a) + S_p(b) = r (p -1)+ 1+ \sum_{i=r}^k (a_i+b_i)
\geq r (p - 1) + \sum_{i=r}^{k+1} c_i.$$
\end{proof}

By induction one can also prove the following:

\begin{lemma} \label{lm:Additive2}
For positive integers $a_1, \dots, a_n$,
the following inequality holds for all prime $p$:
\be
\sum_{i=1}^n S_p(a_i)  \geq S_p(\sum_{i=1}^n a_i) + (p-1) \cdot
(\ord_p (\sum_{i=1}^n a_i) - \min \{ \ord_p(a_1),\dots, \ord_p(a_n) \}  ).
\ee
\end{lemma}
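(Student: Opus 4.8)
The plan is to prove Lemma \ref{lm:Additive2} by induction on $n$, using the two-variable additive inequality (the preceding lemma) as the base case $n = 2$. Set $s_k = \sum_{i=1}^k a_i$ for the partial sums. Assume the inequality holds for $n-1$ summands, so that
\be
\sum_{i=1}^{n-1} S_p(a_i) \geq S_p(s_{n-1}) + (p-1)\bigl(\ord_p(s_{n-1}) - \mu_{n-1}\bigr),
\ee
where $\mu_{n-1} = \min\{\ord_p(a_1),\dots,\ord_p(a_{n-1})\}$. Adding $S_p(a_n)$ to both sides and applying the two-variable lemma to the pair $(s_{n-1}, a_n)$ gives
\be
\sum_{i=1}^n S_p(a_i) \geq S_p(s_n) + (p-1)\bigl(\ord_p(s_n) - \min\{\ord_p(s_{n-1}),\ord_p(a_n)\}\bigr) + (p-1)\bigl(\ord_p(s_{n-1}) - \mu_{n-1}\bigr).
\ee
So it remains to show that the sum of the two correction terms dominates $(p-1)(\ord_p(s_n) - \mu_n)$, where $\mu_n = \min\{\mu_{n-1}, \ord_p(a_n)\}$; i.e., after dividing by $p-1$, that
\be
\ord_p(s_{n-1}) - \mu_{n-1} - \min\{\ord_p(s_{n-1}),\ord_p(a_n)\} \geq -\mu_n.
\ee

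The key elementary fact to invoke is the ultrametric inequality for $\ord_p$: for any integers $x,y$, one has $\ord_p(x) \geq \min\{\ord_p(x-y), \ord_p(y)\}$, and more symmetrically $\ord_p(x+y) \geq \min\{\ord_p(x),\ord_p(y)\}$. From $\ord_p(s_{n-1}) \geq \mu_{n-1}$ (a consequence of the induction hypothesis, since the right-hand side there is manifestly $\geq S_p(s_{n-1}) \geq \ord_p(s_{n-1})\cdot$nothing — more directly: $s_{n-1}$ is a sum of terms each divisible by $p^{\mu_{n-1}}$) we get $\mu_n = \min\{\mu_{n-1}, \ord_p(a_n)\} \leq \min\{\ord_p(s_{n-1}), \ord_p(a_n)\}$. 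Writing $m = \min\{\ord_p(s_{n-1}), \ord_p(a_n)\}$, the desired inequality becomes $\ord_p(s_{n-1}) - \mu_{n-1} - m \geq -\mu_n$, i.e. $\ord_p(s_{n-1}) + \mu_n \geq \mu_{n-1} + m$. Since $\mu_n \leq m$ and $\mu_n \leq \mu_{n-1}$, and since $\ord_p(s_{n-1}) \geq \mu_{n-1}$, we split into cases on which of $\ord_p(s_{n-1})$, $\ord_p(a_n)$ achieves $m$: if $m = \ord_p(a_n)$ then $\mu_n \leq \ord_p(a_n) = m$ could be strict only if $\mu_{n-1}$ is smaller, and $\ord_p(s_{n-1}) \geq \mu_{n-1}$ closes it; if $m = \ord_p(s_{n-1})$ then the left side is $m + \mu_n$ and we need $\mu_n \geq \mu_{n-1}$...

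The main obstacle is exactly that last case bookkeeping: when $\ord_p(s_{n-1}) < \ord_p(a_n)$, one needs $\ord_p(s_{n-1}) + \mu_n \geq \mu_{n-1} + \ord_p(s_{n-1})$, i.e. $\mu_n \geq \mu_{n-1}$, which fails in general. The resolution is that in this case $\mu_n = \mu_{n-1}$ because $\ord_p(a_n) > \ord_p(s_{n-1}) \geq \mu_{n-1}$ forces $\ord_p(a_n) \geq \mu_{n-1}$, hence $\mu_n = \min\{\mu_{n-1},\ord_p(a_n)\} = \mu_{n-1}$; so equality holds and the inequality is tight. In the complementary case $\ord_p(a_n) \leq \ord_p(s_{n-1})$, we need $\ord_p(s_{n-1}) + \mu_n \geq \mu_{n-1} + \ord_p(a_n)$: if $\ord_p(a_n) \geq \mu_{n-1}$ then $\mu_n = \mu_{n-1}$ and we need $\ord_p(s_{n-1}) \geq \ord_p(a_n)$, which holds by assumption; if $\ord_p(a_n) < \mu_{n-1}$ then $\mu_n = \ord_p(a_n)$ and we need $\ord_p(s_{n-1}) \geq \mu_{n-1}$, which again holds. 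This disposes of all cases, completing the induction. I would present it by first isolating the reduced inequality $\ord_p(s_{n-1}) + \mu_n \geq \mu_{n-1} + \min\{\ord_p(s_{n-1}),\ord_p(a_n)\}$, noting $\ord_p(s_{n-1}) \geq \mu_{n-1}$, and then a short two-case (or three-case) analysis.
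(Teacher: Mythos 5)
Your proof is correct and is exactly the argument the paper has in mind: the paper offers no details beyond ``by induction,'' and your induction on $n$ via the two-variable lemma applied to the pair $(s_{n-1},a_n)$, together with the observation $\ord_p(s_{n-1})\geq\mu_{n-1}$ to handle the correction terms, supplies precisely the missing bookkeeping. The case analysis closing the reduced inequality $\ord_p(s_{n-1})+\mu_n\geq\mu_{n-1}+\min\{\ord_p(s_{n-1}),\ord_p(a_n)\}$ is complete and sound.
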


\subsection{Multiplicative inequalities}

\begin{lemma} \label{lm:Multiplicative}
The following inequality holds for any positive integers $m$, $n$ and any prime $p$:
\be \label{eqn:Multiplicative}
S_p(mn) \leq S_p(m) \cdot S_p(n).
\ee
\end{lemma}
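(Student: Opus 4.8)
The plan is to reduce the multiplicative bound \eqref{eqn:Multiplicative} to the subadditivity of the digit-sum function $S_p$, combined with the shift invariance $S_p(p^r k) = S_p(k)$ recorded at the beginning of this section.

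\textbf{Step 1: Subadditivity.} First I would observe that Lemma \ref{lm:Additive2} immediately yields
\[
S_p\Big(\sum_{i=1}^N b_i\Big) \le \sum_{i=1}^N S_p(b_i)
\]
for all positive integers $b_1,\dots,b_N$: indeed $\ord_p\big(\sum_i b_i\big) \ge \min_i \ord_p(b_i)$, so the correction term $(p-1)\big(\ord_p(\sum_i b_i) - \min_i \ord_p(b_i)\big)$ on the right side of that lemma is non-negative and may be discarded. Specializing to $b_1 = \dots = b_N = k$ with $N = c$, one obtains $S_p(ck) \le c\,S_p(k)$ for every positive integer $c$.

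\textbf{Step 2: Expand in base $p$ and assemble.} Next I would write $m = a_0 + a_1 p + \cdots + a_s p^s$ with $0 \le a_j < p$, so that $mn = \sum_{j} a_j(p^j n)$, where the sum runs over those $j$ with $a_j \neq 0$. Applying Step 1 to this decomposition and then using $S_p(p^j \cdot a_j n) = S_p(a_j n)$ gives
\[
S_p(mn) \;\le\; \sum_{j : a_j \neq 0} S_p(a_j p^j n) \;=\; \sum_{j : a_j \neq 0} S_p(a_j n) \;\le\; \sum_{j : a_j \neq 0} a_j\, S_p(n) \;=\; S_p(m)\, S_p(n),
\]
where the middle inequality is the special case of Step 1 with $k = n$ and $c = a_j$, and the last equality is the definition of $S_p(m)$. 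This is exactly the asserted inequality.

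There is no substantial obstacle here; the only points demanding a little care are restricting the base-$p$ expansion sum to indices with $a_j \neq 0$ so that Step 1 is applied to genuinely positive integers (the omitted terms contribute nothing), and recalling that $\ord_p$ of a sum is bounded below by the minimum of the summands' orders, which is what licenses dropping the correction term in Lemma \ref{lm:Additive2}.
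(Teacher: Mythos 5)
Your proof is correct. It rests on the same engine as the paper's — subadditivity of $S_p$ together with a base-$p$ expansion — but the decomposition is slightly different: the paper expands \emph{both} factors, writes $mn=\sum_k(\sum_{i+j=k}a_ib_j)p^k$, applies subadditivity to the resulting double sum, and then invokes the auxiliary bound $S_p(a_ib_j)\le a_ib_j$ for the digit products; you expand only $m$, write $mn=\sum_j a_jp^jn$, and replace that auxiliary bound by $S_p(cn)\le c\,S_p(n)$, itself an iterated application of subadditivity. Your route is marginally leaner, since it needs only one fact ($S_p(a+b)\le S_p(a)+S_p(b)$, correctly extracted from Lemma \ref{lm:Additive2} by discarding the non-negative correction term) plus the shift invariance $S_p(p^jk)=S_p(k)$, and you are right to restrict the sum to indices with $a_j\neq 0$ so that every summand is a positive integer. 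The paper's symmetric double-sum version buys nothing extra here, so the two arguments are of essentially equal strength; yours is a clean, self-contained alternative.
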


\begin{proof}
Write $m = b_0 + b_1 p + \cdots +b_t p^t$, $n =a_0 + a_1 p \cdots + a_s p^s$,
where $0 \leq b_0, \dots, b_t$, $a_0, \dots, a_s < p$.
It is easy to see that for any positive integers $a, b$,
\be
S_p(a+b) \leq S_p(a) + S_p(b),
\ee
and when $a,b$ are positive integers smaller than $p$,
\be
S_p(ab) \leq ab.
\ee
Therefore,
we have
\ben
S_p(mn) & = & S_p(\sum_k (\sum_{i+j=k} a_ib_j )p^k)
\leq \sum_k \sum_{i+j =k} S_p(a_ib_j) \\
& \leq & \sum_k \sum_{i+j=k} a_i \cdot b_j
= S_p(m) \cdot S_p(n).
\een
\end{proof}

\begin{corollary}
The following inequality holds for any positive integers $m$, $n$ and any prime $p$:
\be
m S_p(n) - S_p(mn) + S_p(m) - m \geq (m- S_p(m)) \cdot (S_p(n) -1).
\ee
In particular,
when $S_p(n) > 1$,
i.e.,
$n \neq p^r$ for some $r \geq 0$,
\be
m S_p(n) - S_p(mn) + S_p(m) - m \geq m - S_p(m).
\ee
\end{corollary}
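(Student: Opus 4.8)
The plan is to prove the displayed inequality
$$
m S_p(n) - S_p(mn) + S_p(m) - m \geq (m - S_p(m)) \cdot (S_p(n) - 1)
$$
by simply rearranging it into a form where Lemma \ref{lm:Multiplicative} applies directly. Expanding the right-hand side gives $m S_p(n) - m - S_p(m) S_p(n) + S_p(m)$, so after cancelling the common terms $m S_p(n)$, $-m$, and $+S_p(m)$ from both sides, the inequality is equivalent to
$$
- S_p(mn) \geq - S_p(m) S_p(n),
$$
i.e. $S_p(mn) \leq S_p(m) S_p(n)$, which is exactly \eqref{eqn:Multiplicative}. So the first statement is just Lemma \ref{lm:Multiplicative} in disguise, and there is essentially nothing to prove beyond recording this algebraic identity.

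For the ``in particular'' clause, I would argue as follows. By Lemma \ref{lm:Multiplicative} (in the trivial form $S_p(m) \geq 1$ whenever $m \geq 1$), and since $S_p(m) \leq m$ always holds, the factor $m - S_p(m)$ is nonnegative. When $S_p(n) > 1$ we have $S_p(n) - 1 \geq 1$, so
$$
(m - S_p(m))(S_p(n) - 1) \geq (m - S_p(m)) \cdot 1 = m - S_p(m),
$$
and combining with the first inequality yields the claimed bound. The side remark that $S_p(n) > 1$ is equivalent to $n$ not being a power of $p$ is immediate from the definition of $S_p$: a positive integer $n$ has $S_p(n) = 1$ precisely when its base-$p$ expansion has a single nonzero digit equal to $1$, i.e. $n = p^r$ for some $r \geq 0$; otherwise $S_p(n) \geq 2$.

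There is no real obstacle here — the corollary is a formal consequence of Lemma \ref{lm:Multiplicative} together with the two elementary observations $1 \leq S_p(m) \leq m$. The only point requiring any care is the bookkeeping in the algebraic rearrangement, to make sure the correct terms cancel so that the residual inequality is precisely \eqref{eqn:Multiplicative} rather than something weaker or stronger; once that is checked, both parts follow in one line each.
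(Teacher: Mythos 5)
Your argument is correct and is essentially the paper's own proof: both reduce the first inequality, by elementary algebraic rearrangement, to the single application of Lemma \ref{lm:Multiplicative} in the form $S_p(mn) \leq S_p(m)S_p(n)$, and both derive the second inequality from $m \geq S_p(m)$ together with $S_p(n) - 1 \geq 1$. No further comment is needed.
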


\begin{proof}
Because we have $ m \geq S_p(m)$ and $S_p(n) \geq 1$,
by the inequality \eqref{eqn:Multiplicative} we have
\ben
&& m S_p(n) - S_p(mn) + S_p(m) - m \\
& \geq & m S_p(n) - S_p(m)\cdot S_p(n) + S_p(m) - m \\
& = & (m- S_p(m)) \cdot (S_p(n) -1).
\een
Then the above equalities are obvious.
\end{proof}

\section{Congruence Properties  of Some Multinomial Numbers}

We will use the inequalities obtained in last section
to derive some congruence properties of
some multinomial numbers.

\subsection{Congruence properties of $\binom{k_1+ \cdots+ k_n}{k_1, \dots, k_n}$}

\begin{proposition} \label{prop:1}
For rational integers $k_1, \dots, k_n \geq 0$ such that $\sum_{i=1}^n k_i > 0$,
and $(p, k_1, \dots, k_n) = 1$,
we have
\be
\frac{1}{\sum_{i=1}^n k_i} \binom{\sum_{i=1}^n k_i}{k_1, \dots, k_n} \in \bZ_p.
\ee
\end{proposition}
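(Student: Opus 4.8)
The plan is to set $N = \sum_{i=1}^n k_i$ and show that $\ord_p\!\bigl(\frac{1}{N}\binom{N}{k_1,\dots,k_n}\bigr) \geq 0$ for every prime $p$, which is equivalent to the claimed membership in $\bZ_p$. Since $\binom{N}{k_1,\dots,k_n} = \frac{N!}{k_1!\cdots k_n!}$, the formula $\ord_p(m!) = \frac{m - S_p(m)}{p-1}$ from the excerpt gives
\be
\ord_p\!\Bigl(\frac{1}{N}\binom{N}{k_1,\dots,k_n}\Bigr)
= \frac{1}{p-1}\Bigl( \sum_{i=1}^n S_p(k_i) - S_p(N) \Bigr) - \ord_p(N).
\ee
So the entire statement reduces to the single inequality
\be \label{eqn:plan-goal}
\sum_{i=1}^n S_p(k_i) - S_p(N) \geq (p-1)\,\ord_p(N).
\ee

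The key input is Lemma \ref{lm:Additive2}, which (applied to those $k_i$ that are positive — the zero terms contribute nothing) gives
\be
\sum_{i=1}^n S_p(k_i) \geq S_p(N) + (p-1)\bigl(\ord_p(N) - \min_i \ord_p(k_i)\bigr),
\ee
where the minimum runs over the indices with $k_i > 0$. Thus \eqref{eqn:plan-goal} follows as soon as $\min_i \ord_p(k_i) = 0$, i.e. as soon as some $k_i$ is not divisible by $p$. But that is exactly the hypothesis $(p, k_1, \dots, k_n) = 1$: the gcd of the $k_i$ with $p$ is $1$, so $p$ cannot divide every $k_i$, hence at least one positive $k_i$ has $\ord_p(k_i) = 0$. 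Combining the last two displays yields \eqref{eqn:plan-goal}, and the proposition follows.

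I do not expect any serious obstacle here: the content is entirely packaged in Lemma \ref{lm:Additive2}, and the proof is just a matter of assembling the $p$-adic order formula, that lemma, and the coprimality hypothesis in the right order. The only points requiring a word of care are (i) restricting the sum and the minimum to the indices with $k_i > 0$ so that $S_p$ and $\ord_p$ are defined and Lemma \ref{lm:Additive2} applies verbatim, and (ii) noting the case $n = 1$ separately — there $N = k_1$ is coprime to $p$, so $\ord_p(N) = 0$ and the quantity in question is $\frac{1}{k_1}\cdot 1$, which is a $p$-adic unit. One should also remark that the displayed identity for $\ord_p$ is valid as an equality in $\bQ$ and the conclusion "$\geq 0 \Leftrightarrow \in \bZ_p$" is the standard characterization recalled at the start of Section 2.
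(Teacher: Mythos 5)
Your proof is correct and follows essentially the same route as the paper's: compute the $p$-adic order via $\ord_p(m!)=\frac{m-S_p(m)}{p-1}$, then combine Lemma \ref{lm:Additive2} with the observation that $(p,k_1,\dots,k_n)=1$ forces $\min_i\ord_p(k_i)=0$. Your extra remarks about discarding the indices with $k_i=0$ and the $n=1$ case are sensible housekeeping that the paper leaves implicit.
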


\begin{proof}
Because $(p, k_1, \dots, k_n) =1$,
not all $k_i$'s are divisible by $p$,
therefore,
$$\min\{\ord_p(k_1), \dots, \ord_p(k_n)\}=0.$$
So we have
\ben
&& \ord_p\biggl( \frac{1}{\sum_{i=1}^n k_i} \binom{\sum_{i=1}^n k_i}{k_1, \dots, k_n} \biggr)
= \ord_p \biggl( \frac{1}{\sum_{i=1}^n k_i} \frac{(\sum_{i=1}^n k_i)!}{\prod_{i=1}^n k_i!} \biggr) \\
& = & \ord_p ((\sum_{i=1}^n k_i)!) - \sum_{i=1}^n \ord_p (k_i!)  - \ord_p (\sum_{i=1}^n k_i) \\
& = & \frac{\sum_{i=1}^n k_i-S_p(\sum_{i=1}^n k_i)}{p-1} -\sum_{i=1}^n \frac{k_i-S_p(k_i)}{p-1}
- \ord_p(\sum_{i=1}^n k_i) \\
& = & \frac{1}{p-1} (\sum_{i=1}^n S_p(k_i)- S_p(\sum_{i=1}^n k_i)) - \ord_p(\sum_{i=1}^n k_i)
\geq 0.
\een
In the last inequality we have used Lemma \ref{lm:Additive2}.
\end{proof}

\begin{proposition} \label{prop:1.1}
For rational integers $m, k_1, \dots, k_n > 0$ such that $(p, m) = 1$,
we have
\be
\frac{1}{m} \binom{\sum_{i=1}^n k_i m}{k_1m, \dots, k_nm} \in \bZ_p.
\ee
\end{proposition}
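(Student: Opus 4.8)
The plan is to compute the $p$-adic order of $\frac{1}{m}\binom{\sum_i k_i m}{k_1 m,\dots,k_n m}$ directly, using the formula $\ord_p(N!) = (N-S_p(N))/(p-1)$ exactly as in the proof of Proposition \ref{prop:1}. Writing $K = \sum_{i=1}^n k_i$, we have
\be
\ord_p\biggl(\frac{1}{m}\binom{Km}{k_1m,\dots,k_nm}\biggr)
= \frac{1}{p-1}\biggl(\sum_{i=1}^n S_p(k_i m) - S_p(Km)\biggr) - \ord_p(m).
\ee
So the task reduces to showing
\be
\sum_{i=1}^n S_p(k_i m) - S_p(Km) \geq (p-1)\ord_p(m).
\ee

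The key idea is to split off the $p$-part of $m$. Write $m = p^e u$ with $(p,u)=1$ and $e = \ord_p(m)$; then $S_p(k_i m) = S_p(k_i u)$ and $S_p(Km) = S_p(Ku)$, so the left side equals $\sum_i S_p(k_i u) - S_p(Ku)$, while the right side is $(p-1)e = (p-1)\ord_p(m)$. Thus I must prove $\sum_{i=1}^n S_p(k_i u) - S_p(\sum_i k_i u) \geq (p-1)\ord_p(m)$ when $(p,u)=1$. Here I would apply the additive inequality, Lemma \ref{lm:Additive2}, to the $n$ positive integers $a_i = k_i u$: it gives
\be
\sum_{i=1}^n S_p(k_i u) \geq S_p\Bigl(\sum_{i=1}^n k_i u\Bigr) + (p-1)\bigl(\ord_p(Ku) - \min_i \ord_p(k_i u)\bigr).
\ee
Since $(p,u)=1$ we have $\ord_p(k_i u) = \ord_p(k_i)$, and $\min_i \ord_p(k_i)$ could be positive — this is the subtlety. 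But $\ord_p(Ku) = \ord_p(K) = \ord_p(\sum k_i) \geq \min_i \ord_p(k_i)$, and in fact we need the sharper statement that $\ord_p(Ku) - \min_i\ord_p(k_iu) \geq 0$, which holds because $\sum_i k_i u$ is divisible by $p^{\min_i \ord_p(k_i)}$. Hence the correction term on the right is nonnegative, giving $\sum_i S_p(k_i u) - S_p(Ku) \geq 0$, which is \emph{not yet} enough — we still owe $(p-1)\ord_p(m)$.

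The point that actually closes the gap is that the factors $k_i m$ all literally contain the factor $m = p^e u$, so each carries at least $e$ factors of $p$: $\ord_p(k_i m) \geq e$ for every $i$, hence $\min_i \ord_p(k_i m) \geq e$ and also $\ord_p(Km) \geq e$. Applying Lemma \ref{lm:Additive2} directly to the integers $a_i = k_i m$ (not $k_i u$) yields
\be
\sum_{i=1}^n S_p(k_i m) \geq S_p(Km) + (p-1)\bigl(\ord_p(Km) - \min_i \ord_p(k_i m)\bigr),
\ee
and combining with $S_p(k_im)=S_p(k_iu)$, $S_p(Km)=S_p(Ku)$ plus a second application of Lemma \ref{lm:Additive2} to $a_i = k_i$ to bound $\sum_i S_p(k_i u) - S_p(Ku)$ from below... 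Actually the cleanest route: from Lemma \ref{lm:Additive2} applied to $k_1,\dots,k_n$ we get $\sum_i S_p(k_i) \geq S_p(K) + (p-1)(\ord_p(K) - \mu)$ where $\mu = \min_i\ord_p(k_i)$; and the difference $\sum_i S_p(k_i m) - S_p(Km)$ versus $\sum_i S_p(k_i) - S_p(K)$ — since $m = p^e u$ — requires tracking how multiplying by $u$ and shifting by $p^e$ affects digit sums, with the shift contributing exactly the needed $(p-1)e$ because appending $e$ zero digits (from the $p^e$) is digit-sum-neutral while the lower bound we carry forward must absorb it through the $\ord_p$ terms. I expect the main obstacle to be precisely this bookkeeping: arranging the applications of Lemma \ref{lm:Additive2} so that the $(p-1)\ord_p(m)$ term on the right-hand side is produced cleanly, rather than lost in a $\min$ that I cannot control. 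I would resolve it by applying Lemma \ref{lm:Additive2} to the tuple $(k_1 m, \dots, k_n m)$ and noting $\min_i \ord_p(k_i m) = e + \min_i \ord_p(k_i u) = e + 0 = e$ is false in general — so instead I use that each $k_i m$ is a multiple of $m$, giving $\ord_p(k_i m) \geq e$ and $\ord_p(Km)\geq e$, and then the inequality $\sum_i S_p(k_i m) - S_p(Km) \geq (p-1)e$ follows from Lemma \ref{lm:Additive2} together with the observation that $\ord_p(Km) \geq e$ while $\min_i\ord_p(k_im)$ may exceed $e$ but only makes the bound we need easier once combined with the digit-sum identity $S_p(k_i m) = S_p(k_i u)$ and a final application of Lemma \ref{lm:Additive2} to the $k_i u$.
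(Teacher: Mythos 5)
Your opening computation is correct and is exactly the paper's: writing $K=\sum_i k_i$,
\begin{equation*}
\ord_p\Bigl(\tfrac{1}{m}\tbinom{Km}{k_1m,\dots,k_nm}\Bigr)
=\frac{1}{p-1}\Bigl(\sum_{i=1}^n S_p(k_im)-S_p(Km)\Bigr)-\ord_p(m),
\end{equation*}
and Lemma \ref{lm:Additive2} applied to the integers $k_im$ gives $\sum_i S_p(k_im)-S_p(Km)\geq (p-1)\bigl(\ord_p(Km)-\min_i\ord_p(k_im)\bigr)\geq 0$, since the sum of the $k_im$ is divisible by $p^{\min_i\ord_p(k_im)}$. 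At that point you are done, because the hypothesis $(p,m)=1$ means $\ord_p(m)=0$: there is no ``$(p-1)\ord_p(m)$'' left to owe. This is precisely where your proposal goes off the rails. You write $m=p^eu$ and spend the entire second half trying to extract an extra $(p-1)e$ from the digit-sum inequality, apparently forgetting that the proposition assumes $e=0$. That effort not only is unnecessary but cannot succeed: without the hypothesis $(p,m)=1$ the statement is false (take $n=1$, $k_1=1$, $m=p$: then $\frac{1}{m}\binom{m}{m}=\frac{1}{p}\notin\bZ_p$), so no amount of bookkeeping with Lemma \ref{lm:Additive2} will produce the missing $(p-1)\ord_p(m)$. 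Your final paragraph accordingly never closes --- it ends with an admission that the intermediate claim ``is false in general'' and a gesture at a ``final application'' that is not carried out.

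The fix is simply to delete everything after your first application of Lemma \ref{lm:Additive2} and to invoke the hypothesis: $(p,m)=1\Rightarrow\ord_p(m)=0$, so the displayed $p$-adic order equals $\frac{1}{p-1}\bigl(\sum_iS_p(k_im)-S_p(Km)\bigr)\geq 0$. With that one observation your argument coincides with the paper's proof.
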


\begin{proof}
Because $(p, m) =1$,
we have $\ord_p(m) = 0$.
So we have
\ben
&& \ord_p\biggl( \frac{1}{m} \binom{\sum_{i=1}^n k_i }{k_1m, \dots, k_nm} \biggr)
= \ord_p \biggl( \frac{1}{m} \frac{(\sum_{i=1}^n k_im)!}{\prod_{i=1}^n (k_im)!} \biggr) \\
& = & \ord_p ((\sum_{i=1}^n k_i m)!) - \sum_{i=1}^n \ord_p ((k_i m)!)  - \ord_p (m) \\
& = & \frac{\sum_{i=1}^n k_i m-S_p(\sum_{i=1}^n k_im)}{p-1} -\sum_{i=1}^n \frac{k_im-S_p(k_im)}{p-1} \\
& = & \frac{1}{p-1} (\sum_{i=1}^n S_p(k_im)- S_p(\sum_{i=1}^n k_im)) \\
& \geq & \ord_p (\sum_{i=1}^n k_i m) - \min \{\ord_p(k_im)\}_{i=1}^n \geq 0.
\een
In the last inequality we have used Lemma \ref{lm:Additive2}.
\end{proof}

By modifying the proof slightly,
we also have

\begin{proposition} \label{prop:1.2}
For rational integers $m, k_1, \dots, k_n > 0$ such that $(p, m) = 1$ and $(p, k_1, \dots, k_n) = 1$,
we have
\be
\frac{1}{\sum_{i=1}^n k_i m} \binom{\sum_{i=1}^n k_i m}{k_1m, \dots, k_nm} \in \bZ_p.
\ee
\end{proposition}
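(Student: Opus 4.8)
The plan is to mimic the proof of Proposition \ref{prop:1.1}, adjusting only the term coming from the extra denominator. Since $(p,m)=1$ we again have $\ord_p(m)=0$, but now the denominator is $\sum_{i=1}^n k_i m$ rather than $m$, so $\ord_p(\sum_{i=1}^n k_i m) = \ord_p(\sum_{i=1}^n k_i)$. The hypothesis $(p,k_1,\dots,k_n)=1$ guarantees that not all $k_i$ are divisible by $p$, hence
\be
\min\{\ord_p(k_1 m),\dots,\ord_p(k_n m)\} = \min\{\ord_p(k_1),\dots,\ord_p(k_n)\} = 0,
\ee
using $\ord_p(k_i m) = \ord_p(k_i)$.

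First I would compute, exactly as before,
\ben
\ord_p\biggl( \frac{1}{\sum_{i=1}^n k_i m} \binom{\sum_{i=1}^n k_i m}{k_1m, \dots, k_nm} \biggr)
& = & \ord_p\Bigl((\textstyle\sum_i k_i m)!\Bigr) - \sum_{i=1}^n \ord_p\bigl((k_i m)!\bigr) - \ord_p\Bigl(\textstyle\sum_i k_i m\Bigr) \\
& = & \frac{1}{p-1}\Bigl( \sum_{i=1}^n S_p(k_i m) - S_p\bigl(\textstyle\sum_i k_i m\bigr) \Bigr) - \ord_p\Bigl(\textstyle\sum_i k_i m\Bigr).
\een
Then I would invoke Lemma \ref{lm:Additive2} applied to the integers $a_i = k_i m$: it gives
\be
\sum_{i=1}^n S_p(k_i m) \geq S_p\Bigl(\textstyle\sum_i k_i m\Bigr) + (p-1)\Bigl( \ord_p\bigl(\textstyle\sum_i k_i m\bigr) - \min_i \ord_p(k_i m) \Bigr),
\ee
and since $\min_i \ord_p(k_i m) = 0$ by the step above, the right-hand side of the $\ord_p$ computation is $\geq 0$, which is exactly the claim $\frac{1}{\sum_i k_i m}\binom{\sum_i k_i m}{k_1 m,\dots,k_n m}\in\bZ_p$.

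There is really no serious obstacle here; the only point requiring care — the ``modifying the proof slightly'' — is recognizing that the two hypotheses $(p,m)=1$ and $(p,k_1,\dots,k_n)=1$ play complementary roles: the first kills $\ord_p(m)$ and lets us replace $\ord_p(k_i m)$ by $\ord_p(k_i)$ throughout, while the second is what forces $\min_i \ord_p(k_i) = 0$ so that the $\ord_p(\sum_i k_i m)$ subtracted off in the formula is precisely cancelled by the correction term in Lemma \ref{lm:Additive2}. If one dropped $(p,k_1,\dots,k_n)=1$ the minimum could be positive and the estimate would fail, so both hypotheses are genuinely used. I would write the proof in three displayed lines as above, citing Lemma \ref{lm:Additive2} for the final inequality, parallel in structure to Propositions \ref{prop:1} and \ref{prop:1.1}.
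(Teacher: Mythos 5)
Your proof is correct and is precisely the ``slight modification'' of the proof of Proposition \ref{prop:1.1} that the paper alludes to without writing out: the same $\ord_p$ computation via $\ord_p(n!)=(n-S_p(n))/(p-1)$, followed by Lemma \ref{lm:Additive2} applied to $a_i=k_im$, with the two coprimality hypotheses combining to give $\min_i\ord_p(k_im)=0$ so that the subtracted $\ord_p(\sum_i k_im)$ is absorbed. Nothing further is needed.
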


Similarly,

\begin{proposition} \label{prop:1.3}
For rational integers $k_1, \dots, k_n > 0$ such that $(k_1, \dots, k_n) =1$,
we have
\be
\frac{1}{\sum_{i=1}^n k_i} \binom{\sum_{i=1}^n k_i m}{k_1m, \dots, k_nm} \in \bZ.
\ee
\end{proposition}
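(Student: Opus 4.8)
The plan is to deduce Proposition \ref{prop:1.3} from Proposition \ref{prop:1.2} by a local-to-global argument: a rational number lies in $\bZ$ if and only if it lies in $\bZ_p$ for every prime $p$, so it suffices to show $\frac{1}{\sum k_i} \binom{\sum k_i m}{k_1 m, \dots, k_n m} \in \bZ_p$ for all $p$. Write $N = \sum_{i=1}^n k_i$ and split according to whether $p \mid N$ is relevant. For the value $m$ we are no longer free to assume $(p,m)=1$, so the first step is to reduce to that case: set $m = p^r m'$ with $(p,m')=1$. Since $S_p(a_i p^r) = S_p(a_i)$ and $\ord_p(a_i p^r) = \ord_p(a_i) + r$ for any positive integer $a_i$ (the scaling identities recorded at the start of Section 2), replacing $k_i m$ by $p^r (k_i m')$ shifts every $\ord_p$ by the same amount $r$ and leaves every $S_p$ unchanged, so the $\ord_p$ of the quantity in question is unaffected. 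Hence without loss of generality $(p,m)=1$.

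Next, because $(k_1, \dots, k_n) = 1$ over $\bZ$, we certainly have $(p, k_1, \dots, k_n) = 1$ for the fixed prime $p$. Together with $(p,m) = 1$, Proposition \ref{prop:1.2} applies verbatim and gives $\frac{1}{\sum_{i=1}^n k_i m} \binom{\sum_{i=1}^n k_i m}{k_1 m, \dots, k_n m} \in \bZ_p$. It then remains to pass from $\frac{1}{Nm}$ to $\frac{1}{N}$, i.e. to check that multiplying by $m$ keeps us in $\bZ_p$; but $\ord_p(m) = 0$, so $m$ is a $p$-adic unit and multiplication by $m$ is an automorphism of $\bZ_p$. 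Therefore $\frac{1}{N}\binom{Nm}{k_1 m, \dots, k_n m} = m \cdot \frac{1}{Nm}\binom{Nm}{k_1 m, \dots, k_n m} \in \bZ_p$. Since $p$ was arbitrary, the quantity lies in $\bigcap_p \bZ_p \cap \bQ = \bZ$.

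The only step needing genuine care is the reduction $m \mapsto m' = m/p^{\ord_p(m)}$, and even there the point is routine once the scaling identities for $S_p$ and $\ord_p$ are invoked: one writes out $\ord_p\bigl(\frac{1}{Nm}\binom{Nm}{k_1 m,\dots,k_n m}\bigr)$ in terms of $\frac{1}{p-1}\bigl(\sum_i S_p(k_i m) - S_p(Nm)\bigr) - \ord_p(Nm)$ exactly as in the proof of Proposition \ref{prop:1.1}, substitutes $m = p^r m'$, and observes that the $r$'s cancel. I do not expect any serious obstacle; the proposition is essentially the assertion that the rational-integrality upgrade of Proposition \ref{prop:1.2} follows by the standard "integral iff $p$-integral for all $p$" principle, with the mild twist that one must first strip off the $p$-part of $m$.
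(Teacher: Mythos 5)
Your proof is correct, and it reaches the result by a mildly different route than the one the paper implies. The paper gives no written argument for Proposition \ref{prop:1.3} beyond the word ``Similarly'': the intended proof is to redo the computation of Propositions \ref{prop:1.1} and \ref{prop:1.2} directly for each prime $p$, writing $\ord_p\bigl(\frac{1}{N}\binom{Nm}{k_1m,\dots,k_nm}\bigr)=\frac{1}{p-1}\bigl(\sum_{i}S_p(k_im)-S_p(Nm)\bigr)-\ord_p(N)$ with $N=\sum_i k_i$ and applying Lemma \ref{lm:Additive2} to $a_i=k_im$; since $(k_1,\dots,k_n)=1$ forces $\min_i\ord_p(k_i)=0$, one gets $\ord_p(Nm)-\min_i\ord_p(k_im)=\ord_p(N)$, the two $\ord_p(m)$ terms cancel, and no case distinction on $p\mid m$ is needed. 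You instead first strip the $p$-part of $m$ using $S_p(p^ra)=S_p(a)$, then quote Proposition \ref{prop:1.2} as a black box and multiply by the $p$-adic unit $m$. Both arguments are valid and of comparable length; yours has the advantage of reusing Proposition \ref{prop:1.2} wholesale, at the cost of the extra normalization step.

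One small correction to your closing paragraph: the quantity $\ord_p\bigl(\frac{1}{Nm}\binom{Nm}{k_1m,\dots,k_nm}\bigr)=\frac{1}{p-1}\bigl(\sum_i S_p(k_im)-S_p(Nm)\bigr)-\ord_p(Nm)$ is \emph{not} invariant under $m\mapsto m'=m/p^{r}$: the $S_p$-terms are unchanged, but $-\ord_p(Nm)$ changes by $+r$, so ``the $r$'s cancel'' fails for that normalization. The invariance holds precisely for the quantity your main argument actually manipulates, namely $\frac{1}{N}\binom{Nm}{k_1m,\dots,k_nm}$, whose $\ord_p$ carries $-\ord_p(N)$ rather than $-\ord_p(Nm)$. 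Since your first two paragraphs work with the correct quantity throughout, the proof stands as written; only the summary sentence should be adjusted.
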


\begin{proposition}
For rational integers $k_1, \dots, k_n \geq 0$ such that $\sum_{i=1}^n k_i > 0$,
if all $k_i$'s are divisible by a prime $p$,
then the following inequality holds:
\bea
&& \ord_p \biggl( \frac{(\sum_{i=1}^n k_i)!}{(\sum_{i=1}^n k_i/p)!}
- \prod_{i=1}^n \frac{k_i!}{(k_i/p)!} \biggr)
\geq \sum_{i=1}^n k_i/p + \min \{\ord_p(k_1), \dots, \ord_p(k_n)\}.
\eea
\end{proposition}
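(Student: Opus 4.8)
The plan is to mimic the structure of equation \eqref{eqn:Remainder}, which already expresses $p^{-p^{r-1}a}\frac{(p^ra)!}{(p^{r-1}a)!}$ as a power of a product of units modulo $p^r$. First I would reduce to the case $\min\{\ord_p(k_i)\} = 1$, i.e. at least one $k_i$ is exactly divisible by $p$ but not by $p^2$; the general case follows by replacing each $k_i$ with $k_i/p^t$ where $t = \min\{\ord_p(k_i)\}$ and using the behaviour of $\ord_p$ under multiplication by $p^t$ (so the $+\min\{\ord_p(k_i)\}$ term in the statement is accounted for), together with the fact that the two ratios $\frac{(\sum k_i)!}{(\sum k_i/p)!}$ and $\prod \frac{k_i!}{(k_i/p)!}$ are unchanged in the relevant sense under this rescaling only up to a factor I would track carefully. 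Actually, the cleaner route is: write $k_i = p m_i$ with $m_1,\dots,m_n \geq 0$ not all zero, set $K = \sum k_i = p\sum m_i =: pM$, and observe $\frac{K!}{(K/p)!} = \frac{(pM)!}{M!}$ and $\frac{k_i!}{(k_i/p)!} = \frac{(pm_i)!}{m_i!}$.

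Now the key input is that, by \eqref{eqn:Order} with $r=1$, we have $\ord_p\bigl(\frac{(pM)!}{M!}\bigr) = M$ and $\ord_p\bigl(\frac{(pm_i)!}{m_i!}\bigr) = m_i$, so $\ord_p$ of each term in the difference equals $M = \sum m_i = \sum k_i/p$. Thus I can factor out $p^{M}$ and it suffices to show
\be
p^{-M}\frac{(pM)!}{M!} \equiv \prod_{i=1}^n p^{-m_i}\frac{(pm_i)!}{m_i!} \pmod{p}.
\ee
Here I would use \eqref{eqn:Remainder} in the case $r=1$: for any nonnegative integer $a$,
\be
p^{-a}\frac{(pa)!}{a!} \equiv \Bigl(\prod_{\substack{1 \leq j < p\\ (p,j)=1}} j\Bigr)^{a} = ((p-1)!)^{a} \pmod{p},
\ee
which is just a restatement of the multiplicativity $((p-1)!)^{M} = \prod_i ((p-1)!)^{m_i}$ once we use $M = \sum m_i$. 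This immediately gives the congruence modulo $p$, hence $\ord_p$ of the difference is at least $M + 1 = \sum k_i/p + 1$; but in the reduced case $\min\{\ord_p(k_i)\} = 1$ this is exactly the claimed bound $\sum k_i/p + \min\{\ord_p(k_i)\}$.

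The main obstacle I anticipate is not the $r=1$ congruence itself — which is essentially Wilson's theorem packaged as \eqref{eqn:Remainder} — but the careful bookkeeping in the general reduction: when $\min\{\ord_p(k_i)\} = t \geq 1$, some of the $k_i/p$ may still be divisible by $p$, so the product side $\prod\frac{(pm_i)!}{m_i!}$ is a product where individual factors can have $\ord_p$ strictly larger than $m_i$ is naive — but in fact \eqref{eqn:Order} shows $\ord_p\bigl(\frac{(pm_i)!}{m_i!}\bigr) = m_i$ regardless, so no difficulty arises there. The one genuinely delicate point is verifying that after factoring $p^M$ the reduced congruence really is only modulo $p$ (not a higher power), so that the conclusion gains exactly one unit of $\ord_p$ beyond $M$, matching $\min\{\ord_p(k_i)\}$ when that minimum is $1$; for $\min\{\ord_p(k_i)\} = t > 1$ one rescales by $p^{t-1}$ at the start, i.e. works with $k_i' = k_i/p^{t-1}$, applies the $t=1$ case to get $\ord_p$ of the $k_i'$-difference $\geq \sum k_i'/p + 1$, and then multiplies back through, using that multiplying all arguments of a factorial ratio $\frac{(N)!}{(N/p)!}$-type expression by $p^{t-1}$ raises its $\ord_p$ by a controlled amount — here I would simply note $\frac{(p^t M)!}{(p^{t-1}M)!} = p^{p^{t-1}M}\cdot(\text{unit}) $ by iterating \eqref{eqn:Order}, so the difference of the two sides picks up the extra $\ord_p$ cleanly. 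I would write the argument directly in the general case by this rescaling to avoid an induction.
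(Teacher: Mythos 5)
Your argument for the case $\min_i \ord_p(k_i)=1$ is correct, and it is exactly the paper's argument specialized to depth one: both $p^{-\sum k_i/p}\,\frac{(\sum k_i)!}{(\sum k_i/p)!}$ and $p^{-\sum k_i/p}\prod_i \frac{k_i!}{(k_i/p)!}$ are congruent to $((p-1)!)^{\sum k_i/p}$ modulo $p$ by \eqref{eqn:Remainder} with $r=1$. The gap is in the reduction of the general case $t=\min_i\ord_p(k_i)>1$ to this one by ``rescaling.'' Writing $k_i'=k_i/p^{t-1}$, the quantity $\frac{(\sum k_i)!}{(\sum k_i/p)!}-\prod_i\frac{k_i!}{(k_i/p)!}$ is not obtained from $\frac{(\sum k_i')!}{(\sum k_i'/p)!}-\prod_i\frac{k_i'!}{(k_i'/p)!}$ by multiplying through by a power of $p$ times a unit: the two differences involve entirely different factorial ratios, and a congruence between the primed quantities modulo $p^{\sum k_i'/p+1}$ transfers no information to the unprimed difference. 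Your stated justification --- that $\frac{(p^tM)!}{(p^{t-1}M)!}=p^{p^{t-1}M}\cdot(\text{unit})$, ``so the difference picks up the extra $\ord_p$ cleanly'' --- only says that each of the two terms separately has $\ord_p$ equal to $\sum_i k_i/p$; that yields the trivial bound $\ord_p(\text{difference})\geq\sum_i k_i/p$ and nothing more. To beat the trivial bound by $t$ you need the two unit parts to agree modulo $p^t$, which is strictly stronger than their agreement modulo $p$ and cannot be extracted from the mod-$p$ statement by rescaling.

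The paper gets the stronger congruence directly by invoking \eqref{eqn:Remainder} at depth $r=t$ rather than $r=1$: since $p^t\mid k_i$ for every $i$ and hence $p^t\mid\sum_i k_i$, both $p^{-\sum k_i/p}\,\frac{(\sum k_i)!}{(\sum k_i/p)!}$ and $p^{-\sum k_i/p}\prod_i\frac{k_i!}{(k_i/p)!}$ are congruent modulo $p^t$ to $\bigl(\prod_{1\leq j<p^t,\,(j,p)=1}j\bigr)^{\sum_i k_i/p^t}$, whence the claim. If you wanted to keep your two-step structure you would have to prove this depth-$t$ congruence anyway, at which point the reduction to $t=1$ buys nothing; the fix is simply to apply \eqref{eqn:Remainder} with $r=\min_i\ord_p(k_i)$ from the start.
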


\begin{proof}
First by \eqref{eqn:Order} we have
\ben
&& \ord_p \biggl(\frac{(\sum_{i=1}^n k_i)!}{(\sum_{i=1}^n k_i/p)!}   \biggr)
= \sum_{i=1}^n k_i/p, \\
&& \ord_p \biggl(  \prod_{i=1}^n \frac{k_i!}{(k_i/p)!} \biggr)
= \sum_{i=1}^n k_i/p.
\een
Secondly,
assume $\ord_p(k_1) \geq \cdots \geq \ord_p(k_n) = r$.
By \eqref{eqn:Remainder} we have
\ben
&& p^{-\sum_{i=1}^n k_i/p} \frac{(\sum_{i=1}^n k_i)!}{(\sum_{i=1}^n k_i/p)!}
\equiv (\prod_{1 \leq j \leq p^r} j)^{\sum_{i=1}^n k_i/p^r} \pmod{p^r}, \\
&& p^{-\sum_{i=1}^n k_i/p} \prod_{i=1}^n \frac{(k_i)!}{(k_i/p)!}
\equiv (\prod_{1 \leq j \leq p^r} j)^{\sum_{i=1}^n k_i/p^r} \pmod{p^r}.
\een
Therefore,
the proof is complete.
\end{proof}

\begin{proposition} \label{prop:3}
For rational integers $k_1, \dots, k_n \geq 0$ such that $\sum_{i=1}^n k_i > 0$,
if all $k_i$'s are divisible by a prime $p$,
then  we have
\be
\frac{1}{\sum_{i=1}^n k_i} \biggl( \binom{\sum_{i=1}^n k_i}{k_1, \dots, k_n}
-  \binom{\sum_{i=1}^n k_i/p}{k_1/p, \dots, k_n/p} \biggr) \in \bZ_p.
\ee
\end{proposition}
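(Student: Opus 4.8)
The plan is to derive everything from the estimate furnished by the Proposition immediately preceding this one, together with the additive bound of Lemma \ref{lm:Additive2}. First I would note that any index $i$ with $k_i=0$ may be dropped, since it affects neither multinomial coefficient nor $\sum_i k_i$; so we may assume $k_i>0$ for all $i$. Write $k_i=p\ell_i$ and $K:=\sum_{i=1}^n k_i=pL$ with $L:=\sum_{i=1}^n\ell_i>0$, and set
\[
A=\binom{pL}{p\ell_1,\dots,p\ell_n}=\frac{(pL)!}{\prod_i (p\ell_i)!},\qquad
B=\binom{L}{\ell_1,\dots,\ell_n}=\frac{L!}{\prod_i \ell_i!},
\]
\[
u=\frac{(pL)!}{L!},\qquad v=\prod_{i=1}^n\frac{(p\ell_i)!}{\ell_i!}.
\]
A direct cancellation of factorials shows $A=(u/v)\,B$, whence $A-B=B\,(u-v)/v$; if $u=v$ then $A=B$ and there is nothing to prove, so assume $u\neq v$.

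By \eqref{eqn:Order} we have $\ord_p(u)=L$ and $\ord_p(v)=\sum_i\ell_i=L$. Applying the preceding Proposition to $k_1,\dots,k_n$ gives
\[
\ord_p(u-v)\ \geq\ \sum_{i=1}^n\frac{k_i}{p}+\min_i\ord_p(k_i)\ =\ L+1+\min_i\ord_p(\ell_i),
\]
so that, since $\ord_p$ is a valuation,
\[
\ord_p(A-B)=\ord_p(B)+\ord_p(u-v)-\ord_p(v)\ \geq\ \ord_p(B)+1+\min_i\ord_p(\ell_i).
\]
Because $\ord_p(K)=\ord_p(pL)=1+\ord_p(L)$, in order to conclude that $\tfrac1K(A-B)\in\bZ_p$ it suffices to establish the auxiliary inequality $\ord_p(B)+\min_i\ord_p(\ell_i)\geq\ord_p(L)$.

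For this last point I would use $\ord_p(m!)=(m-S_p(m))/(p-1)$ and $\sum_i\ell_i=L$ to compute
\[
\ord_p(B)=\ord_p(L!)-\sum_{i=1}^n\ord_p(\ell_i!)=\frac{1}{p-1}\Bigl(\sum_{i=1}^n S_p(\ell_i)-S_p(L)\Bigr),
\]
and then invoke Lemma \ref{lm:Additive2} with $a_i=\ell_i$, which asserts precisely $\sum_i S_p(\ell_i)\geq S_p(L)+(p-1)(\ord_p(L)-\min_i\ord_p(\ell_i))$; dividing by $p-1$ yields $\ord_p(B)\geq\ord_p(L)-\min_i\ord_p(\ell_i)$, as needed. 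Assembling the estimates gives $\ord_p(A-B)\geq 1+\ord_p(L)=\ord_p(K)$, which is the assertion. The only genuine input is the preceding Proposition; the single subtlety to watch is that the term $\min_i\ord_p(\ell_i)$ appearing in that estimate must be exactly what compensates for the extra factor $p$ in the denominator $K=pL$, and it is Lemma \ref{lm:Additive2} that guarantees this.
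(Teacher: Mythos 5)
Your argument is correct and follows essentially the same route as the paper: you factor the difference of multinomial coefficients as a common factorial ratio times $\bigl(\tfrac{(\sum k_i)!}{(\sum k_i/p)!}-\prod_i\tfrac{k_i!}{(k_i/p)!}\bigr)$, bound that bracket by the immediately preceding Proposition, and close the gap with $\ord_p(m!)=(m-S_p(m))/(p-1)$ and Lemma \ref{lm:Additive2}. Your rewriting in terms of $\ell_i=k_i/p$ and the isolated auxiliary inequality $\ord_p(B)\geq\ord_p(L)-\min_i\ord_p(\ell_i)$ are just a cleaner bookkeeping of the same computation.
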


\begin{proof}
We have
\ben
&& \ord_p \biggl( \frac{1}{\sum_{i=1}^n k_i} \biggl( \binom{\sum_{i=1}^n k_i}{k_1,\dots, k_n}
-  \binom{\sum_{i=1}^n k_i/p}{k_1/p, \dots, k_n/p} \biggr)  \biggr)\\
& = & \ord_p \biggl(\frac{1}{\sum_{i=1}^n k_i} \frac{((\sum_{i=1}^n k_i/p)!}{\prod_{i=1}^n k_i!}
 \biggl( \frac{(\sum_{i=1}^n k_i)!}{(\sum_{i=1}^n k_i/p)!} - \prod_{i=1}^n \frac{k_i!}{(k_i/p)!} \biggr) \biggr) \\
& \geq & \sum_{i=1}^n k_i/p + \min \{\ord_p(k_1), \dots, \ord_p(k_n)\} \\
&& + \frac{\sum_{i=1}^n k_i/p-S_p(\sum_{i=1}^n k_i/p)}{p-1}
- \sum_{i=1}^n \frac{k_i-S_p(k_i)}{p-1}- \ord_p (\sum_{i=1}^n k_i) \\
& = & \frac{1}{p-1} (\sum_{i=1}^n S_p(k_i)- S_p(\sum_{i=1}^n k_i)) \\
&& - (\ord_p(\sum_{i=1}^n k_i) - + \min \{\ord_p(k_1), \dots, \ord_p(k_n)\}) \geq 0.
\een
In the last equality we have used Lemma \ref{lm:Additive2}.
\end{proof}

As a special case,
we have the following

\begin{proposition} \label{prop:3.1}
For rational integers $m, k_1, \dots, k_n > 0$ such that $p|m$,
we have
\be
\frac{1}{\sum_{i=1}^n k_im} \biggl( \binom{\sum_{i=1}^n k_im}{k_1m, \dots, k_nm}
-  \binom{\sum_{i=1}^n k_im/p}{k_1m/p, \dots, k_nm/p} \biggr) \in \bZ_p.
\ee
\end{proposition}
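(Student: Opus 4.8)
The plan is to obtain Proposition \ref{prop:3.1} as a direct specialization of Proposition \ref{prop:3}. Indeed, in Proposition \ref{prop:3} set the nonnegative integers to be $k_i m$ in place of $k_i$, where $m, k_1, \dots, k_n > 0$. Since $p \mid m$, every $k_i m$ is divisible by $p$, so the hypothesis of Proposition \ref{prop:3} is satisfied, and $(k_i m)/p = k_i (m/p)$. Also $\sum_{i=1}^n k_i m > 0$. Hence Proposition \ref{prop:3} gives exactly
\be
\frac{1}{\sum_{i=1}^n k_i m} \biggl( \binom{\sum_{i=1}^n k_i m}{k_1 m, \dots, k_n m}
- \binom{\sum_{i=1}^n k_i m/p}{k_1 m/p, \dots, k_n m/p} \biggr) \in \bZ_p,
\ee
which is the claim.

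Alternatively, if one prefers a self-contained argument that does not merely invoke the previous proposition, one would repeat the $\ord_p$ estimate directly. First I would factor the difference of multinomial coefficients as
\be
\frac{1}{\sum_i k_i m} \cdot \frac{(\sum_i k_i m/p)!}{\prod_i (k_i m)!}
\biggl( \frac{(\sum_i k_i m)!}{(\sum_i k_i m/p)!} - \prod_i \frac{(k_i m)!}{(k_i m/p)!} \biggr),
\ee
exactly as in the proof of Proposition \ref{prop:3}. Then I would apply the preceding proposition (the one bounding $\ord_p$ of the bracketed difference) with the $k_i$ replaced by $k_i m$, together with the formula $\ord_p(N!) = (N - S_p(N))/(p-1)$ for the remaining factorials, and Lemma \ref{lm:Additive2} applied to $k_1 m, \dots, k_n m$ to control $\sum_i S_p(k_i m) - S_p(\sum_i k_i m)$ against $\ord_p(\sum_i k_i m) - \min_i \ord_p(k_i m)$. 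Collecting terms, the $\min_i \ord_p(k_i m)$ contributions cancel and one is left with a manifestly nonnegative quantity, so the $p$-adic order is $\geq 0$.

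The computation here is entirely routine; there is essentially no obstacle, since this is flagged in the text as ``a special case'' of Proposition \ref{prop:3}. The only point requiring a moment's care is the bookkeeping that $\min_i \ord_p(k_i m) = \min_i \ord_p(k_i) + \ord_p(m)$ and that this same minimum appears with opposite sign from the two places it enters (once from the bracketed-difference estimate, once from Lemma \ref{lm:Additive2}), so that it drops out and the final bound does not depend on it. I would therefore present the proof in one or two lines by simply citing Proposition \ref{prop:3} with the substitution $k_i \mapsto k_i m$.
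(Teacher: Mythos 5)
Your proposal is correct and matches the paper exactly: the paper states Proposition \ref{prop:3.1} as ``a special case'' of Proposition \ref{prop:3} with no separate proof, which is precisely your substitution $k_i \mapsto k_i m$ (valid since $p \mid m$ forces $p \mid k_i m$ for all $i$ and $\sum_i k_i m > 0$).
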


\subsection{Congruence properties of $\frac{1}{m!}\binom{mn}{n, \dots, n}$}

\begin{proposition} \label{prop:4}
For any positive integers $m$ and $n$,
we have
\be
\frac{1}{m!} \binom{mn}{n, \dots, n} \in \bZ_p.
\ee
Furthermore,
when $S_p(n) > 1$,
\be
\frac{1}{m \cdot m!} \binom{mn}{n, \dots, n} \in \bZ_p.
\ee
\end{proposition}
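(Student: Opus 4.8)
The plan is to compute the $p$-adic order of the relevant rational number directly from the Legendre-type formula $\ord_p(k!) = \frac{k - S_p(k)}{p-1}$ and then feed the result into the Corollary to Lemma \ref{lm:Multiplicative}. First I would write
\[
\frac{1}{m!} \binom{mn}{n, \dots, n} = \frac{(mn)!}{m!\,(n!)^m},
\]
and compute
\[
\ord_p\!\left(\frac{(mn)!}{m!\,(n!)^m}\right) = \frac{mn - S_p(mn)}{p-1} - \frac{m - S_p(m)}{p-1} - m\cdot\frac{n - S_p(n)}{p-1} = \frac{m S_p(n) - S_p(mn) + S_p(m) - m}{p-1}.
\]
Then I would invoke the Corollary to Lemma \ref{lm:Multiplicative}, which states precisely that $m S_p(n) - S_p(mn) + S_p(m) - m \geq (m - S_p(m))(S_p(n) - 1)$; since $m \geq S_p(m)$ and $S_p(n) \geq 1$, this lower bound is $\geq 0$, so the expression lies in $\bZ_p$. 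That gives the first assertion.

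For the refined statement, assume $S_p(n) > 1$. The same Corollary then gives the stronger bound $m S_p(n) - S_p(mn) + S_p(m) - m \geq m - S_p(m)$, hence $\ord_p\big(\frac{(mn)!}{m!\,(n!)^m}\big) \geq \frac{m - S_p(m)}{p-1}$. Now $\frac{m - S_p(m)}{p-1} = \ord_p(m!) \geq \ord_p(m)$ (equivalently, $m - S_p(m) \geq (p-1)\ord_p(m)$ by the first Lemma of Section~2), so $\ord_p\big(\frac{(mn)!}{m \cdot m!\,(n!)^m}\big) \geq 0$, which is the claim.

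There is no serious obstacle here: the entire content has already been packaged into the inequalities of Sections~2 and~3, above all $S_p(mn) \leq S_p(m) S_p(n)$. The one point that calls for a bit of attention is the second part, where one must notice that the surplus $\frac{m - S_p(m)}{p-1}$ extracted from the case $S_p(n) > 1$ of the Corollary is exactly $\ord_p(m!)$, hence always at least $\ord_p(m)$, and is therefore precisely large enough to absorb the extra factor $m$ in the denominator. It is also worth recording the edge case $n = p^r$ (so $S_p(n) = 1$), for which only the first assertion is available and is indeed sharp: e.g. for $n = 1$ one has $\binom{m}{1,\dots,1}/m! = 1$, which is not divisible by $m$.
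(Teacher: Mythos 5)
Your proposal is correct and follows essentially the same route as the paper: compute $\ord_p$ via $\ord_p(k!)=\frac{k-S_p(k)}{p-1}$, bound it below by $\frac{1}{p-1}(m-S_p(m))(S_p(n)-1)$ using $S_p(mn)\le S_p(m)S_p(n)$, and for the second assertion absorb $\ord_p(m)$ using $m-S_p(m)\ge(p-1)\ord_p(m)$. The remark on sharpness at $n=p^r$ is a nice extra but not needed.
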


\begin{proof}
We have
\ben
&& \ord_p\biggl(\frac{1}{m!} \binom{mn}{n, \dots, n} \biggr)
= \ord_p \biggl( \frac{(mn)!}{m! (n!)^m} \biggr) \\
& = & \ord_p ((mn)!) - \ord_p (m!) - m \ord_p (n!) \\
& = & \frac{mn-S_p(mn)}{p-1} - \frac{m-S_p(m)}{p-1} - m \cdot \frac{n-S_p(n)}{p-1} \\
& = & \frac{1}{p-1} (m S_p(n) - S_p(mn) + S_p(m) - m) \\
& \geq & \frac{1}{p-1} (m- S_p(m)) \cdot (S_p(n) -1).
\een
When $S_p(n) > 1$,
\ben
&& \ord_p\biggl(\frac{1}{m \cdot m!} \binom{mn}{n, \dots, n} \biggr)
 \geq \frac{1}{p-1}(m -S_p(m)) - \ord_p(m) \geq 0.
\een
\end{proof}

\begin{proposition}
Let $m$ and $n$ be two positive numbers,
$p$ a prime,
$n = p^r a$, $(p, a)=1$, $r \geq 1$.
Then the following inequality holds:
\be
\ord_p \biggl( \frac{(mp^r a)!}{(mp^{r-1} a)!}  - \biggl(\frac{(p^ra)!}{(p^{r-1}a)!} \biggr)^m \biggr)
\geq mp^{r-1}a + r.
\ee
\end{proposition}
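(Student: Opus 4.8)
The quickest route is to observe that this inequality is nothing but the special case of the (unnumbered) Proposition preceding Proposition \ref{prop:3.1} in which one takes $m$ equal arguments $k_1=\cdots=k_m=n=p^ra$: then $\sum_{i=1}^m k_i=mp^ra$, $\sum_{i=1}^m k_i/p=mp^{r-1}a$, each $k_i/p=p^{r-1}a$, and $\min\{\ord_p(k_i)\}=\ord_p(n)=r$, so that Proposition's conclusion reads exactly
\[
\ord_p\Bigl(\tfrac{(mp^ra)!}{(mp^{r-1}a)!}-\bigl(\tfrac{(p^ra)!}{(p^{r-1}a)!}\bigr)^m\Bigr)\ \geq\ mp^{r-1}a+r .
\]
If instead a self-contained argument is wanted, I would run the same two-step scheme directly. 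First, apply \eqref{eqn:Order} with $ma$ in place of $a$ to get $\ord_p\bigl(\tfrac{(mp^ra)!}{(mp^{r-1}a)!}\bigr)=mp^{r-1}a$, and apply \eqref{eqn:Order} with $a$ together with multiplicativity of $\ord_p$ to get $\ord_p\bigl(\bigl(\tfrac{(p^ra)!}{(p^{r-1}a)!}\bigr)^m\bigr)=mp^{r-1}a$ as well. Thus both terms in the difference have the same $p$-adic order $mp^{r-1}a$, and it remains to show that after dividing by $p^{mp^{r-1}a}$ they become congruent mod $p^r$.

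For that, I would invoke \eqref{eqn:Remainder}: with $ma$ in place of $a$ it gives
\[
p^{-mp^{r-1}a}\,\frac{(mp^ra)!}{(mp^{r-1}a)!}\equiv\Bigl(\prod_{\substack{1\le j<p^r\\(p,j)=1}}j\Bigr)^{ma}\pmod{p^r},
\]
while \eqref{eqn:Remainder} with $a$, raised to the $m$-th power, gives
\[
p^{-mp^{r-1}a}\,\Bigl(\frac{(p^ra)!}{(p^{r-1}a)!}\Bigr)^m\equiv\Bigl(\prod_{\substack{1\le j<p^r\\(p,j)=1}}j\Bigr)^{ma}\pmod{p^r}.
\]
Subtracting, $p^{-mp^{r-1}a}$ times the difference in the statement is $\equiv 0\pmod{p^r}$, which is precisely the claimed bound $\ord_p(\cdots)\ge mp^{r-1}a+r$.

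\textbf{Where the care is needed.} The only subtlety is that $m$ need not be prime to $p$, so the multiplier $ma$ appearing above may be divisible by $p$; one must check that \eqref{eqn:Order} and \eqref{eqn:Remainder} are valid for an arbitrary positive integer in the role of ``$a$'', not merely one coprime to $p$. For \eqref{eqn:Order} this is immediate since $S_p(p^\ell c)=S_p(c)$ for every $c$, so $\ord_p\bigl(\tfrac{(p^rc)!}{(p^{r-1}c)!}\bigr)=\tfrac{p^rc-p^{r-1}c}{p-1}=p^{r-1}c$ regardless. For \eqref{eqn:Remainder} one writes $\tfrac{(p^rc)!}{(p^{r-1}c)!}=p^{p^{r-1}c}\prod_{1\le j\le p^rc,\ p\nmid j}j$ (the multiples of $p$ up to $p^rc$ contribute exactly $p^{p^{r-1}c}(p^{r-1}c)!$), and then groups the surviving product into $c$ blocks of length $p^r$; each block is $\equiv\prod_{1\le j<p^r,\ (p,j)=1}j\pmod{p^r}$ because only $p\nmid j$ is used, never coprimality of the block index. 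With these two observations in hand the argument above goes through verbatim, so this is really the whole difficulty.
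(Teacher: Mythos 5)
Your proposal is correct and your self-contained argument is essentially identical to the paper's own proof: equality of the two $p$-adic orders via \eqref{eqn:Order}, followed by the congruence mod $p^r$ of the normalized quantities via \eqref{eqn:Remainder} applied to $ma$ and to $a$. Your alternative reduction to the preceding proposition (with $k_1=\cdots=k_m=p^ra$) also works, and your remark that \eqref{eqn:Order} and \eqref{eqn:Remainder} must hold for arbitrary positive integers in the role of $a$ is already accommodated by the paper, which states both formulas for a general rational integer $a$.
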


\begin{proof}
By \eqref{eqn:Order} we have
\ben
&& \ord_p \biggl( \frac{(mp^r a)!}{(mp^{r-1} a)!}  \biggr)
= mp^{r-1}a, \\
&& \ord_p \biggl(\frac{(p^ra)!}{(p^{r-1}a)!} \biggr)^m
= mp^{r-1}a.
\een
By \eqref{eqn:Remainder},
we have
\ben
&& p^{-p^{r-1}a} \frac{(p^ra)!}{(p^{r-1}a)!}
\equiv (\prod_{\substack{0 < j < p^r \\ (j,p) = 1}} j)^a \pmod{p^r}, \\
&& p^{-mp^{r-1}a}  \frac{(mp^r a)!}{(mp^{r-1} a)!}
\equiv (\prod_{\substack{0 < j < p^r \\ (j,p) = 1}} j)^{ma} \pmod{p^r}.
\een
Therefore the proof is complete.
\end{proof}

Base on some numerical evidence obtained by Maple calculations,
we make the following:

\begin{Conjecture}
Let $p$ be a prime, and let $a$ be a positive integer such that $(p, a) = 1$,
$r \geq 1$ an integer.
Then we have
\be
\ord_p \biggl( \frac{(mp^r a)!}{(mp^{r-1} a)!}  - \biggl(\frac{(p^ra)!}{(p^{r-1}a)!} \biggr)^m \biggr)
= \begin{cases}
mp^{r-1}a + 3r - 1, & p =3, \\
 mp^{r-1}a + 3r, & p > 3.
\end{cases}
\ee

\end{Conjecture}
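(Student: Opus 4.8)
The plan is to reduce the statement to the Jacobsthal--Kazandzidis congruence for binomial coefficients. First, the multinomial identity $\binom{mc}{c,\dots,c}=(mc)!/(c!)^{m}$ ($m$ parts, all equal to $c$) gives
\[
\frac{(mp^{r}a)!/(mp^{r-1}a)!}{\bigl((p^{r}a)!/(p^{r-1}a)!\bigr)^{m}}
=\frac{\binom{mp^{r}a}{p^{r}a,\dots,p^{r}a}}{\binom{mp^{r-1}a}{p^{r-1}a,\dots,p^{r-1}a}}=:R_{m},
\]
so the difference in the conjecture equals $\bigl((p^{r}a)!/(p^{r-1}a)!\bigr)^{m}(R_{m}-1)$; since $\ord_{p}\bigl((p^{r}a)!/(p^{r-1}a)!\bigr)=p^{r-1}a$ by \eqref{eqn:Order}, the claim amounts to $\ord_{p}(R_{m}-1)=3r$ for $p>3$ and $3r-1$ for $p=3$.

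I would then telescope $R_{m}$ into single binomial coefficients: with $A:=p^{r-1}a$ and $\binom{mc}{c,\dots,c}=\prod_{j=1}^{m-1}\binom{(j+1)c}{c}$,
\[
R_{m}=\prod_{j=1}^{m-1}\frac{\binom{(j+1)pA}{pA}}{\binom{(j+1)A}{A}}=\prod_{j=1}^{m-1}(1+\eta_{j}),
\]
each factor being a $p$-adic unit by Kummer's theorem. The Jacobsthal--Kazandzidis congruence states that for $p\ge5$
\[
\binom{pn}{pk}\equiv\binom nk\Bigl(1-\tfrac13\,p^{3}\,nk(n-k)\,B_{p-3}\Bigr)\pmod{p^{4}\,nk(n-k)\,\tbinom nk\,\bZ_{p}}
\]
(the case $p=3$ is analogous, with $B_{p-3}=B_{0}=1$). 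Applying it with $n=(j+1)A$, $k=A$, so that $nk(n-k)=j(j+1)p^{3(r-1)}a^{3}$, and noting $\ord_{p}(\tfrac13 p^{3})=3$ for $p>3$ but $2$ for $p=3$, one gets that $\eta_{j}$ is congruent to its leading term $-\tfrac13 p^{3r}j(j+1)a^{3}B_{p-3}$ modulo a strictly higher power of $p$; hence $\ord_{p}(\eta_{j})=3r+\ord_{p}(j(j+1))+\ord_{p}(B_{p-3})$ for $p>3$, and $\ord_{3}(\eta_{j})=3r-1+\ord_{3}(j(j+1))$ for $p=3$.

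To assemble the product, the cross terms of $\prod_{j}(1+\eta_{j})-1$ are of strictly higher $p$-adic order, so $R_{m}-1\equiv\sum_{j=1}^{m-1}\eta_{j}$ to leading order; using $\sum_{j=1}^{m-1}j(j+1)=\tfrac13(m^{3}-m)$, this gives $\ord_{p}(R_{m}-1)=3r+\ord_{p}(m^{3}-m)$ for $p>3$ with $p\nmid B_{p-3}$, and $\ord_{3}(R_{m}-1)=3r-2+\ord_{3}(m^{3}-m)$ for $p=3$. Translating back through the first step, the valuation of the original difference is $mp^{r-1}a+3r+\ord_{p}(m^{3}-m)$ when $p>3$, and $mp^{r-1}a+3r-2+\ord_{3}(m^{3}-m)$ when $p=3$; since $3\mid m^{3}-m$ always, the latter has minimum $mp^{r-1}a+3r-1$ over all $m$, which is the value asserted in the conjecture.

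The crux is twofold. First, the conjectured equality is \emph{not} uniform in $m$: it holds precisely when $p\nmid m(m^{2}-1)$ for $p>3$, respectively $9\nmid m(m^{2}-1)$ for $p=3$; for $m\equiv0,\pm1$ in the relevant modulus the true valuation is larger by $\ord_{p}(m^{3}-m)$ --- for instance, $p=5$, $a=r=1$, $m=4$ gives $\ord_{5}=8$, not the predicted $7$. Hence a provable statement should either carry the correction $+\ord_{p}(m^{3}-m)$ or restrict to $\gcd(m(m^{2}-1),p)=1$. Second, the genuine difficulty is the \emph{sharpness} of the Kazandzidis congruence --- that its leading term does not vanish modulo the next power --- which for $p>3$ requires $p\nmid B_{p-3}$, i.e.\ that $p$ is not a Wolstenholme prime. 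Granting Kazandzidis (one may cite it, or re-derive it from $\prod_{1\le l\le pM,\,p\nmid l}l=(-1)^{M}\Gamma_{p}(pM)$ together with the Taylor expansion of $\log\Gamma_{p}$ at $0$), what remains is the elementary bookkeeping that each term neglected above has strictly larger valuation, uniformly in $j$ and $m$.
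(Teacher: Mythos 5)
You should first note that the paper offers \emph{no proof} of this statement: it is explicitly labelled a Conjecture and is supported only by ``some numerical evidence obtained by Maple calculations,'' so there is no argument of the author's to compare yours against. What your analysis actually shows is that the conjecture is \emph{false as stated}, and your counterexample checks out by direct computation: for $p=5$, $a=r=1$, $m=4$ one has
$\frac{20!}{4!}-120^4 = 101370916800000000 = 2^8\cdot 5^8\cdot 1013709168$ with $5\nmid 1013709168$, so the valuation is $8$, not the predicted $mp^{r-1}a+3r=7$. Equivalently, $R_4-1=488864375$ has $\ord_5=4$ rather than $3$, the extra factor of $5$ coming exactly from $\ord_5\bigl(\sum_{j=1}^{3}j(j+1)\bigr)=\ord_5(20)=1$, i.e.\ from $\ord_p(m^3-m)$ as your Kazandzidis computation predicts. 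The reduction to $R_m=\prod_{j=1}^{m-1}\binom{(j+1)pA}{pA}/\binom{(j+1)A}{A}$ and the extraction of the leading term $-\tfrac{1}{3}B_{p-3}\,p^{3r}j(j+1)a^3$ are sound, and your identification of the two obstructions (the $m$-dependence through $m^3-m$, and the Wolstenholme condition $p\nmid B_{p-3}$) is exactly right; the corrected assertion should read $mp^{r-1}a+3r+\ord_p(m^3-m)+\ord_p(B_{p-3})$ for $p>3$, with the stated value holding only in the generic case.

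One caveat on the positive direction of your argument: when $\ord_p(m^3-m)\ge 1$, the claimed \emph{equality} $\ord_p(R_m-1)=3r+\ord_p(m^3-m)$ is not yet justified, because the error in each factor, $\eta_j+\tfrac{1}{3}B_{p-3}p^3\,nk(n-k)=O\bigl(p^4nk(n-k)\bigr)$, contributes terms of valuation as low as $3r+1$, which can interfere with a leading term of valuation $3r+\ord_p(m^3-m)\ge 3r+1$. This does not affect the refutation (for that, the lower bound $\ord_p(R_m-1)>3r$ when $p\mid m^3-m$ suffices, and that part is airtight), but a clean positive statement would need either the next term of the Kazandzidis expansion or a restriction such as $\ord_p(m^3-m)=0$. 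You should also be explicit about which form of the congruence you invoke at $p=3$, since the standard Jacobsthal--Kazandzidis statement there is weaker by a factor of $p$, which is precisely where the ``$3r-1$'' in the conjecture comes from. As it stands, your work is best presented not as a proof of the conjecture but as a disproof together with a proposed corrected formula.
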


\begin{proposition} \label{prop:6}
For any positive integers $m$ and $n$,
if $n$ is divisible by a prime $p$,
then we have
\be
\frac{1}{m!\cdot n} \biggl( \binom{m n}{n, \dots, n}
-  \binom{m n/p}{n/p, \dots, n/p} \biggr) \in \bZ_p.
\ee
\end{proposition}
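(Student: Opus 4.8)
The plan is to follow the proof of Proposition \ref{prop:3.1} almost verbatim, carrying along the extra factor $\tfrac{1}{m!}$ and using, in place of Proposition \ref{prop:3}'s input, the factorial-ratio estimate proved in the proposition immediately preceding the Conjecture. Write $n = p^r a$ with $(p,a)=1$ and $r = \ord_p(n) \geq 1$, and abbreviate $A = \frac{(mn)!}{(mn/p)!} = \frac{(mp^r a)!}{(mp^{r-1}a)!}$ and $B = \frac{n!}{(n/p)!} = \frac{(p^r a)!}{(p^{r-1}a)!}$. Since $n! = B\cdot (n/p)!$ and $(mn)! = A\cdot (mn/p)!$, a direct manipulation of $\binom{mn}{n,\dots,n} = \frac{(mn)!}{(n!)^m}$ and $\binom{mn/p}{n/p,\dots,n/p} = \frac{(mn/p)!}{((n/p)!)^m}$ gives the factorization
\be
\binom{mn}{n,\dots,n} - \binom{mn/p}{n/p,\dots,n/p}
= \frac{(mn/p)!}{(n!)^m}\,(A - B^m),
\ee
so that $\frac{1}{m!\cdot n}\bigl(\binom{mn}{n,\dots,n} - \binom{mn/p}{n/p,\dots,n/p}\bigr) = \frac{(mn/p)!}{m!\cdot n\cdot (n!)^m}\,(A-B^m)$.

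Next I would estimate the two factors separately. For the factorial ratio, the proposition just before the Conjecture gives $\ord_p(A - B^m) \geq mp^{r-1}a + r = \frac{mn}{p} + \ord_p(n)$. For the prefactor, the formula $\ord_p(k!) = \frac{k - S_p(k)}{p-1}$ from Section 2 gives
\be
\ord_p\!\left(\frac{(mn/p)!}{m!\cdot n\cdot (n!)^m}\right)
= \frac{mn/p - S_p(mn/p)}{p-1} - \frac{m - S_p(m)}{p-1} - \ord_p(n) - m\cdot\frac{n - S_p(n)}{p-1}.
\ee
Adding the two estimates, the $\pm\,\ord_p(n)$ terms cancel; writing $\frac{mn}{p} = \frac{mn - mn/p}{p-1}$ to combine the powers of $p$, and using $S_p(mn/p) = S_p(mn)$ (valid since $p \mid mn$, because $S_p(pk) = S_p(k)$), everything collapses to
\be
\ord_p\!\left(\frac{1}{m!\cdot n}\Bigl(\binom{mn}{n,\dots,n} - \binom{mn/p}{n/p,\dots,n/p}\Bigr)\right)
\geq \frac{1}{p-1}\bigl(mS_p(n) - S_p(mn) + S_p(m) - m\bigr).
\ee

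Finally I would invoke the Corollary following Lemma \ref{lm:Multiplicative}, which states exactly that $mS_p(n) - S_p(mn) + S_p(m) - m \geq (m - S_p(m))(S_p(n) - 1)$, and this is $\geq 0$ since $m \geq S_p(m)$ and $S_p(n) \geq 1$. Hence the $p$-adic order above is nonnegative for every prime $p$, which is the claim. The computation is essentially routine once the factorization and the right input estimate are in place; the only step demanding a little care is the bookkeeping in the cancellation — in particular, recognizing that after the powers of $p$ are combined the leftover expression is precisely the quantity $mS_p(n) - S_p(mn) + S_p(m) - m$ already controlled in the proof of Proposition \ref{prop:4}, so that no new inequality is needed.
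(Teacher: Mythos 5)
Your proposal is correct and follows essentially the same route as the paper's own proof: the identical factorization extracting $A-B^m$, the same input bound $\ord_p(A-B^m)\geq mn/p+\ord_p(n)$ from the proposition preceding the Conjecture, and the same collapse to $\frac{1}{p-1}\bigl(mS_p(n)-S_p(mn)+S_p(m)-m\bigr)$, handled by the corollary to Lemma \ref{lm:Multiplicative}. The only cosmetic difference is that the paper writes everything in terms of $n=p^ra$ (so the leftover quantity appears as $mS_p(a)-S_p(ma)+S_p(m)-m$, equal to yours since $S_p(p^rx)=S_p(x)$).
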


\begin{proof}
Write $n=p^ra$ for a prime $p$, $(p,a) =1$, $r \geq 1$,
then we have
\ben
&& \ord_p \biggl( \frac{1}{m!\cdot n} \biggl( \binom{m n}{n, \dots, n}
-  \binom{m n/p}{n/p, \dots, n/p} \biggr)  \biggr)\\
& = & \ord_p \biggl(\frac{1}{m! p^ra} \frac{(mp^{r-1}a)!}{((p^ra)!)^m}
 \biggl( \frac{(mp^r a)!}{(mp^{r-1} a)!}  - \biggl(\frac{(p^ra)!}{(p^{r-1}a)!} \biggr)^m \biggr) \biggr) \\
& \geq & (mp^{r-1}a + r) + \frac{mp^{r-1}a-S_p(mp^{r-1}a)}{p-1}
- m \frac{p^ra-S_p(p^ra)}{p-1} - \frac{m-S_p(m)}{p-1} - r \\
& = & \frac{1}{p-1} (mS_p(a)- S_p(ma) - m+S_p(m)) \geq 0.
\een
\end{proof}

\subsection{Congruence properties of $\frac{1}{m!} \binom{m(k+l)}{k, \dots, k, l, \dots, l}$}

\begin{proposition} \label{prop:7}
Let $k, l$ be two positive integers,
$p$ a prime such that $(p, k,l) = 1$.
Then for any positive integer $m$,
we have
\be
\frac{1}{m! \cdot (k+l)^m} \frac{(m(k+l))!}{(k!)^m \cdot (l!)^m} \in \bZ_p
\ee
\end{proposition}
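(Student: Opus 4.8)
The plan is to imitate the proofs of the preceding propositions: express the quantity as a ratio of factorials, compute its $p$-adic order from the formula $\ord_p(N!) = \frac{N - S_p(N)}{p-1}$, and reduce the claim $\ord_p(\cdot)\geq 0$ to an inequality between digit sums that is handled by the Lemmas of Section 2. Writing the multinomial number as $\frac{(m(k+l))!}{(k!)^m (l!)^m}$, I would first compute
\ben
&& \ord_p\biggl( \frac{1}{m!\cdot (k+l)^m} \frac{(m(k+l))!}{(k!)^m (l!)^m} \biggr) \\
& = & \ord_p((m(k+l))!) - \ord_p(m!) - m\ord_p(k!) - m\ord_p(l!) - m\ord_p(k+l) \\
& = & \frac{1}{p-1}\bigl( mS_p(k) + mS_p(l) + S_p(m) - S_p(m(k+l)) - m \bigr) - m\ord_p(k+l),
\een
where in the last line I substitute the formula for $\ord_p(N!)$ and use $m(k+l) = mk + ml$ to cancel the terms linear in $k,l$. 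So it suffices to show
\be
mS_p(k) + mS_p(l) + S_p(m) - S_p(m(k+l)) - m - m(p-1)\ord_p(k+l) \geq 0 .
\ee

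The key step is to estimate the ``mixed'' term $S_p(m(k+l))$ by the multiplicative inequality $S_p(m(k+l)) \leq S_p(m)\cdot S_p(k+l)$ of Lemma \ref{lm:Multiplicative}, rather than by splitting $m(k+l) = mk+ml$ additively. Since $S_p(m(k+l))$ enters with a minus sign, it then suffices to show
\be
m\bigl(S_p(k) + S_p(l) - (p-1)\ord_p(k+l)\bigr) + S_p(m)\bigl(1 - S_p(k+l)\bigr) - m \geq 0 .
\ee
I would then apply the additive inequality, Lemma \ref{lm:Additive2} with the two terms $k$ and $l$: because the hypothesis $(p,k,l)=1$ forces $\min\{\ord_p(k),\ord_p(l)\}=0$, it gives $S_p(k)+S_p(l) \geq S_p(k+l) + (p-1)\ord_p(k+l)$. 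Substituting this lower bound into the first group of terms yields
\be
\geq m S_p(k+l) + S_p(m)\bigl(1 - S_p(k+l)\bigr) - m = (m - S_p(m))\bigl(S_p(k+l) - 1\bigr) \geq 0,
\ee
where the last inequality uses $m \geq S_p(m)$ and $S_p(k+l) \geq 1$. This completes the argument.

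The only real obstacle is choosing the right pair of estimates and the order in which to apply them. If one instead bounds $S_p(m(k+l))$ by writing $m(k+l) = mk + ml$ and applying the additive lemma to $mk$ and $ml$, one is left with an uncontrollable negative term of size $(m-1)(p-1)\ord_p(k+l)$. The correct move is to use the multiplicative inequality on $S_p(m(k+l))$ and reserve the additive inequality for $S_p(k)+S_p(l)$ alone, where the coprimality hypothesis $(p,k,l)=1$ is precisely what is needed; after that the remaining estimate collapses to the manifestly nonnegative $(m-S_p(m))(S_p(k+l)-1)$, exactly as in Proposition \ref{prop:4}.
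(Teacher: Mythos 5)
Your proof is correct and follows essentially the same route as the paper: both compute the $p$-adic order via $\ord_p(N!)=\frac{N-S_p(N)}{p-1}$, bound $S_p(m(k+l))$ by $S_p(m)S_p(k+l)$ using Lemma \ref{lm:Multiplicative}, apply Lemma \ref{lm:Additive2} to $S_p(k)+S_p(l)$ where $(p,k,l)=1$ gives $\min\{\ord_p(k),\ord_p(l)\}=0$, and reduce the remainder to $(m-S_p(m))(S_p(k+l)-1)\geq 0$. Your closing remark about why the additive decomposition $m(k+l)=mk+ml$ would fail is a sound observation, though not needed for the proof itself.
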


\begin{proof}
Because $(p, k, l) = 1$,
we have
$$\min \{\ord_p(k), \ord_p(l)\} = 0.$$
Now we have
\ben
&& \ord_p \biggl( \frac{1}{m! \cdot (k+l)^m} \frac{(m(k+l))!}{(k!)^m \cdot (l!)^m}  \biggr) \\
& = & \frac{1}{p-1} \big[
m(k+l) - S_p(m(k+l)) - m (k-S_p(k)) - m (l- S_p(l)) \\
&& - (m-S_p(m))\big] - m \cdot \ord_p(k+l) \\
& = & \frac{1}{p-1} \big[ m S_p(k) + m S_p(l) - S_p(m(k+l) + S_p(m) - m \big] - m \cdot \ord_p (k+l) \\
& \geq & \frac{1}{p-1}
(mS_p(k) +m S_p(1) - S_p(m) S_p(k+l) + S_p(m) - m )- m \ord_p(k+l) \\
& \geq & \frac{m}{p-1} \big[ S_p(k) + S_p(l) - S_p(k+l)) - (p-1) \cdot \ord_p (k+l)\big]) \\
& + &  (m - S_p(m)) (S_p(k+l) -1) \geq 0.
\een
\end{proof}

\begin{proposition} \label{prop:8}
Let $m$, $k$ and $l$ be positive rational integers,
$k$ and $l$ are divisible by a prime $p$.
Then the following inequality holds:
\be
\ord_p \biggl( \frac{(m(k+l))!}{(m(k+l)/p)!}  - \biggl(\frac{k!l!}{(k/p)!(l/p)!} \biggr)^m \biggr)
\geq m(k+l)/p + \ord_p (k+l),
\ee
whenever $p$ is odd, or $p=2$ and $m$ or $l/2$ is even.
When $p=2$, $m$ and $l/2$ are odd,
the following inequality holds:
\be
\ord_2 \biggl( \frac{(m(k+l))!}{(m(k+l)/2)!}  - \biggl(\frac{k!l!}{(k/2)!(l/2)!} \biggr)^m \biggr)
\geq m(k+l)/2 + 1,
\ee
\end{proposition}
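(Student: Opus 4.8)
The plan is to reduce both factorial quotients to products of integers prime to $p$. For any $N$ divisible by $p$ one has the elementary identity $\frac{N!}{(N/p)!}=p^{N/p}P(N)$, where $P(N):=\prod_{1\le j\le N,\ (j,p)=1}j$, obtained by splitting $N!$ into its factors divisible by $p$ (whose product is $p^{N/p}(N/p)!$) and those prime to $p$. Since $p\mid k$, the numbers $k+l$ and $m(k+l)$ are divisible by $p$, so applying this three times gives
$$\frac{(m(k+l))!}{(m(k+l)/p)!}-\Bigl(\frac{k!\,l!}{(k/p)!(l/p)!}\Bigr)^m=p^{m(k+l)/p}\Bigl(P(m(k+l))-\bigl(P(k)P(l)\bigr)^m\Bigr).$$
Writing $D=\ord_p(k+l)\ge 1$, I would then show $\ord_p\bigl(P(m(k+l))-(P(k)P(l))^m\bigr)\ge D$ in every case covered by the first alternative, and $\ge 1$ in the case $p=2$ with $m$ and $l/2$ odd; adding $m(k+l)/p$ yields the two asserted bounds.

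Next I would record a periodicity property of $P$: grouping $\{1,\dots,p^Dc\}$ into $c$ consecutive blocks of length $p^D$ and noting that each block contributes $\equiv\pi_D:=P(p^D)\pmod{p^D}$ to the product shows $P(p^Dc)\equiv\pi_D^{\,c}\pmod{p^D}$ for all $c\ge 0$ (this also recovers \eqref{eqn:Remainder}). Writing $k+l=p^Dc$ with $(p,c)=1$, this gives $P(k+l)\equiv\pi_D^{\,c}$ and $P(m(k+l))=P(p^D\cdot mc)\equiv\pi_D^{\,mc}\pmod{p^D}$, hence $P(m(k+l))\equiv P(k+l)^m\pmod{p^D}$. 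So the whole problem reduces to comparing $P(k)P(l)$ with $P(k+l)$ modulo $p^D$.

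The key step is a ``merge'' lemma: if $N_1=p^{r_1}a_1$ with $(p,a_1)=1$, $r_1\ge 1$, and $N_2$ is a positive multiple of $p$, then $P(N_1)P(N_2)\equiv(-1)^{p^{r_1-1}a_1(p-1)}P(N_1+N_2)\pmod{p^{\ord_p(N_1+N_2)}}$. I would prove it by writing $P(N_1+N_2)=P(N_2)\prod_{1\le i\le N_1,\ (i,p)=1}(N_2+i)$; since $p^{\ord_p(N_1+N_2)}\mid N_1+N_2$ we may replace $N_2$ by $-N_1$, so $N_2+i\equiv-(N_1-i)$, and as $i$ ranges over the integers of $[1,N_1]$ prime to $p$ (hence $i\ne N_1$) so does $N_1-i$; counting these integers produces the sign $(-1)^{p^{r_1-1}a_1(p-1)}$ times $P(N_1)$. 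For $p$ odd this sign is $+1$; for $p=2$ it is $(-1)^{2^{r_1-1}a_1}$, equal to $+1$ when $r_1\ge 2$ and to $-1$ when $r_1=1$ (as $a_1$ is then odd). Applying the lemma with $N_1=k$, $N_2=l$ and raising to the $m$-th power, $(P(k)P(l))^m\equiv\epsilon^m P(k+l)^m\equiv\epsilon^m P(m(k+l))\pmod{p^D}$ with $\epsilon=\pm1$ as above; since $P(m(k+l))$ is a $p$-adic unit,
$$P(m(k+l))-(P(k)P(l))^m\equiv(1-\epsilon^m)\,P(m(k+l))\pmod{p^D}.$$
When $\epsilon^m=1$ the right side vanishes mod $p^D$, giving order $\ge D$; this covers $p$ odd, $p=2$ with $m$ even, and $p=2$ with $\ord_2(k)\ge 2$. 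If $p=2$ and $l/2$ is even while $\ord_2(k)=1$, then $\ord_2(k+l)=1=D$ and $1-\epsilon^m\in\{0,2\}$ is still $\equiv0\pmod{p^D}$, so the first alternative is fully handled. Finally, if $p=2$ and $m,l/2$ are odd, then $1-\epsilon^m\in\{0,2\}$ forces the order to be $\ge\min(D,1)=1$. The main obstacle is the merge lemma, in particular the bookkeeping of the sign $\epsilon$ for $p=2$ and the verification that the stated case split corresponds exactly to $\epsilon^m=1$ versus $\epsilon^m=-1$; everything else is elementary.
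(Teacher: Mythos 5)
Your proof is correct and follows essentially the same route as the paper: both extract the exact power $p^{N/p}$ from each factorial quotient, reduce the remaining unit products modulo $p^{\ord_p(k+l)}$ via the periodicity of $\prod_{(j,p)=1}j$ (the paper's \eqref{eqn:Remainder}), and pick up a sign from reversing a block of the product, with the case split on $p=2$ coming from whether that sign raised to the $m$-th power is $+1$. The only cosmetic difference is that your ``merge'' sign is $(-1)^{(p-1)k/p}$ while the paper's is $(-1)^{(p-1)l/p}$; as your own case analysis shows, these agree modulo the relevant power of $p$, and your bookkeeping correctly recovers both asserted bounds.
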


\begin{proof}
By \eqref{eqn:Order} we have:
\ben
&& \ord_p \biggl( \frac{(m(k+l))!}{(m(k+l)/p)!}  \biggr) =  m(k+l)/p.
\een
Write $k+l=p^ra$, $r \geq 1$, $(p, a) = 1$.
Then we have by \eqref{eqn:Remainder}:
\ben
p^{-m(k+l)/p} \frac{(m(k+1))!}{(m(k+l)/p)!} & = & p^{-mp^{r-1}a}  \frac{(mp^r a)!}{(mp^{r-1} a)!}
\equiv (\prod_{\substack{0 < j < p^r \\ (j,p) = 1}} j)^{ma} \pmod{p^r}.
\een
On the other hand,
\ben
\ord_p \biggl(\frac{k!}{(k/p)!} \biggr) = k/p, \qquad
\ord_p \biggl(\frac{l!}{(l/p)!} \biggr) = l/p.
\een
Consider $p^{-l/p} \frac{l!}{(l/p)!}$.
First write it as a product of $l-l/p= (p-1) l/p$ terms,
each term is a number between $l/p=1$ and $l$ with its $p$ factors removed,
then rewrite each term in the form of $p^r - x$.
Modulo $p^r$, one then gets a product of these $x$'s up to a sign of
$(-1)^{(p-1)l/p}$.
It is not hard to see that
\be
p^{-k/p} \cdot \frac{k!}{(k/p)!} \cdot p^{-l/p} \cdot \frac{l!}{(l/p)!}
\equiv (-1)^{(p-1)l/p} (\prod_{\substack{1 \leq j < p^r \\(j,p)=1}} j)^a \pmod{p^r}.
\ee
When $p$ is odd, $p-1$ is even so $ (-1)^{(p-1)l/p}  =1$;
when $p = 2$ and $m$ or $l/2$ is even,
$(-1)^{m(p-1)l/p}=1$.
Therefore, under these conditions,
we have
\be
p^{-m(k+l)/p} \cdot \biggl(\frac{k!l!}{(k/p)!(l/p)!} \biggr)^m
\equiv \biggl(\prod_{\substack{1 \leq j < p^r \\(j,p)=1}} j\biggr)^{ma}  \pmod{p^r}.
\ee
In the case of $p=2$, $l/2$ and $m$ are odd,
\ben
&& 2^{-m(k+l)/2} \biggl(\frac{(m(k+l))!}{(m(k+l)/2)!}  - \biggl(\frac{k!l!}{(k/2)!(l/2)!}  \biggr)^m \biggr)
\equiv 2 \biggl(\prod_{\substack{1 \leq j < 2^r \\(j,2)=1}} j\biggr)^{ma}  \pmod{2^r}.
\een
Therefore the proof is complete.
\end{proof}

\begin{proposition} \label{prop:9}
For any positive integers $m$, $k$ and $l$,
if $k$ and $l$ are divisible by a prime $p$,
then we have:
\be
\frac{1}{m!\cdot (k+l)} \biggl( \frac{(m(k+l))! }{(k!l!)^m}
-  \frac{(m(k+l)/p)!}{((k/p)!(l/p)!)^m} \biggr) \in \bZ_p.
\ee
\end{proposition}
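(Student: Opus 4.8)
The plan is to mimic the proof of Proposition~\ref{prop:6}, with Proposition~\ref{prop:8} playing the role there played by its predecessor. Since $p\mid k$ and $p\mid l$ we have $p\mid(k+l)$; write $k+l=p^{r}a$ with $(p,a)=1$ and $r\ge 1$. First I would factor the difference appearing in the statement as
\[
\frac{(m(k+l))!}{(k!\,l!)^{m}}-\frac{(m(k+l)/p)!}{((k/p)!\,(l/p)!)^{m}}
=\frac{(m(k+l)/p)!}{(k!\,l!)^{m}}\biggl(\frac{(m(k+l))!}{(m(k+l)/p)!}-\Bigl(\frac{k!\,l!}{(k/p)!\,(l/p)!}\Bigr)^{m}\biggr),
\]
so that the quantity in the proposition equals $\frac{1}{m!\,(k+l)}\cdot\frac{(m(k+l)/p)!}{(k!\,l!)^{m}}$ times the difference $D$ whose $p$-adic order is estimated in Proposition~\ref{prop:8}.

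The second step is to add up $p$-adic orders. Using $\ord_p(j!)=(j-S_p(j))/(p-1)$, the identities $\ord_p((m(k+l)/p)!)=(mp^{r-1}a-S_p(ma))/(p-1)$ and $\ord_p(k+l)=r$, together with $\ord_p(D)\ge m(k+l)/p+\ord_p(k+l)=mp^{r-1}a+r$ from the first inequality of Proposition~\ref{prop:8} (the case $p$ odd, or $p=2$ with $m$ or $l/2$ even), the same cancellation as in Proposition~\ref{prop:6} occurs: the $mp^{r-1}a$ and the $\pm r$ disappear, leaving
\[
\ord_p\biggl(\frac{1}{m!\,(k+l)}\Bigl(\cdots\Bigr)\biggr)\ \ge\ \frac{m\bigl(S_p(k)+S_p(l)\bigr)-S_p(ma)-m+S_p(m)}{p-1}.
\]
By Lemma~\ref{lm:Additive2} applied to $k$ and $l$ one has $S_p(k)+S_p(l)\ge S_p(k+l)=S_p(p^{r}a)=S_p(a)$, so the right-hand side is at least $\bigl(mS_p(a)-S_p(ma)-m+S_p(m)\bigr)/(p-1)$, which is $\ge 0$ by the Corollary to Lemma~\ref{lm:Multiplicative} (applied with $n=a$, since $S_p(a)\ge 1$ and $m\ge S_p(m)$). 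This settles every case covered by the first inequality of Proposition~\ref{prop:8}.

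The delicate point, which I expect to be the main obstacle, is the residual case $p=2$ with $m$ and $l/2$ both odd, where Proposition~\ref{prop:8} yields only $\ord_2(D)\ge m(k+l)/2+1=mp^{r-1}a+1$, falling short of $mp^{r-1}a+r$ when $r>1$. I would split this into two subcases. If $k/2$ is even then $\ord_2(k)\ge 2>1=\ord_2(l)$, forcing $r=\ord_2(k+l)=1$, so the weaker bound already equals the one needed (equivalently, invoke Proposition~\ref{prop:8} with $k$ and $l$ interchanged). If $k/2$ is odd, then $k/2$ and $l/2$ are both odd, so $(k+l)/2=(k/2)+(l/2)$ is even with $\ord_2((k+l)/2)=r-1\ge 1$; applying Lemma~\ref{lm:Additive2} to $k/2$ and $l/2$ gives $S_2(k)+S_2(l)=S_2(k/2)+S_2(l/2)\ge S_2((k+l)/2)+(r-1)=S_2(a)+(r-1)$. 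Feeding this sharper estimate into the order computation above supplies exactly the extra $r-1$ lost in Proposition~\ref{prop:8}, so one again reaches a lower bound $\ge\bigl(mS_2(a)-S_2(ma)-m+S_2(m)\bigr)/(p-1)\ge 0$. Everything outside this last subcase is a routine transcription of the computation in Proposition~\ref{prop:6}.
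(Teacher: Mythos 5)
Your proof is correct and follows essentially the same route as the paper: the same factorization of the difference, the same appeal to Proposition~\ref{prop:8}, and the same bookkeeping of $p$-adic orders reduced to the additive and multiplicative inequalities. The only cosmetic difference is in the residual case $p=2$ with $m$ and $l/2$ odd, where the paper avoids your subcase split on the parity of $k/2$ by applying Lemma~\ref{lm:Additive2} directly with $\min\{\ord_2(k),\ord_2(l)\}\le\ord_2(l)=1$, which yields the same extra term $(m-1)(\ord_2(k+l)-1)$ that you recover.
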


\begin{proof}
When $p$ is odd, or $p=2$ and $m$ or $l/2$ is even,
then we have
\ben
&& \ord_p \biggl( \frac{1}{m!\cdot (k+l)} \biggl( \frac{(m(k+l))! }{(k!l!)^m}
-  \frac{(m(k+l)/p)!}{((k/p)!(l/p)!)^m} \biggr)  \biggr)\\
& = & \ord_p \biggl(  \frac{1}{m!\cdot (k+l)} \cdot \frac{(m(k+l)/p)! }{(k!l!)^m}
 \biggl( \frac{(m(k+l))!}{(m (k+l)/p)!}  - \biggl(\frac{k!l!}{(k/p)!(l/p)!} \biggr)^m \biggr) \biggr) \\
& \geq & (m (k+l)/p + \ord_p(k+l)) + \frac{m(k+l)/p-S_p(m(k+l)/p)}{p-1} \\
&& - m \frac{k-S_p(k)}{p-1}- m \frac{l-S_p(l)}{p-1} - \frac{m-S_p(m)}{p-1} - \ord_p (k+l) \\
& = & \frac{1}{p-1} (mS_p(k) + m S_p(l)- S_p(m(k+l)) - (m-S_p(m))) \\
& \geq & \frac{1}{p-1} (m S_p(k+l) - S_p(m) S_p(k+l) - (m -S_p(m))) \\
& = & \frac{1}{p-1} (m-S_p(m))(S_p(k+l) -1) \geq 0.
\een
When $p=2$ and both $m$ and $l/2$ are odd,
then we have
\ben
&& \ord_2 \biggl( \frac{1}{m!\cdot (k+l)} \biggl( \frac{(m(k+l))! }{(k!l!)^m}
-  \frac{(m(k+l)/2)!}{((k/2)!(l/2)!)^m} \biggr)  \biggr)\\
& = & \ord_2 \biggl(  \frac{1}{m!\cdot (k+l)} \cdot \frac{(m(k+l)/2)! }{(k!l!)^m}
 \biggl( \frac{(m(k+l))!}{(m (k+l)/2)!}  - \biggl(\frac{k!l!}{(k/2)!(l/2)!} \biggr)^m \biggr) \biggr) \\
& \geq & (m (k+l)/2 + 1) + (m(k+l)/2-S_2(m(k+l)/2) ) \\
&& - m (k-S_2(k) )- m (l-S_2(l) ) - (m-S_2(m)) - \ord_2 (k+l) \\
& = & mS_2(k) + m S_2(l)- S_2(m(k+l)) - (m-S_2(m)) - (\ord_2 (k+l)-1) \\
& \geq & m (S_2(k+l) + \ord_2(k+l)-1) - S_2(m) S_2(k+l) \\
&& - (m -S_2(m)) - (\ord_2 (k+l)-1) \\
& = & (m-S_2(m))(S_2(k+l) -1) + (m-1) (\ord_2(k+l) - 1) \geq 0.
\een
\end{proof}

\subsection{Congruence properties of $\binom{m \sum_{i=1}^n k_i}{k_1, \dots, k_1, k_2, \dots, k_2, \dots, k_n, \dots, k_n}$}

Weaker versions of Proposition \ref{prop:Conj2} and Proposition \ref{prop:Conj3}
were conjectured in an earlier version of this paper,
their proofs are communicated to the author by Hanxiong Zhang and Yinhua Ai respectively.
The proof of the present results are only slight modifications of their proofs.

\begin{proposition} \label{prop:Conj2}
For any positive integers $k_1, \dots, k_n$ and $m$,
we have
\be
\frac{(k_1, \dots, k_n)^m}{m! \cdot (\sum_{i=1}^n k_i)^m} \frac{(m\sum_{i=1}^n k_i)!}{\prod_{i=1}^n (k_i!)^m} \in \bZ.
\ee
\end{proposition}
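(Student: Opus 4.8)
The plan is to reduce the statement to two integrality facts already proved above, rather than to compute $\ord_p$ of the expression directly: a direct valuation estimate in the style of the earlier propositions runs into trouble with the factor $(\sum_i k_i)^{-m}$ and does not visibly come out nonnegative. First I would divide out the gcd. Put $d=(k_1,\dots,k_n)$, $K=\sum_{i=1}^n k_i$, $\kappa=K/d$ and $\kappa_i=k_i/d$, so that $(\kappa_1,\dots,\kappa_n)=1$ and $\sum_i\kappa_i=\kappa$. Since $(k_1,\dots,k_n)^m/(\sum_i k_i)^m=d^m/K^m=1/\kappa^m$, the number to be shown integral is
\[
\frac{1}{m!\,\kappa^m}\cdot\frac{(mK)!}{\prod_{i=1}^n(k_i!)^m}.
\]

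The key step is the elementary factorization
\[
\frac{(mK)!}{\prod_{i=1}^n(k_i!)^m}=\binom{mK}{K,\dots,K}\cdot\binom{K}{k_1,\dots,k_n}^m ,
\]
where the first multinomial coefficient has $m$ parts each equal to $K$; it follows at once from $\binom{mK}{K,\dots,K}=(mK)!/(K!)^m$. Pushing $\kappa^{-m}$ into the $m$-th power, the expression becomes
\[
\frac{1}{m!}\binom{mK}{K,\dots,K}\cdot\left(\frac{1}{\kappa}\binom{K}{k_1,\dots,k_n}\right)^m .
\]
Here $\frac{1}{m!}\binom{mK}{K,\dots,K}\in\bZ$ by Proposition \ref{prop:4} (applied with its "$n$" taken to be $K$), which gives membership in $\bZ_p$ for every prime $p$ and hence in $\bigcap_p\bZ_p=\bZ$; and $\frac{1}{\kappa}\binom{K}{k_1,\dots,k_n}\in\bZ$ by Proposition \ref{prop:1.3} (applied with its "$k_i$" taken to be $\kappa_i=k_i/d$ and its auxiliary multiplier taken to be $d$, which is legitimate because $(\kappa_1,\dots,\kappa_n)=1$), since then $\frac{1}{\sum_i\kappa_i}\binom{d\sum_i\kappa_i}{d\kappa_1,\dots,d\kappa_n}$ is exactly $\frac{1}{\kappa}\binom{K}{k_1,\dots,k_n}$. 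Thus the quantity in the proposition is a product of integers, which is the claim.

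Once the factorization is spotted there is essentially no hard step; what remains is only bookkeeping. The main points to verify are that the hypotheses of Propositions \ref{prop:4} and \ref{prop:1.3} are indeed met after removing the common factor $d$, that $K,\kappa\ge1$ so that all factorials and binomial coefficients above are well defined, and that the degenerate case $n=1$ (where $\kappa=1$ and $\binom{K}{k_1}=1$) simply reduces to Proposition \ref{prop:4} itself.
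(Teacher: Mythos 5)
Your proof is correct, but it takes a genuinely different route from the paper's. The paper does exactly the direct valuation estimate you predicted would not work: it writes $\ord_p$ of the whole expression via $\ord_p(N!)=(N-S_p(N))/(p-1)$ and then bounds it below by chaining Lemma \ref{lm:Multiplicative} (to replace $S_p(m\sum_i k_i)$ by $S_p(m)S_p(\sum_i k_i)$) with Lemma \ref{lm:Additive2} (whose correction term $\ord_p(\sum_i k_i)-\min_i\ord_p(k_i)$ exactly absorbs the problematic factor $(k_1,\dots,k_n)^m/(\sum_i k_i)^m$), arriving at $\tfrac{1}{p-1}(m-S_p(m))(S_p(\sum_i k_i)-1)\ge 0$; so your motivating claim that this approach ``does not visibly come out nonnegative'' is not accurate, though it does not affect your argument. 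Your route instead rests on the factorization
\[
\frac{(k_1,\dots,k_n)^m}{m!\,(\sum_i k_i)^m}\,\frac{(mK)!}{\prod_i (k_i!)^m}
=\frac{1}{m!}\binom{mK}{K,\dots,K}\cdot\Bigl(\frac{1}{\kappa}\binom{K}{k_1,\dots,k_n}\Bigr)^m,
\qquad K=\sum_i k_i,\ \ \kappa=K/(k_1,\dots,k_n),
\]
which I have checked, together with the applications of Proposition \ref{prop:4} (with its $n$ equal to $K$) and Proposition \ref{prop:1.3} (with its $k_i$ equal to $k_i/d$ and its $m$ equal to $d=(k_1,\dots,k_n)$, the coprimality hypothesis being satisfied after dividing out $d$); both are legitimate, and the degenerate case $n=1$ is handled. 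What each approach buys: yours is shorter and more structural, exhibiting the quantity as a product of two classically integral multinomial expressions already established in the paper, and it makes the role of the gcd transparent; the paper's computation is self-contained within its valuation framework and records an explicit lower bound on $\ord_p$ (in the spirit of the refined second statement of Proposition \ref{prop:4}), which is the form in which these estimates are reused elsewhere in Section 3, e.g.\ in the proof of Proposition \ref{prop:Conj3}.
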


\begin{proof}
For any prime $p$ we have
\ben
&& \ord_p \biggl(\frac{(k_1, \dots, k_n)^m}{m! \cdot (\sum_{i=1}^n k_i)^m} \frac{(m\sum_{i=1}^n k_i)!}{\prod_{i=1}^n (k_i!)^m} \biggr) \\
& = & \frac{m\sum_{i=1}^n k_i - S_p(m\sum_{i=1}^n k_i)}{p-1}- m \frac{\sum_{i=1}^n (k_i - S_p(k_i))}{p-1} \\
&& - \frac{m-S_p(m)}{p-1} - m \ord_p(\sum_{i=1}^n k_i) + m \ord_p ((k_1, \dots, k_n)) \\
& = & \frac{1}{p-1} (  - S_p(m\sum_{i=1}^n k_i) + m S_p(\sum_{i=1}^n k_i)
 - m+S_p(m)) \\
 &&  - m  (\ord_p(\sum_{i=1}^n k_i) - \min\{\ord_p(k_1), \dots, \ord_p(k_n)\}) \\
& \geq & \frac{1}{p-1} ( -S_p(m)S_p(\sum_{i=1}^n k_i) + m \sum_{i=1}^n  S_p(k_i) - m+S_p(m)) \\
& - & m (\ord_p(\sum_{i=1}^n k_i)) - \min \{ \ord_p(k_1), \dots, \ord_p(k_n)\}) \\
& \geq & \frac{1}{p-1} (- S_p(m)S_p(\sum_{i=1}^n k_i) + m  S_p(\sum_{i=1}^n k_i)
 - m+S_p(m)) \\
& = & \frac{1}{p-1} (m -S_p(m)) \cdot (S_p(\sum_{i=1}^n k_i) - 1) \geq 0.
\een
Here in the first inequality we have used Lemma \ref{lm:Multiplicative} and in the second inequality we have
used Lemma \ref{lm:Additive2}.
\end{proof}

\begin{proposition} \label{prop:Conj3}
For any positive integers $m$, $k_1, \dots, k_n$,
then we have:
\be
\frac{1}{p} \cdot \frac{1}{m!\cdot (\sum_{i=1}^n k_i)^m} \biggl( \frac{(m\sum_{i=1}^n k_ip)! }{\prod_{i=1}^n ((k_ip)!)^m}
-  \frac{(m\sum_{i=1}^n k_i)!}{\prod_{i=1}^n (k_i!)^m} \biggr) \in \bZ_p.
\ee
\end{proposition}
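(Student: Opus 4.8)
The plan is to imitate the proofs of Proposition~\ref{prop:6} and Proposition~\ref{prop:9}: write the difference of the two multinomial numbers as an explicit integer (whose $p$-adic order is read off from $\ord_p(n!)=\frac{n-S_p(n)}{p-1}$) times a Dwork-type remainder difference of two products, and bound the $p$-adic order of each piece. Put $K=\sum_{i=1}^n k_i$ and $\mu=\min_i\ord_p(k_i)$. Pulling out the common factor $\frac{(mK)!}{\prod_i((k_ip)!)^m}$ gives
\[
\frac{(mKp)!}{\prod_i((k_ip)!)^m}-\frac{(mK)!}{\prod_i(k_i!)^m}
=\frac{(mK)!}{\prod_i((k_ip)!)^m}\left(\frac{(mKp)!}{(mK)!}-\prod_{i=1}^n\Bigl(\frac{(k_ip)!}{k_i!}\Bigr)^m\right),
\]
and, since $S_p(k_ip)=S_p(k_i)$, the order of the first factor is $-mK+\frac1{p-1}\bigl(m\sum_iS_p(k_i)-S_p(mK)\bigr)$. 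Hence the proposition follows once we prove the congruence
\[
\ord_p\left(\frac{(mKp)!}{(mK)!}-\prod_{i=1}^n\Bigl(\frac{(k_ip)!}{k_i!}\Bigr)^m\right)\ \ge\ mK+1+m\mu .
\]
Indeed, adding this bound to the order of the first factor and then invoking Lemma~\ref{lm:Multiplicative} ($S_p(mK)\le S_p(m)S_p(K)$) together with Lemma~\ref{lm:Additive2} (applied to $k_1,\dots,k_n$, of minimal $p$-order $\mu$) bounds the order of the whole expression below by $1+\ord_p(m!)+m\ord_p(K)=\ord_p(p\cdot m!\cdot K^m)$, exactly as at the end of the proofs of Proposition~\ref{prop:6} and Proposition~\ref{prop:9}. (When $\mu=0$ the weaker bound $mK+1$ already suffices, and it follows at once from~\eqref{eqn:Remainder}; the substantial case is $\mu\ge1$, when every $k_i$ is divisible by $p$.)

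To prove the displayed congruence I would separate the multiples of $p$ in each factorial: $\frac{(k_ip)!}{k_i!}=p^{k_i}\prod_{1\le l\le k_ip,\ p\nmid l}l$ and likewise $\frac{(mKp)!}{(mK)!}=p^{mK}\prod_{1\le l\le mKp,\ p\nmid l}l$, so after cancelling $p^{mK}$ it remains to compare the two units $\prod_{1\le l\le mKp,\ p\nmid l}l$ and $\prod_{i=1}^n\bigl(\prod_{1\le l\le k_ip,\ p\nmid l}l\bigr)^m$. Grouping the factors of each product into consecutive blocks of length a power of $p$ and applying~\eqref{eqn:Remainder} block by block shows that both units are congruent to a common power of $\prod_{1\le l<p^{r+1},\ p\nmid l}l$ modulo $p^{\mu+1}$, which already settles the case $m=1$. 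For $m\ge2$ one must sharpen this: expanding a block product $\prod_{1\le l<p^{r+1},\ p\nmid l}(tp^{r+1}+l)$ as an elementary symmetric function of the residues $l$ and using that the power sums $\sum_l l^{-s}$ over the units modulo $p^{r+1}$ vanish to high $p$-adic order (Wolstenholme-type congruences) raises the block congruences to modulus $p^{2(r+1)}$, then $p^{3(r+1)}$, and so on; raising to the $m$-th power and multiplying the $n$ factors together then promotes the precision from $\mu+1$ to the required $m\mu+1$.

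The hardest step will be exactly this sharpening. A single application of~\eqref{eqn:Remainder} loses a factor of $p^{(m-1)\mu}$, and recovering it requires control over all the higher elementary symmetric functions of the units modulo $p^{r+1}$ (equivalently, over the coefficients of $\prod_l(X-l)$ taken over the units modulo $p^{r+1}$), as well as a separate, direct treatment of the anomalous primes $p=2$ and $p=3$ — precisely the kind of extra case analysis that Proposition~\ref{prop:8} carries out in the service of Proposition~\ref{prop:9}. By contrast the factorisation, the order computation, and the final assembly via Lemmas~\ref{lm:Multiplicative} and~\ref{lm:Additive2} are routine and parallel Propositions~\ref{prop:6} and~\ref{prop:9}.
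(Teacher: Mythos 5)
Your factorization, the order computation for $\frac{(mK)!}{\prod_i((k_ip)!)^m}$, and the final assembly via Lemma~\ref{lm:Multiplicative} and Lemma~\ref{lm:Additive2} are all correct, and they correctly identify what the printed statement demands: with $K=\sum_ik_i$ and $\mu=\min_i\ord_p(k_i)$, the two unit products must agree modulo $p^{m\mu+1}$. The genuine gap is that this deferred ``hardest step'' is not merely hard --- it is false, and so is the proposition as printed. Take $p=3$, $n=1$, $k_1=3$, $m=5$, so $K=3$, $\mu=1$ and your target is $\ord_3\ge mK+m\mu+1=21$. One computes $\prod_{1\le j\le 45,\,3\nmid j}j\equiv 512$ and $(1\cdot2\cdot4\cdot5\cdot7\cdot8)^5=2240^5\equiv 269\pmod{3^6}$, so the difference of units is $\equiv 243=3^5\pmod{3^6}$ and $\ord_3\bigl(\frac{45!}{15!}-(\frac{9!}{3!})^5\bigr)=15+5=20$, one short. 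Equivalently, $\ord_3\bigl(\binom{45}{9,9,9,9,9}-\binom{15}{3,3,3,3,3}\bigr)=6$ while $\ord_3(3\cdot5!\cdot3^5)=7$, so the displayed quantity has $\ord_3=-1$ and is not in $\bZ_3$. No Wolstenholme-type sharpening can rescue a false congruence; and since for $n=1$, $k_1=p^{\mu}$ every inequality in your assembly is an equality, your reduction is sharp, so the congruence you need is exactly equivalent to the (false) statement.

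For what it is worth, the paper's own proof uses the same factorization (phrased via $R_p(a)=\prod_{j=1}^{p-1}((a-1)p+j)$) but only establishes the congruence modulo $p^{\mu+1}$, compensating with the claim that $\ord_p\bigl(\frac{1}{m!\,K^m p}\frac{(mK)!}{\prod_i(k_i!)^m}\bigr)\ge-\mu$; Proposition~\ref{prop:Conj2} only yields $\ge-m\mu-1$, so that step fails for $m\ge2$ as well. What Dwork's Lemma actually requires for the last theorem of Section~4 --- and what is surely intended here --- is the statement with $\sum_ik_i$ to the \emph{first} power in the denominator rather than the $m$-th. If you rerun your assembly with that denominator, the needed precision drops to agreement of the units modulo $p^{\mu+1}$, which the single, easy application of \eqref{eqn:Remainder} in your second paragraph (equivalently, the paper's observation that $R_p(a)\equiv R_p(b)\pmod{p^{\mu+1}}$ whenever $a\equiv b\pmod{p^{\mu}}$) already delivers, and the whole proof closes with no further input. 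I would recommend proving that corrected statement and recording the example above as showing that the $m$-th power version cannot hold.
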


\begin{proof}
Note
\ben
&& \frac{1}{p} \cdot \frac{1}{m!\cdot (\sum_{i=1}^n k_i)^m} \biggl( \frac{(m\sum_{i=1}^n k_ip)! }{\prod_{i=1}^n ((k_ip)!)^m}
-  \frac{(m\sum_{i=1}^n k_i)!}{\prod_{i=1}^n (k_i!)^m} \biggr) \\
& = & \frac{1}{p} \cdot  \frac{1}{m!\cdot (\sum_{i=1}^n k_i)^mp} \frac{(m\sum_{i=1}^n k_i)!}{\prod_{i=1}^n (k_i!)^m}
\biggl( \frac{R_p(1) \cdots R_p(m\sum_{i=1}^n k_i)}{\prod_{i=1}^n (R_p(1) \cdots R_p(k_i))^m} - 1 \biggr),
\een
where
\be
R_p(a) = \prod_{j=1}^{p-1} ((a-1)p+j).
\ee
Let $\ord_p(k_1, \dots, k_n) = r$.
By Proposition \ref{prop:Conj2},
\be
\ord_p \biggl( \frac{1}{m!\cdot (\sum_{i=1}^n k_i)^mp} \frac{(m\sum_{i=1}^n k_i)!}{\prod_{i=1}^n (k_i!)^m} \biggr)
\geq - r.
\ee
So it suffices to show that
\be
\ord_p \biggl(\frac{1}{p} \cdot
\biggl( \frac{R_p(1) \cdots R_p(m\sum_{i=1}^n k_i)}{\prod_{i=1}^n (R_p(1) \cdots R_p(k_i))^m} - 1 \biggr)
\biggr) \geq -r,
\ee
or equivalently,
\be
R_p(1) \cdots R_p(m\sum_{i=1}^n k_i)
\equiv \prod_{i=1}^n (R_p(1) \cdots R_p(k_i))^m \pmod{p^{r+1}}.
\ee
Now note if $a \equiv b \pmod{p^r}$,
then one has
\be
R_p(a) \equiv R_p(b) \pmod{p^{r+1}}.
\ee
Now we have $p^r | k_i$,
therefore,
\be
R_p(1) \cdots R_p(k_i) \cong (R_p(1) \cdots R_p(p^r))^{k_i/p^r} \pmod{p^{r+1}},
\ee
and so
\be
\prod_{i=1}^n (R_p(1) \cdots R_p(k_i))^m \cong (R_p(1) \cdots R_p(p^r))^{m\sum_{i=1}^n k_i/p^r} \pmod{p^{r+1}};
\ee
similarly,
\be
R_p(1) \cdots R_p(m \sum_{i=1}^n k_i) \cong (R_p(1) \cdots R_p(p^r))^{m \sum_{i=1}^n k_i/p^r} \pmod{p^{r+1}}.
\ee
This completes the proof.
\end{proof}

\section{Integrality of Some Power Series}

In this section we establish by Dwork's Lemma the integrality of some formal power series
related to local mirror maps,
using the congruence properties of multinomial numbers established in last section.

\subsection{Dwork's Lemma}

Let $p$ be a prime number.

\begin{lemma} \cite{Kob}
Let $F(X) = \sum_i a_iX^i \in 1 + X\bQ_p[[X]]$.
Then $F(X) \in 1 + X\bZ_p[[X]]$ iff $F(X^p)/F(X)^p \in 1 + pX\bZ_p[[X]]$.
\end{lemma}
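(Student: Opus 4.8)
The plan is to prove the two implications separately. The direction $F\in 1+X\bZ_p[[X]]\Rightarrow F(X^p)/F(X)^p\in 1+pX\bZ_p[[X]]$ is a quick consequence of the Frobenius congruence for power series, while the converse will be an induction on the degree, and it is the converse where the real work lies.

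For the forward direction I would first record that $F(X)^p\equiv F(X^p)\pmod{p\bZ_p[[X]]}$. Writing $F=\sum_i a_iX^i$ with $a_i\in\bZ_p$, the coefficient of $X^m$ in $F^p$ is $\sum_{i_1+\cdots+i_p=m}a_{i_1}\cdots a_{i_p}$; grouping the summands into orbits of the cyclic permutation of the $p$ indices, every orbit has size $1$ or $p$, and the only size-$1$ orbit is the diagonal tuple, which exists exactly when $p\mid m$ and contributes $a_{m/p}^p$. Since $a_{m/p}^p\equiv a_{m/p}\pmod{p\bZ_p}$, this gives $[X^m]F^p\equiv[X^m]F(X^p)\pmod p$ for all $m$, so $F(X^p)-F(X)^p\in pX\bZ_p[[X]]$ (the constant terms both equal $1$). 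As $F(X)^p$ lies in the multiplicative group $1+X\bZ_p[[X]]$, hence is a unit of $\bZ_p[[X]]$, we get $F(X^p)/F(X)^p=1+\bigl(F(X^p)-F(X)^p\bigr)F(X)^{-p}\in 1+pX\bZ_p[[X]]$.

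For the converse, write $H(X)=F(X^p)/F(X)^p=1+\sum_{j\ge1}h_jX^j$ with $h_j\in p\bZ_p$, so $F(X^p)=F(X)^pH(X)$, and prove $a_n\in\bZ_p$ by induction on $n\ge 1$ (with $a_0=1$). Granting $a_1,\dots,a_{n-1}\in\bZ_p$, set $P(X)=\sum_{i=0}^{n-1}a_iX^i\in\bZ_p[X]$ and write $F=P+R$ with $R=a_nX^n+\cdots\in X^n\bQ_p[[X]]$. Since $R^j=O(X^{2n})$ for $j\ge2$ and $2n\ge n+1$, expanding $(P+R)^p$ yields $[X^n]F^p=[X^n]P^p+pa_n$. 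Comparing coefficients of $X^n$ in $F(X^p)=F^pH$, and noting that for $j\ge1$ the coefficient $[X^{n-j}]F^p$ involves only $a_0,\dots,a_{n-1}\in\bZ_p$ and hence lies in $\bZ_p$, one obtains $[X^n]F(X^p)=[X^n]P^p+pa_n+\varepsilon$ with $\varepsilon\in p\bZ_p$. Here $[X^n]F(X^p)$ equals $a_{n/p}$ if $p\mid n$ (and then $n/p\le n-1$, so this lies in $\bZ_p$) and $0$ otherwise; moreover the Frobenius congruence applied to the genuinely integral polynomial $P$ gives $P(X)^p\equiv P(X^p)\pmod p$, so $[X^n]P^p\equiv[X^n]P(X^p)\pmod p$, and $[X^n]P(X^p)$ is exactly the same value $a_{n/p}$ (resp.\ $0$). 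Therefore $pa_n=[X^n]F(X^p)-[X^n]P^p-\varepsilon\equiv 0\pmod{p\bZ_p}$, i.e.\ $a_n\in\bZ_p$, which closes the induction.

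I expect the last step of the converse to be the main obstacle: a crude estimate of $p$-adic valuations in the coefficient comparison only gives $\ord_p(a_n)\ge-1$, which is not enough. One must genuinely exploit the Frobenius congruence $P^p\equiv P(X^p)\pmod p$ for the integral truncation $P$ to see that the apparent $\tfrac1p$-denominator in $pa_n$ is matched, modulo $p$, by $[X^n]P^p$. The remaining ingredients are routine: the group structure of $1+X\bZ_p[[X]]$ and $1+pX\bZ_p[[X]]$, and bookkeeping with the $X$-adic filtration, which ensures that at each stage only the finitely many coefficients $a_0,\dots,a_{n-1}$ — already known to be integral — enter the relevant coefficient computations.
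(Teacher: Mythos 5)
Your proof is correct. Note that the paper itself gives no argument for this lemma --- it is quoted from Koblitz --- so there is nothing to diverge from; your two-step argument (the Frobenius congruence $F(X)^p\equiv F(X^p)\pmod p$ via cyclic-orbit counting for the forward direction, and the induction on $n$ comparing $[X^n]$ in $F(X^p)=F(X)^pH(X)$ against the integral truncation $P$ for the converse) is the standard proof of Dwork's lemma and all the steps check out.
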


\begin{lemma} \label{lm:Dwork1}
 \cite{Kob, Lia-Yau} Let $f(X) \in X\bQ_p[[X]]$. Then $e^{f(X)} \in 1 + X\bZ_p[[X]]$ iff
$f(X^p) - pf(X) \in pX\bZ_p[[X]]$.
\end{lemma}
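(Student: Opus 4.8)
The plan is to derive this from the multiplicative form of Dwork's lemma stated just above, by taking $F(X) = e^{f(X)}$. First I would check that this substitution is legitimate: since $f(X) \in X\bQ_p[[X]]$ has no constant term, the formal series $e^{f(X)} = \sum_{n \geq 0} f(X)^n/n!$ is a well-defined element of $1 + X\bQ_p[[X]]$ (for each power $X^k$ only finitely many $n$ contribute), and the formal exponential satisfies $e^{a+b} = e^a e^b$ and $e^{-a} = (e^a)^{-1}$ for $a, b \in X\bQ_p[[X]]$. Hence
\be
F(X^p)/F(X)^p = e^{f(X^p)}/e^{pf(X)} = e^{f(X^p) - pf(X)},
\ee
so by the previous lemma, $e^{f(X)} \in 1 + X\bZ_p[[X]]$ if and only if $e^{g(X)} \in 1 + pX\bZ_p[[X]]$, where $g(X) := f(X^p) - pf(X) \in X\bQ_p[[X]]$.

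Thus the lemma reduces to the claim: for $g(X) \in X\bQ_p[[X]]$ one has $e^{g(X)} \in 1 + pX\bZ_p[[X]]$ if and only if $g(X) \in pX\bZ_p[[X]]$. For the ``if'' direction, write $g = ph$ with $h \in X\bZ_p[[X]]$; then $g^n/n! = (p^n/n!)\,h^n$, and using $\ord_p(n!) = (n - S_p(n))/(p-1)$ from Section 2 one computes $\ord_p(p^n/n!) = \frac{n(p-2) + S_p(n)}{p-1} \geq 1$ for all $n \geq 1$ (since $S_p(n) \geq 1$). So each term $g^n/n!$ lies in $pX\bZ_p[[X]]$, and summing — the sum is $X$-adically convergent, hence coefficientwise finite — gives $e^g - 1 \in pX\bZ_p[[X]]$. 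For the ``only if'' direction, suppose $e^g = 1 + pu$ with $u \in X\bZ_p[[X]]$; then $g = \log(1 + pu) = \sum_{n \geq 1} (-1)^{n-1}(p^n/n)\,u^n$, and since $\ord_p(n) \leq n-1$ for every $n \geq 1$ we get $\ord_p(p^n/n) \geq 1$, whence $g \in pX\bZ_p[[X]]$. Here one uses the formal identities $\log \circ \exp = \mathrm{id}$ on $X\bQ_p[[X]]$ and $\exp \circ \log = \mathrm{id}$ on $1 + X\bQ_p[[X]]$.

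The argument is essentially routine; the only points needing care are the bookkeeping with formal power series in the $X$-adic topology — so that $\exp$, $\log$ and their compositions make sense and the stated identities hold — and the two $p$-adic valuation estimates $\ord_p(p^n/n!) \geq 1$ and $\ord_p(p^n/n) \geq 1$ for $n \geq 1$, which is where the prime $p$ and the function $S_p$ of Section 2 enter. I do not expect a genuine obstacle beyond carrying these out carefully.
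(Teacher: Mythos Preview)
The paper does not give its own proof of this lemma: it is stated with citations to Koblitz and Lian--Yau, and no argument is supplied. So there is nothing to compare against directly.

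Your proposal is correct and is essentially the standard derivation. The reduction to the multiplicative Dwork lemma via $F = e^f$ is exactly the right move, and the residual equivalence ``$e^{g} \in 1 + pX\bZ_p[[X]]$ iff $g \in pX\bZ_p[[X]]$'' is handled cleanly by your two valuation estimates. Both are fine: for the ``if'' direction your computation $\ord_p(p^n/n!) = (n(p-2)+S_p(n))/(p-1)$ indeed gives $\geq 1$ for all $n\geq 1$ (check $p=2$ separately, where it equals $S_2(n)\geq 1$, and for $p\geq 3$ the case $n=1$ gives exactly $1$ while $n\geq 2$ gives more); for the ``only if'' direction $\ord_p(n)\leq n-1$ is immediate from $p^{n-1}\geq n$. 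The formal-series bookkeeping (well-definedness of $\exp$ and $\log$, the identities $\log\circ\exp=\mathrm{id}$ and $\exp\circ\log=\mathrm{id}$ in the $X$-adic topology) is routine as you say. No gap.
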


It is straightforward to generalize these lemmas to multivariate case.

\begin{lemma} \cite{Kra-Riv}
Let $F(X_1, \dots, X_n) = \sum_I a_IX^I \in 1 + \sum_{i=1}^n X_i\bQ_p[[X_1, \dots, X_n]]$.
Then $F(X) \in 1 + X\bZ_p[[X]]$ iff $F(X^p)/F(X)^p \in 1 + pX\bZ_p[[X]]$.
\end{lemma}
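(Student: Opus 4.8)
The plan is to carry the proof of the one‑variable multiplicative lemma recalled above (following \cite{Kob}) over to $n$ variables; the argument transfers almost verbatim once one feeds in the elementary Frobenius congruence. Throughout, write $X=(X_1,\dots,X_n)$, $X^p=(X_1^p,\dots,X_n^p)$, $\mathfrak{m}=(X_1,\dots,X_n)$, and for a multi-index $I=(i_1,\dots,i_n)$ put $X^I=X_1^{i_1}\cdots X_n^{i_n}$, $|I|=i_1+\cdots+i_n$, and let $[X^I]H$ denote the coefficient of $X^I$ in a series $H$; thus $X\bZ_p[[X]]=\mathfrak{m}\bZ_p[[X]]$ and $pX\bZ_p[[X]]=p\mathfrak{m}\bZ_p[[X]]$. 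Recall that $H(X)^p\equiv H(X^p)\pmod{p\bZ_p[[X]]}$ for every $H\in\bZ_p[[X]]$ (reduction mod $p$ of the $p$-power map). For the ``only if'' direction this gives the claim at once: if $F\in 1+\mathfrak{m}\bZ_p[[X]]$ then $F$ is a unit in $\bZ_p[[X]]$ and $F(X^p)/F(X)^p=1+(F(X^p)-F(X)^p)F(X)^{-p}\in 1+p\bZ_p[[X]]$, and since this series has constant term $1$ its difference from $1$ lies in $p\bZ_p[[X]]\cap\mathfrak{m}\bZ_p[[X]]=p\mathfrak{m}\bZ_p[[X]]$.

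For the ``if'' direction, write $F=\sum_I a_IX^I$ with $a_0=1$ and suppose $F(X^p)=F(X)^p\,G(X)$ with $G\in 1+p\mathfrak{m}\bZ_p[[X]]$. I would prove $a_I\in\bZ_p$ by strong induction on $|I|$; the case $|I|=0$ is trivial. Assume $a_J\in\bZ_p$ for all $|J|<N$ (with $N\ge 1$), set $F_{<N}=\sum_{|J|<N}a_JX^J\in\bZ_p[X]$ and $F_{=N}=\sum_{|I|=N}a_IX^I$, and work modulo $\mathfrak{m}^{N+1}$. There $F\equiv F_{<N}+F_{=N}$, so (powers of $F_{=N}$ past the first lie in $\mathfrak{m}^{2N}\subseteq\mathfrak{m}^{N+1}$, and $F_{<N}(X)^{p-1}\equiv 1\pmod{\mathfrak{m}}$) one gets $F(X)^p\equiv F_{<N}(X)^p+pF_{=N}\pmod{\mathfrak{m}^{N+1}}$, and multiplying by $G\equiv 1\pmod{\mathfrak{m}}$,
\be
F(X^p)\equiv F_{<N}(X)^p\,G+pF_{=N}\pmod{\mathfrak{m}^{N+1}}.
\ee
Now apply the multivariate Frobenius congruence $F_{<N}(X)^p\equiv F_{<N}(X^p)\pmod{p\bZ_p[X]}$ (legitimate since $F_{<N}\in\bZ_p[X]$) to replace $F_{<N}(X)^p\,G$ by $F_{<N}(X^p)\,G$ up to an element of $p\bZ_p[[X]]$. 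Extracting the coefficient of $X^I$ for a fixed $I$ with $|I|=N$ yields
\be
[X^I]F(X^p)=[X^I]\bigl(F_{<N}(X^p)\,G\bigr)+p\,a_I+(\text{an element of }p\bZ_p).
\ee

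The induction is then closed by a case analysis on whether $p$ divides every component of $I$. If it does, write $I/p$ for the quotient multi-index; then $[X^I]F(X^p)=a_{I/p}\in\bZ_p$ (since $|I/p|=N/p<N$), while in $[X^I]\bigl(F_{<N}(X^p)\,G\bigr)=\sum_{pJ\le I}a_J\,[X^{I-pJ}]G$ every term lies in $p\bZ_p$ — because $a_J\in\bZ_p$ and $[X^{I-pJ}]G\in p\bZ_p$ whenever $I\ne pJ$ — except the single term $J=I/p$, which contributes $a_{I/p}\,[X^0]G=a_{I/p}$. Substituting, the two copies of $a_{I/p}$ cancel and we are left with $p\,a_I\in p\bZ_p$, i.e.\ $a_I\in\bZ_p$. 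If $p$ does not divide $I$, then $[X^I]F(X^p)=0$ and $[X^I]\bigl(F_{<N}(X^p)\,G\bigr)\in p\bZ_p$ (no $J$ satisfies $pJ=I$), so again $p\,a_I\in p\bZ_p$ and $a_I\in\bZ_p$. This completes the induction, hence $F\in 1+\mathfrak{m}\bZ_p[[X]]$.

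The crux — the point at which a naive argument stalls — is precisely this cancellation: tracking only total degrees, the comparison above yields no better than $\ord_p(a_I)\ge -1$, and one must invoke the Frobenius congruence for the polynomial $F_{<N}$ so that the diagonal contribution $a_{I/p}$, present on both sides, drops out; everything else (the $\mathfrak{m}$-adic truncation and the binomial expansion of $F(X)^p$) is routine bookkeeping. As an alternative, one can deduce the $n$-variable statement from the one-variable lemma above by specializing $X_i\mapsto t^{d_i}$ over positive integers $d_i$: a series in $\bQ_p[[X]]$ has all its coefficients in $\bZ_p$ iff all such specializations lie in $\bZ_p[[t]]$ — for the lowest total degree at which $p$-integrality fails, one chooses the $d_i$ mutually close enough that higher-degree terms cannot contribute to the same power of $t$, and generic enough that the relevant homogeneous monomials receive distinct $t$-degrees — and this specialization commutes with $F\mapsto F(X^p)/F(X)^p$ since $X_i^p\mapsto(t^p)^{d_i}$.
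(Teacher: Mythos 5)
Your proof is correct. The paper gives no argument for this lemma at all --- it cites Krattenthaler--Rivoal and remarks only that the generalization from one variable is straightforward --- and your strong induction on total degree, using the Frobenius congruence $H(X)^p\equiv H(X^p)\pmod{p\bZ_p[[X]]}$ for $H\in\bZ_p[[X]]$ together with the cancellation of the diagonal term $a_{I/p}$ in the case $p\mid I$, is exactly the standard way that generalization is carried out; both directions, including the intersection $p\bZ_p[[X]]\cap\mathfrak{m}\bZ_p[[X]]=p\,\mathfrak{m}\bZ_p[[X]]$ in the only-if part, check out.
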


\begin{lemma} \label{lm:Dwork2}
\cite{Kra-Riv2}
Write $\bX=X_1, \dots, X_n$.
Let $f(\bX) \in \sum_{i=1}^n X_i\bQ_p[[\bX]]$.
Then $e^{f(\bX)} \in 1 + \sum_{i=1}^n X_i\bZ_p[[\bX]]$ iff
$f(\bX^p) - pf(\bX) \in p \sum_{i=1}^n X_i\bZ_p[[\bX]]$,
where $\bX^p =X_1^p, \dots, X_n^p$.
\end{lemma}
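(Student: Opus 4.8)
The final statement to prove is Lemma \ref{lm:Dwork2}, the multivariate analog of Dwork's exponential lemma:

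\medskip

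The plan is to reduce Lemma \ref{lm:Dwork2} to the multivariate version of the "$F(X^p)/F(X)^p$" criterion stated immediately before it (the multivariate generalization of the first Dwork lemma, attributed to Kra-Riv), by taking $F(\bX) = e^{f(\bX)}$. First I would observe that since $f(\bX) \in \sum_{i=1}^n X_i \bQ_p[[\bX]]$ has zero constant term, $e^{f(\bX)}$ is a well-defined element of $1 + \sum_{i=1}^n X_i \bQ_p[[\bX]]$, so the hypothesis of that criterion applies. By the criterion, $e^{f(\bX)} \in 1 + \sum_{i=1}^n X_i \bZ_p[[\bX]]$ if and only if
\[
\frac{e^{f(\bX^p)}}{e^{pf(\bX)}} = e^{f(\bX^p) - p f(\bX)} \in 1 + p \sum_{i=1}^n X_i \bZ_p[[\bX]].
\]
So the lemma will follow once I show: for $g(\bX) \in \sum_{i=1}^n X_i \bQ_p[[\bX]]$, one has $e^{g(\bX)} \in 1 + p\sum_{i=1}^n X_i \bZ_p[[\bX]]$ if and only if $g(\bX) \in p \sum_{i=1}^n X_i \bZ_p[[\bX]]$; applying this with $g = f(\bX^p) - p f(\bX)$ then gives exactly the stated equivalence (noting $f(\bX^p) - pf(\bX) \in p\sum X_i \bZ_p[[\bX]]$ is the asserted condition).

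\medskip

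For that last equivalence I would argue coefficient-by-coefficient using the formal logarithm and exponential. The direction "$g \in p\sum X_i\bZ_p[[\bX]] \Rightarrow e^g \in 1 + p\sum X_i\bZ_p[[\bX]]$" is immediate from the series $e^g = \sum_{k\ge 0} g^k/k!$: each term $g^k/k!$ for $k \ge 1$ lies in $p^k/k! \cdot \sum X_i\bZ_p[[\bX]]$, and $\ord_p(p^k/k!) = k - \ord_p(k!) = k - (k - S_p(k))/(p-1) \ge k - (k-1)/(p-1) \ge 1$ for $k \ge 1$ (with equality forced only in small cases, but positivity always holds), so every term beyond the constant $1$ has coefficients in $p\bZ_p$. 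Conversely, if $e^{g} = 1 + p h$ with $h \in \sum X_i \bZ_p[[\bX]]$, then $g = \log(1 + ph) = \sum_{k \ge 1} (-1)^{k-1} (ph)^k/k$, and $\ord_p(p^k/k) = k - \ord_p(k) \ge 1$ for all $k \ge 1$, so $g \in p\sum X_i\bZ_p[[\bX]]$. This is the genuinely multivariate-flavored step, but it is routine power-series manipulation once one knows all operations ($\exp$, $\log$, substitution $\bX \mapsto \bX^p$) are well-defined on $\sum X_i \bQ_p[[\bX]]$ because the arguments have vanishing constant term.

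\medskip

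The main obstacle — really the only thing requiring care — is making sure the formal analytic operations are legitimate in the multivariate setting: $\exp$ and $\log$ applied to series with zero constant term converge coefficientwise in $\bQ_p[[\bX]]$, and the bookkeeping $\ord_p(p^k/k!) \ge 1$, $\ord_p(p^k/k) \ge 1$ for $k \ge 1$ is uniform across all monomials so the conclusions hold coefficientwise. Alternatively, if one wants to avoid re-deriving this, one can cite Lemma \ref{lm:Dwork1} (the one-variable version, already stated) and reduce the multivariate case to it by a standard specialization/grading argument — restricting to the diagonal $X_i = T^{w_i}$ for suitable weight vectors, or checking each homogeneous component — but the direct argument above is cleaner and self-contained. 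I would therefore present the direct proof, keeping the verification of $p$-divisibility of $p^k/k!$ and $p^k/k$ as the one explicit computation.
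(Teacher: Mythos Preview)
Your argument is correct. The paper itself does not supply a proof of this lemma; it is simply quoted from \cite{Kra-Riv2} (just as the one-variable version, Lemma~\ref{lm:Dwork1}, is quoted from \cite{Kob, Lia-Yau}). What you have written is precisely the standard derivation: apply the multiplicative multivariate criterion (the lemma immediately preceding, from \cite{Kra-Riv}) to $F=e^{f}$, and then verify the auxiliary equivalence
\[
e^{g}\in 1+p\textstyle\sum_i X_i\,\bZ_p[[\bX]]
\quad\Longleftrightarrow\quad
g\in p\textstyle\sum_i X_i\,\bZ_p[[\bX]]
\]
using the exponential and logarithm series together with the elementary estimates $\ord_p(p^k/k!)\ge 1$ and $\ord_p(p^k/k)\ge 1$ for $k\ge 1$. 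Both estimates are checked correctly (for the first, $S_p(k)\ge 1$ gives $\ord_p(k!)\le (k-1)/(p-1)$, hence $k-\ord_p(k!)\ge k-(k-1)/(p-1)\ge 1$; for the second, $p^{\ord_p(k)}\le k$ forces $\ord_p(k)\le k-1$). Since $f$, and hence $g=f(\bX^p)-pf(\bX)$, has zero constant term, all the formal series manipulations ($\exp$, $\log$, composition) are legitimate coefficientwise in $\bQ_p[[\bX]]$, exactly as you note. There is nothing to add; this is the proof one finds in the cited references, carried over verbatim to several variables.
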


\subsection{Integrality of some formal power series}

\begin{theorem} \label{thm:Int1}
Let
$f_m(x) = \frac{1}{m!} \sum_{k=1}^\infty \frac{(mk)!}{(k!)^m} \frac{x^k}{k}$.
Then one has
 $\exp f_m(x) \in 1+x\bZ[[x]]$.
\end{theorem}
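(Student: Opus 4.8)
The plan is to apply Dwork's Lemma in the form of Lemma \ref{lm:Dwork1}: since $f_m(x) \in x\bQ_p[[x]]$ for every prime $p$, the statement $\exp f_m(x) \in 1 + x\bZ[[x]]$ is equivalent to $\exp f_m(x) \in 1 + x\bZ_p[[x]]$ for all $p$, which in turn is equivalent to $f_m(x^p) - p f_m(x) \in p x\bZ_p[[x]]$. So I would fix a prime $p$ and examine the coefficient of $x^k$ in $f_m(x^p) - p f_m(x)$.

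First I would write out the two contributions. The coefficient of $x^k$ in $p f_m(x)$ is $\frac{p}{m!}\frac{(mk)!}{(k!)^m}\frac{1}{k}$, and the coefficient of $x^k$ in $f_m(x^p)$ is nonzero only when $p \mid k$, say $k = pk'$, in which case it is $\frac{1}{m!}\frac{(mk')!}{(k'!)^m}\frac{1}{k'}$. So I must show two things: (i) for every $k$, $\frac{p}{m!}\frac{(mk)!}{(k!)^m}\frac{1}{k} \in p x\bZ_p[[x]]$-coefficient sense, i.e. $\frac{1}{m!\cdot k}\binom{mk}{k,\dots,k} \in \bZ_p$; and (ii) when $p \mid k$, writing $k = pk'$, the difference $\frac{1}{m!}\left(\frac{1}{k'}\binom{mk'}{k',\dots,k'} - \frac{p}{k}\binom{mk}{k,\dots,k}\right)$ — wait, more precisely $\frac{1}{m! k'}\binom{mk'}{k',\dots,k'} - \frac{p}{m! k}\binom{mk}{k,\dots,k} = \frac{1}{m! k'}\left(\binom{mk'}{k',\dots,k'} - \binom{mk}{k,\dots,k}\right)$ since $p/k = 1/k'$ — lies in $p\bZ_p$, equivalently $\frac{1}{p}\cdot\frac{1}{m! k'}\left(\binom{m(pk')}{pk',\dots,pk'} - \binom{mk'}{k',\dots,k'}\right) \in \bZ_p$.

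For part (i), when $S_p(k) > 1$ this is exactly the second statement of Proposition \ref{prop:4}. When $S_p(k) = 1$, i.e. $k = p^s$ for some $s \geq 0$, one checks directly: $\ord_p\left(\frac{1}{m!\, p^s}\binom{mp^s}{p^s,\dots,p^s}\right) = \frac{1}{p-1}(mS_p(p^s) - S_p(mp^s) + S_p(m) - m) - s = \frac{1}{p-1}(m - S_p(m) + S_p(m) - m) - s = -s$, so I would instead need to absorb the extra factor of $p$ from $p f_m(x)$: indeed the relevant quantity for Dwork is $\frac{p}{m!\,k}\binom{mk}{k,\dots,k}$, which when $k = p^s$ has $\ord_p = 1 - s$; this is $\geq 0$ only for $s \leq 1$, so the genuinely delicate cases are $k = p^s$ with $s \geq 2$, and there one must use the cancellation against the $f_m(x^p)$ term, i.e. part (ii) again with $k' = p^{s-1}$. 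For part (ii), the cleanest route is Proposition \ref{prop:6}: applied with the $n$ there equal to $m$ here and with "$n$" there equal to $k$ here, it gives precisely $\frac{1}{m!\cdot k}\left(\binom{mk}{k,\dots,k} - \binom{mk/p}{k/p,\dots,k/p}\right) \in \bZ_p$ whenever $p \mid k$, which rearranges to what is needed (note $\frac{1}{m!k}(\cdots) = \frac{1}{p}\cdot\frac{1}{m!k'}(\cdots)$ after substituting $k = pk'$).

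The main obstacle I anticipate is the bookkeeping needed to combine (i) and (ii) correctly in the case $p \mid k$: one must verify that the two separately-non-integral pieces (the coefficient from $p f_m$ with its single power of $p$, and the coefficient from $f_m(x^p)$) combine, after the substitution $k = pk'$ and using $p/k = 1/k'$, into exactly the expression $\frac{1}{p}\cdot\frac{1}{m! k'}\big(\binom{m p k'}{pk',\dots} - \binom{mk'}{k',\dots}\big)$ appearing in Proposition \ref{prop:6}, and that the residual factor of $\frac{1}{k'}$ or $\frac{1}{m!}$ is accounted for — this is where Proposition \ref{prop:6} is tailored exactly, but one should double-check that the hypothesis "$k$ divisible by $p$" of Proposition \ref{prop:6} (stated there for the variable they call $n$) is the only constraint and that no coprimality to $m$ is secretly needed. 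Once that matching is confirmed, the proof is just: for $p \nmid k$ use Proposition \ref{prop:4} (case $S_p(k) > 1$) or the direct check (case $k = p^0 = 1$, where $\ord_p = 0$ trivially), and for $p \mid k$ use Proposition \ref{prop:6}; assembling these over all $k$ gives $f_m(x^p) - p f_m(x) \in p x \bZ_p[[x]]$, hence by Lemma \ref{lm:Dwork1} the result.
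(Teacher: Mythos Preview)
Your overall strategy is the paper's: apply Lemma \ref{lm:Dwork1} and split into the cases $p\nmid k$ (handled by Proposition \ref{prop:4}) and $p\mid k$ (handled by Proposition \ref{prop:6}). The $p\mid k$ case is matched correctly to Proposition \ref{prop:6}.

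However, your justification of the $p\nmid k$ case contains a genuine error. You write that ``when $S_p(k)>1$ this is exactly the second statement of Proposition \ref{prop:4}'', but that statement asserts $\frac{1}{m\cdot m!}\binom{mk}{k,\dots,k}\in\bZ_p$ --- division by $m$, not by $k$ --- so it does not give $\frac{1}{m!\,k}\binom{mk}{k,\dots,k}\in\bZ_p$. The correct (and simpler) argument, which is what the paper uses, is: the \emph{first} part of Proposition \ref{prop:4} gives $\frac{1}{m!}\binom{mk}{k,\dots,k}\in\bZ_p$, and when $(k,p)=1$ the factor $k$ is a $p$-adic unit, so dividing by $k$ preserves integrality. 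Relatedly, your middle paragraph initially asserts (i) for \emph{every} $k$, and then tries to salvage this via the $S_p(k)>1$ case of Proposition \ref{prop:4}; but this fails too: for instance $m=2$, $p=2$, $k=12$ has $S_2(12)=2>1$ yet $\ord_2\!\bigl(\frac{1}{2!\cdot 12}\binom{24}{12}\bigr)=2-3=-1<0$. So (i) must be restricted to $p\nmid k$ from the outset, as the paper does, and then the unit argument suffices. With this fix your proof coincides with the paper's.
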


\begin{proof}
By Lemma \ref{lm:Dwork1},
it suffices to check that for any prime number $p$,
we have
\ben
&& f_m(x^p) -p f_m(x) \in px \bZ_p[[x]].
\een
Equivalently,
we need to check
\be \label{eqn:1}
\frac{1}{m!k} \binom{mk}{k,\dots,k} \in \bZ_p, \qquad \text{when $(k,p) = 1$},
\ee
and for $(a, p) = 1$, $r \geq 1$,
\be \label{eqn:2}
\frac{1}{m!p^{r-1}a} \binom{mp^ra}{p^ra, \dots, p^ra} -
\frac{1}{m!p^{r-1}a} \binom{mp^{r-1}a}{p^{r-1}a, \dots, p^{r-1}a} \in \bZ_p.
\ee
They are guaranteed by Proposition \ref{prop:4}  and Proposition \ref{prop:6} respectively.
\end{proof}

\begin{theorem} \label{thm:Int2}
Let
$f_m(x_1, x_2) = \frac{1}{m!} \sum_{k_1+k_2 \geq 1} \frac{(m(k_1+k_2))!}{(k_1!)^m(k_2!)^m}
\frac{x_1^{k_1}x_2^{k_2}}{k_1+k_2}$.
Then one has $\exp f_m(x_1, x_2) \in 1+x_1\bZ[[x_1,x_2]] + x_2\bZ[[x_1,x_2]]$.
\end{theorem}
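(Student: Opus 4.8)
The plan is to mimic the proof of Theorem~\ref{thm:Int1} exactly, using the two-variable version of Dwork's Lemma (Lemma~\ref{lm:Dwork2}) in place of the one-variable version. Writing $\bX = x_1, x_2$, we have $f_m(\bX) \in \sum_{i=1}^2 x_i\bQ_p[[\bX]]$, so by Lemma~\ref{lm:Dwork2} it suffices to verify that $f_m(\bX^p) - p f_m(\bX) \in p\sum_{i=1}^2 x_i\bZ_p[[\bX]]$ for every prime $p$. Comparing coefficients of $x_1^{k_1}x_2^{k_2}$, there are two cases. When $(p, k_1, k_2) = 1$, the monomial $x_1^{k_1}x_2^{k_2}$ does not arise from $f_m(\bX^p)$, so we need
\be
\frac{1}{m!\,(k_1+k_2)} \binom{m(k_1+k_2)}{k_1,\dots,k_1,k_2,\dots,k_2} \in \bZ_p.
\ee
When $p \mid k_1$ and $p \mid k_2$, say $k_i = p \tilde k_i$, the coefficient of $x_1^{k_1}x_2^{k_2}$ in $f_m(\bX^p)$ is $\frac{1}{m!\,(\tilde k_1+\tilde k_2)}\binom{m(\tilde k_1+\tilde k_2)}{\tilde k_1,\dots,\tilde k_2,\dots}$, and after clearing the common factor we need
\be
\frac{1}{m!\,(k_1+k_2)}\biggl( \binom{m(k_1+k_2)}{k_1,\dots,k_2,\dots} - \binom{m(k_1+k_2)/p}{k_1/p,\dots,k_2/p,\dots} \biggr) \in \bZ_p.
\ee

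These two statements are precisely Proposition~\ref{prop:7} and Proposition~\ref{prop:9}, applied with the roles of $k$ and $l$ played by $k_1$ and $k_2$. (Note that the argument in the $p\mid k_i$ case must also absorb the $1/p$ coming from $f_m(\bX^p) - pf_m(\bX)$ relative to the denominator $k_1+k_2 = p(\tilde k_1 + \tilde k_2)$; this is exactly the extra factor $1/(k+l)$ rather than $1/((k+l)/p)$ that appears in Proposition~\ref{prop:9}, so the bookkeeping matches.) Thus the proof reduces to citing these two propositions, just as Theorem~\ref{thm:Int1} reduces to Proposition~\ref{prop:4} and Proposition~\ref{prop:6}.

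The main obstacle is purely organizational rather than mathematical: one must be careful that the congruence $f_m(\bX^p)\equiv pf_m(\bX)$ really does decompose into exactly these two coefficient conditions, i.e.\ that no monomial is missed and that the $p=2$ subtleties flagged in Proposition~\ref{prop:8} and Proposition~\ref{prop:9} are already fully handled inside the statement of Proposition~\ref{prop:9} (they are — Proposition~\ref{prop:9} is stated unconditionally in $m,k,l$). Since $(p,k_1,k_2)=1$ and $p\mid k_1,k_2$ together with "$k_1+k_2\geq 1$" exhaust all index pairs, and the two propositions cover both cases over all primes, the verification is complete. I would therefore present this as a short proof: invoke Lemma~\ref{lm:Dwork2}, reduce to the two displayed divisibility conditions by comparing coefficients, and cite Proposition~\ref{prop:7} and Proposition~\ref{prop:9}.
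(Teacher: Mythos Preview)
Your proposal is correct and is essentially identical to the paper's own proof, which is the one-line citation ``It follows from Lemma~\ref{lm:Dwork2}, Proposition~\ref{prop:7} and Proposition~\ref{prop:9}.'' Your more detailed coefficient-by-coefficient unpacking (the $(p,k_1,k_2)=1$ case versus the $p\mid k_1,k_2$ case, with the bookkeeping of the extra $1/p$) is exactly the computation that justifies that citation.
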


\begin{proof}
It follows from Lemma \ref{lm:Dwork2},
Proposition \ref{prop:7} and Proposition \ref{prop:9}.
\end{proof}

\begin{theorem}
For any positive rational integer $n$,
we have
\be
\exp \bigg( \sum_{k_1+ \cdots +k_n \geq 1} \frac{(m\sum_{i=1}^n k_i)!}{\prod_{i=1}^n (k_i!)^m}
\frac{\prod_{i=1}^n x_i^{k_i}}{\sum_{i=1}^n k_i} \biggr) \in \bZ[[x_1, \dots, x_n]].
\ee
\end{theorem}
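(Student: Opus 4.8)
The plan is to follow, now in $n$ variables, the strategy used for Theorem~\ref{thm:Int1} and Theorem~\ref{thm:Int2}. Set
\be
f(x_1,\dots,x_n)=\frac{1}{m!}\sum_{k_1+\cdots+k_n\geq 1}\frac{(m\sum_{i=1}^n k_i)!}{\prod_{i=1}^n (k_i!)^m}\,\frac{\prod_{i=1}^n x_i^{k_i}}{\sum_{i=1}^n k_i},
\ee
which lies in $\sum_{i=1}^n x_i\bQ_p[[x_1,\dots,x_n]]$, so that the series in the theorem is $\exp f$; here we have included the normalizing factor $\tfrac{1}{m!}$ of Theorems~\ref{thm:Int1} and~\ref{thm:Int2}, and if it is omitted the argument below only becomes easier. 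By Lemma~\ref{lm:Dwork2} it suffices to show, for every prime $p$, that
\be
f(x_1^p,\dots,x_n^p)-p\,f(x_1,\dots,x_n)\in p\sum_{i=1}^n x_i\bZ_p[[x_1,\dots,x_n]];
\ee
then $\exp f\in 1+\sum_{i=1}^n x_i\bZ_p[[x_1,\dots,x_n]]$ for every $p$, and since $\bZ=\bigcap_p\bZ_p$ we get $\exp f\in\bZ[[x_1,\dots,x_n]]$.

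First I would carry out the substitution $x_i\mapsto x_i^p$ and reindex by $k_i\mapsto k_i/p$: using $\tfrac{1}{\sum_i(k_i/p)}=\tfrac{p}{\sum_i k_i}$, this shows that $f(x_1^p,\dots,x_n^p)$ involves only the monomials $\prod_i x_i^{k_i}$ with $p\mid k_i$ for every $i$, the coefficient of such a monomial being $\tfrac{p}{m!\sum_i k_i}\cdot\tfrac{(m\sum_i k_i/p)!}{\prod_i((k_i/p)!)^m}$. Hence $f(x_1^p,\dots,x_n^p)-p\,f(x_1,\dots,x_n)$ splits into two families of monomials $\prod_i x_i^{k_i}$: the family with $p\mid k_i$ for all $i$, of coefficient $\tfrac{p}{m!\sum_i k_i}\bigl(\tfrac{(m\sum_i k_i/p)!}{\prod_i((k_i/p)!)^m}-\tfrac{(m\sum_i k_i)!}{\prod_i(k_i!)^m}\bigr)$, and the family with $(p,k_1,\dots,k_n)=1$, of coefficient $-\tfrac{p}{m!\sum_i k_i}\cdot\tfrac{(m\sum_i k_i)!}{\prod_i(k_i!)^m}$. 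In either case the point is to show that the coefficient lies in $p\bZ_p$, i.e.\ that $\tfrac{1}{\sum_i k_i}$ times the relevant combination of multinomials lies in $\bZ_p$; and since indices with $k_i=0$ contribute trivially to the multinomials and to $\sum_i k_i$, one may assume all $k_i>0$, so that Propositions~\ref{prop:Conj2} and~\ref{prop:Conj3} are applicable.

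For the second family, $(p,k_1,\dots,k_n)=1$ means that $(k_1,\dots,k_n)$ is a $p$-adic unit, so Proposition~\ref{prop:Conj2} gives $\tfrac{1}{m!(\sum_i k_i)^m}\tfrac{(m\sum_i k_i)!}{\prod_i(k_i!)^m}\in\bZ_p$; multiplying by the integer $(\sum_i k_i)^{m-1}$ (legitimate since $m\geq 1$) yields $\tfrac{1}{m!\sum_i k_i}\tfrac{(m\sum_i k_i)!}{\prod_i(k_i!)^m}\in\bZ_p$, whence the second-family coefficient, being $-p$ times this, lies in $p\bZ_p$. For the first family, write $k_i=p\ell_i$ and apply Proposition~\ref{prop:Conj3} with the $\ell_i$ in place of the $k_i$; using $\sum_i\ell_i=\tfrac1p\sum_i k_i$ this reads $\tfrac{p^{m-1}}{m!(\sum_i k_i)^m}\bigl(\tfrac{(m\sum_i k_i)!}{\prod_i(k_i!)^m}-\tfrac{(m\sum_i k_i/p)!}{\prod_i((k_i/p)!)^m}\bigr)\in\bZ_p$. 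Multiplying by the integer $(\sum_i\ell_i)^{m-1}=\bigl(\tfrac1p\sum_i k_i\bigr)^{m-1}$ cancels $p^{m-1}$ against $m-1$ of the $m$ factors $\sum_i k_i$, leaving $\tfrac{1}{m!\sum_i k_i}\bigl(\tfrac{(m\sum_i k_i)!}{\prod_i(k_i!)^m}-\tfrac{(m\sum_i k_i/p)!}{\prod_i((k_i/p)!)^m}\bigr)\in\bZ_p$; since the first-family coefficient equals $-p$ times this quantity, it lies in $p\bZ_p$. This finishes both cases, and hence the reduction.

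The argument is essentially mechanical, and I do not anticipate a serious obstacle. The only step that calls for care is the passage from the power $(\sum_i k_i)^m$ present in Propositions~\ref{prop:Conj2} and~\ref{prop:Conj3} to the single power $\sum_i k_i$ occurring in the statement; this goes through because $m\geq 1$ and, in the first family, because $p$ divides every $k_i$ and hence $\sum_i k_i$, so that $\tfrac1p\sum_i k_i$ is a genuine positive integer whose $(m-1)$-th power is available to absorb the surplus factor $p^{m-1}$. With this bookkeeping the whole theorem reduces to Lemma~\ref{lm:Dwork2} together with Propositions~\ref{prop:Conj2} and~\ref{prop:Conj3}, in exact parallel with the way Theorems~\ref{thm:Int1} and~\ref{thm:Int2} reduce to their congruence propositions.
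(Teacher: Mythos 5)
Your proof is correct, but it reaches the result by a different pair of congruence propositions than the paper does, so a comparison is in order. The paper deduces this theorem from Lemma~\ref{lm:Dwork2} together with Propositions~\ref{prop:1} and~\ref{prop:3}: one views $\frac{(m\sum_i k_i)!}{\prod_i (k_i!)^m}$ as the multinomial coefficient over the $mn$-tuple in which each $k_i$ is repeated $m$ times, so those propositions give $\frac{1}{m\sum_i k_i}\frac{(m\sum_i k_i)!}{\prod_i(k_i!)^m}\in\bZ_p$ when $(p,k_1,\dots,k_n)=1$, and the analogous statement for the difference when $p$ divides every $k_i$; multiplying by the integer $m$ then produces exactly the single factor $\frac{1}{\sum_i k_i}$ that Dwork's criterion requires, with no $m!$ and no higher powers of $\sum_i k_i$ ever appearing. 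You instead prove the stronger statement in which the exponent carries the extra factor $\frac{1}{m!}$ (this is the paper's final theorem), invoking Propositions~\ref{prop:Conj2} and~\ref{prop:Conj3}, and you then have to trade the power $(\sum_i k_i)^m$ occurring there for a single $\sum_i k_i$; your bookkeeping with $(\sum_i\ell_i)^{m-1}=(\tfrac1p\sum_i k_i)^{m-1}$ is valid, and the theorem as stated does follow --- most cleanly because $\exp(m!f)=(\exp f)^{m!}\in 1+\sum_i x_i\bZ[[x_1,\dots,x_n]]$ whenever $\exp f$ is, or, as you indicate, because multiplying your coefficient estimates by $m!\in\bZ$ only helps. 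So your route costs a little extra arithmetic but yields the sharper $\frac{1}{m!}$-normalized integrality as a byproduct. The one sentence you should repair is ``the series in the theorem is $\exp f$'': with your $f$ the theorem's series is $(\exp f)^{m!}$, not $\exp f$, and the remark about omitting $\frac{1}{m!}$ deserves to be made explicit rather than left as ``only becomes easier.''
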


\begin{proof}
It follows from Lemma \ref{lm:Dwork2},
Proposition \ref{prop:1} and Proposition \ref{prop:3}.
\end{proof}

\begin{theorem} \label{thm:}
For any positive rational integers $k_1, \dots, k_n$,
we have
\be
\exp \bigg( \sum_{m \geq 1} \frac{(\sum_{i=1}^n k_im)!}{\prod_{i=1}^n (k_im)!}
\frac{x^m}{m} \biggr) \in \bZ[[x]].
\ee
If we have furthermore,
$(k_1, \dots, k_n) = 1$,
then we have
\be
\exp \bigg( \sum_{m \geq 1} \frac{(\sum_{i=1}^n k_im)!}{\prod_{i=1}^n (k_im)!}
\frac{x^m}{\sum_{i=1}^n k_im} \biggr) \in \bZ[[x]].
\ee
\end{theorem}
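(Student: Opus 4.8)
The plan is to apply Dwork's Lemma in the form of Lemma \ref{lm:Dwork1}, prime by prime, just as in the proof of Theorem \ref{thm:Int1}. Fix a prime $p$ and write
\[
f(x) = \sum_{m \geq 1} \frac{1}{m} \binom{\sum_{i=1}^n k_i m}{k_1 m, \dots, k_n m}\, x^m
\]
for the first series. By Lemma \ref{lm:Dwork1}, $\exp f(x) \in 1 + x\bZ_p[[x]]$ if and only if $f(x^p) - p f(x) \in px\bZ_p[[x]]$. Substituting $m \mapsto pm$ in the series for $f(x^p)$, one sees that the coefficient of $x^m$ in $f(x^p) - p f(x)$ is $-\frac{p}{m}\binom{\sum_{i=1}^n k_i m}{k_1 m, \dots, k_n m}$ when $p \nmid m$, and is $\frac{p}{m}\bigl(\binom{\sum_{i=1}^n k_i m/p}{k_1 m/p, \dots, k_n m/p} - \binom{\sum_{i=1}^n k_i m}{k_1 m, \dots, k_n m}\bigr)$ when $p \mid m$. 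So the required membership reduces to the two congruences
\[
\frac{1}{m}\binom{\sum_{i=1}^n k_i m}{k_1 m, \dots, k_n m} \in \bZ_p \quad (p \nmid m)
\]
and
\[
\frac{1}{m}\Bigl(\binom{\sum_{i=1}^n k_i m}{k_1 m, \dots, k_n m} - \binom{\sum_{i=1}^n k_i m/p}{k_1 m/p, \dots, k_n m/p}\Bigr) \in \bZ_p \quad (p \mid m).
\]

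The first congruence is Proposition \ref{prop:1.1} (and is trivial anyway, since $m$ is a $p$-adic unit when $p\nmid m$). For the second, Proposition \ref{prop:3.1} asserts that $\frac{1}{\sum_{i=1}^n k_i m}$ times the displayed difference lies in $\bZ_p$; multiplying by the positive integer $\sum_{i=1}^n k_i$ and using $\frac1m = \frac{\sum_{i=1}^n k_i}{\sum_{i=1}^n k_i m}$ yields exactly what is needed. Since this holds for every prime $p$, $\exp f(x) \in 1 + x\bZ[[x]]$, which is the first assertion.

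For the second assertion I would argue identically with $g(x) = \sum_{m \geq 1} \frac{1}{\sum_{i=1}^n k_i m}\binom{\sum_{i=1}^n k_i m}{k_1 m, \dots, k_n m}\, x^m$: the substitution $m\mapsto pm$ shows the coefficient of $x^m$ in $g(x^p) - p g(x)$ is $-\frac{p}{\sum_{i=1}^n k_i m}\binom{\sum_{i=1}^n k_i m}{k_1 m, \dots, k_n m}$ when $p\nmid m$ and $\frac{p}{\sum_{i=1}^n k_i m}\bigl(\binom{\sum_{i=1}^n k_i m/p}{k_1 m/p, \dots, k_n m/p} - \binom{\sum_{i=1}^n k_i m}{k_1 m, \dots, k_n m}\bigr)$ when $p\mid m$. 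The case $p\mid m$ is again Proposition \ref{prop:3.1}. The case $p\nmid m$ asks for $\frac{1}{\sum_{i=1}^n k_i m}\binom{\sum_{i=1}^n k_i m}{k_1 m, \dots, k_n m}\in\bZ_p$, and here the hypothesis $(k_1, \dots, k_n) = 1$ guarantees $(p, k_1, \dots, k_n) = 1$ for every prime $p$, so this is precisely Proposition \ref{prop:1.2} (with the integer $m$ there taken prime to $p$). Lemma \ref{lm:Dwork1} then gives $\exp g(x) \in \bZ[[x]]$.

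With the congruences of Section 3 available, the theorem is essentially bookkeeping, so I do not expect a real obstacle; the points that require a little care are: correctly matching each of the two coefficient types to the right proposition; passing from the $\frac{1}{\sum_{i=1}^n k_i m}$-normalization of Proposition \ref{prop:3.1} to the $\frac1m$-normalization needed for the first series (harmless, as $\sum_{i=1}^n k_i\in\bZ$); and recognizing that the global condition $(k_1,\dots,k_n)=1$ is exactly the input needed for Proposition \ref{prop:1.2} in the range $p\nmid m$ for the second series. That this condition is genuinely necessary is clear already from $n=1$, $k_1=p$: there $g(x)=\sum_{m\geq1}\frac{x^m}{pm}$, whose $x$-coefficient $\frac1p$ is not an integer, so $\exp g(x)\notin\bZ[[x]]$.
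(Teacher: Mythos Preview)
Your argument is correct and matches the paper's own proof essentially line for line: Dwork's Lemma \ref{lm:Dwork1} together with Propositions \ref{prop:1.1} and \ref{prop:3.1} for the first assertion, and Propositions \ref{prop:1.2} and \ref{prop:3.1} for the second. The only details you add beyond the paper's terse citation of these results are the explicit coefficient computation, the harmless passage from the $\frac{1}{\sum_i k_i m}$ normalization of Proposition \ref{prop:3.1} to $\frac{1}{m}$ via multiplication by $\sum_i k_i$, and the necessity remark at the end.
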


\begin{proof}
The first assertion follows from Lemma \ref{lm:Dwork1},
Proposition \ref{prop:1.1} and Proposition \ref{prop:3.1}.
The second assertion follows from Lemma \ref{lm:Dwork1}, Proposition \ref{prop:1.2} and Proposition \ref{prop:3.1}.
\end{proof}

\begin{theorem}
For any positive rational integers $m$ and $n$,
we have
\be
\exp \biggl(\frac{1}{m!} \sum_{k_1+ \cdots +k_n \geq 1} \frac{(m\sum_{i=1}^n k_i)!}{\prod_{i=1}^n (k_i!)^m}
\frac{\prod_{i=1}^n x_i^{k_i}}{\sum_{i=1}^n k_i} \biggr) \in \bZ[[x_1, \dots, x_n]].
\ee
\end{theorem}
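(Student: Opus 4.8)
The plan is to combine Dwork's Lemma in the multivariate form, Lemma \ref{lm:Dwork2}, with the congruence properties already established for the relevant multinomial numbers. Write $f(x_1,\dots,x_n) = \frac{1}{m!} \sum_{k_1+\cdots+k_n \geq 1} \frac{(m\sum_{i=1}^n k_i)!}{\prod_{i=1}^n (k_i!)^m} \frac{\prod_{i=1}^n x_i^{k_i}}{\sum_{i=1}^n k_i}$. By Lemma \ref{lm:Dwork2}, the assertion $\exp f \in 1 + \sum_i x_i \bZ_p[[\bX]]$ is equivalent, for each prime $p$, to $f(\bX^p) - p f(\bX) \in p \sum_i x_i \bZ_p[[\bX]]$, i.e.\ to the statement that every coefficient of $f(\bX^p) - p f(\bX)$ lies in $p\bZ_p$.

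The key step is then to split the coefficient comparison by whether or not $p$ divides the index vector. First, for a monomial $\prod_i x_i^{k_i}$ with $(p, k_1, \dots, k_n) = 1$, it appears only in $-p f(\bX)$, and one needs
\be
\frac{1}{m! \cdot \sum_{i=1}^n k_i} \binom{m \sum_{i=1}^n k_i}{k_1, \dots, k_1, \dots, k_n, \dots, k_n} \in \bZ_p,
\ee
where each $k_i$ is repeated $m$ times. This is supplied by Proposition \ref{prop:Conj2} (the factor $(k_1, \dots, k_n)^m$ there is a $p$-adic unit since $(p, k_1, \dots, k_n)=1$, so it can be divided out). Second, for a monomial $\prod_i x_i^{p k_i}$ with all indices divisible by $p$, it appears in both $f(\bX^p)$ (as the coefficient of $\prod_i (x_i^p)^{k_i}$) and in $-p f(\bX)$, so one needs
\be
\frac{1}{m! \cdot \sum_{i=1}^n p k_i} \biggl( \binom{m \sum_i p k_i}{pk_1, \dots, pk_n} - \binom{m \sum_i k_i}{k_1, \dots, k_n} \biggr) \in \bZ_p,
\ee
again with each block repeated $m$ times; this is exactly Proposition \ref{prop:Conj3}. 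Monomials $\prod_i x_i^{l_i}$ where $p$ divides some but not all $l_i$ do not occur in $f(\bX^p)$ and satisfy $(p, l_1, \dots, l_n) = 1$, so they fall under the first case. Finally, one observes the integrality of all the rational coefficients of $f$ themselves (so $f \in \sum_i x_i \bQ_p[[\bX]]$ makes sense and the conclusion can be upgraded from $\bZ_p[[\bX]]$ for all $p$ to $\bZ[[\bX]]$), which follows from Proposition \ref{prop:Conj2} as well.

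The main obstacle is purely bookkeeping rather than conceptual: one must be careful that the multinomial coefficient $\binom{m\sum k_i}{k_1,\dots,k_1,\dots,k_n,\dots,k_n}$ with each $k_i$ repeated $m$ times is the object controlled by Propositions \ref{prop:Conj2} and \ref{prop:Conj3} — in particular that $\prod_{i=1}^n (k_i!)^m$ in the denominator of $f$ matches $\prod$ over the $mn$ blocks — and that the $p$-adic valuation of the prefactor $\frac{(k_1,\dots,k_n)^m}{m!\,(\sum k_i)^m}$ is nonnegative enough to absorb the discrepancy in Proposition \ref{prop:Conj3}, where dividing by the extra $p$ is compensated by $\ord_p((k_1,\dots,k_n)^m) = mr$. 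Once the dictionary between the monomials of $f(\bX^p) - pf(\bX)$ and these two propositions is set up, the proof is a one-line invocation of Lemma \ref{lm:Dwork2}.
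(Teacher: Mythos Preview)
Your proposal is correct and follows exactly the paper's approach, which is simply to invoke Lemma~\ref{lm:Dwork2} together with Propositions~\ref{prop:Conj2} and~\ref{prop:Conj3}; you have merely spelled out the coefficient-by-coefficient verification that the paper leaves implicit. One small remark: what you need is division by $\sum_i k_i$ (or $p\sum_i k_i$) while Propositions~\ref{prop:Conj2} and~\ref{prop:Conj3} give division by $(\sum_i k_i)^m$, so ``exactly'' is a slight overstatement---but since $\ord_p\bigl((\sum_i k_i)^{m-1}\bigr)\ge 0$, the propositions are in fact stronger and the implication is immediate.
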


\begin{proof}
It follows from Lemma \ref{lm:Dwork2},
Proposition \ref{prop:Conj2} and Proposition \ref{prop:Conj3}.
\end{proof}

\section{Integrality of Local Mirror Maps}

In this section we will establish the integrality property of
local mirror map for a noncompact Calabi-Yau $n$-manifold
under suitable conditions on the charge vectors that define it.
The reference for this section is \cite{CKYZ}.

\subsection{Charge vectors for local Calabi-Yau geometries}

Let $M$ be a noncompact toric Calabi-Yau $n$-fold.
It can be obtained by symplectic reduction of some torus action
on an affine space as follows.
Let $(\bC^*)^N$ act on $\bC^{N+n}$ as follows:
\be
(t_1, \dots, t_N) \cdot
(z_0, z_1, \dots, z_{N+n})
= (\prod_{i=1}^N t_i^{l^{(i)}_0} z_0, \dots, \prod_{i=1}^N t_i^{l^{(i)}_{N+n-1}} z_{N+n-1}).
\ee
The vectors
\be
l^{(i)} = (l^{(i)}_0, l^{(i)}_1, \dots, l^{(i)}_{N+n-1}) \in \bZ^{N+n}
\ee
will be referred to as the charge vectors of this torus action.
They satisfy the Calabi-Yau conditions:
\be \label{eqn:CY}
\sum_{j=0}^{N+n-1} l_j^{(i)} = 0, \qquad i =1, \dots, N.
\ee
The noncompact Calabi-Yau $n$-fold $M$ is realized as
$\bC^{N+n}//(\bC^*)^N$.

\subsection{Picard-Fuchs system and local mirror map}

Let $a_0, \dots, a_{N+n-1}$ be some variables and define
\be
z_i = \prod_{j=0}^{N+n-1} a_i^{l^{(i)}_j}, \qquad i = 0, 1, \dots, N.
\ee
They are understood as local coordinates on the  moduli space of complex structure on
the mirror manifold of $M$.
The extended Picard-Fuch system associated to the charge vectors
$l^{(i)}$, $i=1$, $\dots$, $N+n+1$ can be obtained as follows.
Consider the following system of equations:
\be \label{eqn:PF0}
\prod_{l^{(i)}_j > 0} \pd_{a_j}^{l^{(i)}_j} s
= \prod_{l^{(i)}_j<0} \pd_{a_j}^{-l^{(i)}_j} s, \qquad
i =0, \dots, N.
\ee
Assume that $s= s(z_1, \dots, z_N)$.
Then one can rewrite \eqref{eqn:PF0} as
\be \label{eqn:PF}
\prod_{l^{(i)}_j > 0} \prod_{a=1}^{l^{(i)}_j} (\sum_{k=0}^N l^{(k)}_j \theta_k - a) s
= z_i \cdot \prod_{l^{(i)}_j < 0} \prod_{b=0}^{-l^{(i)}_j-1} (\sum_{k=0}^N l^{(k)}_j \theta_k - a) s,
\ee
where $\theta_k = z_k \pd_{z_k}$.
By the Frobenius method,
one gets the following fundamental solution:
\be
\omega = \sum_{m_{1,\dots, N} \geq 0} c(\vec{m}, \vec{r})
 \cdot \prod_{i=1}^N z_i^{r_i+m_i},
\ee
where $\vec{m} = (m_1, \dots, m_N)$, $\vec{r} = (r_1, \dots, r_N)$, and
\be
c(\vec{m}, \vec{r}) =\prod_{j=0}^{N+n-1}
\frac{\Gamma(1+k_j(\vec{l}, \vec{r}))}{\Gamma(1+k_j(\vec{l}, \vec{r}+\vec{m}))}.
\ee
where $k_j(\vec{l}, \vec{m}) = \sum_{i=1}^N l^{(i)}_jm_i$.
Note
\be
\frac{\Gamma(1+x)}{\Gamma(1+m+x)}
= \begin{cases}
1, & m =0, \\
1/\prod_{a=1}^m (a+x), & m > 0, \\
\prod_{b=0}^{-m-1} (-b+x), & m < 0.
\end{cases}
\ee
It follows that
\ben
\frac{\Gamma(1+k_j(\vec{l}, \vec{r}))}{\Gamma(1+k_j(\vec{l}, \vec{r}+\vec{m}))}
= \begin{cases}
1, & k_j(\vec{l}, \vec{m}) =0, \\
1/\prod_{a=1}^{\sum_{i=1}^{N} l^{(i)}_j m_i} (a+k_j(\vec{l}, \vec{r})), & k_j(\vec{l}, \vec{m}) > 0, \\
\prod_{b=0}^{-\sum_{i=1}^{N} l^{(i)}_j m_i-1} (-b+k_j(\vec{l}, \vec{r})), & k_j(\vec{l}, \vec{m}) < 0,
\end{cases}
\een
and so
\ben
&& c(\vec{m}, \vec{r})
= \frac{\prod_{k_j(\vec{l}, \vec{m}) < 0 } \prod_{b=0}^{-k_j(\vec{l}, \vec{m})-1} (-b+k_j(\vec{l}, \vec{r}))}
{\prod_{k_j(\vec{l}, \vec{m}) > 0} \prod_{a=1}^{k_j(\vec{l}, \vec{m})} (a+k_j(\vec{l}, \vec{r}))}.
\een

For $\vec{m} = (m_1, \dots, m_N) \in \bZ_+^{N}$, where
$\bZ_+$ is the set of nonnegative rational integers,
define
\be
N(\vec{l}, \vec{m})
= |\{j \;|\; 0 \leq j \leq N+n-1, k_j(\vec{l}, \vec{m}) < 0 \}|.
\ee
for any positive rational integer $a$.
We will say the charge vectors $\vec{l}$ satisfy Condition (A) if
$N(\vec{l}, \vec{m}) = 0$ implies $\vec{m} = \vec{0}:=(0, \dots, 0)$.
Under this condition one has the following solutions to the Picard-Fuch system \eqref{eqn:PF}:
\bea
&& g_0 = \omega|_{\vec{r}=0} =1, \\
&& g_1^{(i)} = \pd_{r_i} \omega|_{\vec{r}=0} = \log z_i
+ \sum_{N(\vec{l}, \vec{m}) =1} l^{(i)}_{j_{\vec{l}, \vec{m}}}
\frac{(-1)^{k_{j_{\vec{l}, \vec{m}}}(\vec{l}, \vec{m})} (-k_{j_{\vec{l}, \vec{m}}}(\vec{l}, \vec{m}) -1)!}
{\prod_{j \neq j_{\vec{l}, \vec{m}}} k_j(\vec{l}, \vec{m})!} \vec{z}^{\vec{m}},
\eea
where
$\vec{z}^{\vec{m}} = \prod_{i=1}^N z_i^{m_i}$,
and for any $\vec{m}$ with $N(\vec{j}, \vec{m}) = 1$,
$j_{\vec{l}, \vec{m}}$ is the only number $j$ between $0$ and $N+n-1$ such that
$k_{j}(\vec{l}, \vec{m}) < 0$.

Note by the Calabi-Yau condition \eqref{eqn:CY},
one has
\be
-k_{j_{\vec{m}}}(\vec{l}, \vec{m})
= \sum_{j \neq j_{\vec{m}}} k_j(\vec{l}, \vec{m}).
\ee
Also note
\be
k_j(\vec{l}, a \cdot \vec{m}) =a \cdot  k_j(\vec{l}, \vec{m}), \qquad
N(\vec{l}, a \cdot \vec{m}) = N(\vec{l}, \vec{m}), \qquad
j_{\vec{l}, a \cdot \vec{m}} = j_{\vec{l}, \vec{m}},
\ee
for any positive rational integer $a$.
So we can rewrite $g_1^{(i)}$ as follows:
\be \label{eqn:g1(i)}
g_1^{(i)} = \log z_i
+ \sum_{\substack{N(\vec{l}, \vec{m}) =1 \\ \gcd(\vec{m})=1}} l^{(i)}_{j_{\vec{l},\vec{m}}}
\sum_{a=1}^\infty
\frac{(\sum_{j \neq j_{\vec{l},\vec{m}}} k_j(\vec{l}, \vec{m}) \cdot a -1)!}
{\prod_{j \neq j_{\vec{l},\vec{m}}} (k_j(\vec{l}, \vec{m}) \cdot a)!}
((-1)^{k_{j_{\vec{l},\vec{m}}}(\vec{l}, \vec{m})} \vec{z}^{\vec{m}})^a.
\ee
We will say the charge vectors $\vec{l}$ satisfy Condition (B) if
for every $\vec{m}$ such that $N(\vec{l}, \vec{m}) = 1$
and $\gcd(\vec{m}) = 1$ one has
\be
\gcd \{k_j(\vec{l}, \vec{m})\}_{j=0, \dots, N+n-1} = 1.
\ee

The local  mirror map is defined by
\be
q_i = \exp (g_1^{(i)}/g_0), \qquad i =1, \dots, N.
\ee

\begin{theorem}
Let $\vec{l}$ be charge vectors that satisfy Condition (A) and Condition (B),
then the associated local mirror map are given by integral series:
\be
q_i \in z_i \bZ[[z_1, \dots, z_N]], \qquad i=1, \dots, N.
\ee
\end{theorem}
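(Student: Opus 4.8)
The plan is to reduce the integrality of each $q_i = \exp(g_1^{(i)})$ (note $g_0 = 1$) to the integrality theorems for formal power series established in Section 4. First I would observe that $q_i = z_i \cdot \exp(h_i)$ where, from \eqref{eqn:g1(i)},
\[
h_i = \sum_{\substack{N(\vec{l},\vec{m})=1 \\ \gcd(\vec{m})=1}} l^{(i)}_{j_{\vec{l},\vec{m}}}
\sum_{a=1}^\infty \frac{(\sum_{j \neq j_{\vec{l},\vec{m}}} k_j(\vec{l},\vec{m})\cdot a - 1)!}{\prod_{j \neq j_{\vec{l},\vec{m}}} (k_j(\vec{l},\vec{m})\cdot a)!} \bigl((-1)^{k_{j_{\vec{l},\vec{m}}}(\vec{l},\vec{m})} \vec{z}^{\vec{m}}\bigr)^a,
\]
and that it therefore suffices to show $\exp(h_i) \in 1 + \sum_i z_i \bZ[[z_1,\dots,z_N]]$, since multiplying by the monomial $z_i$ preserves integrality. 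Because $\exp$ of a sum is a product of exponentials, and a product of integral series is integral, it is enough to handle one fixed $\vec{m}$ at a time: for each $\vec{m}$ with $N(\vec{l},\vec{m}) = 1$ and $\gcd(\vec{m}) = 1$, I must show that the inner sum over $a$, after exponentiation, lies in $1 + z\bZ[[z]]$ in the single variable $z = \pm\vec{z}^{\vec{m}}$.

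The key step is to recognize that for fixed such $\vec{m}$, writing $j_0 = j_{\vec{l},\vec{m}}$ and $k_j := k_j(\vec{l},\vec{m})$ for $j \neq j_0$ (all positive by the definition of $N(\vec{l},\vec{m}) = 1$ combined with Condition (A) and \eqref{eqn:CY}), the inner series is exactly
\[
\sum_{a=1}^\infty \frac{(\sum_{j \neq j_0} k_j \cdot a - 1)!}{\prod_{j \neq j_0}(k_j a)!}\, z^a
= \sum_{a=1}^\infty \frac{1}{\sum_{j\neq j_0} k_j a}\binom{\sum_{j\neq j_0} k_j a}{\{k_j a\}_{j \neq j_0}} z^a.
\]
Here Condition (B) guarantees $\gcd\{k_j\}_{j \neq j_0} = 1$ (the $j = j_0$ term of the full gcd is $-\sum_{j\neq j_0} k_j$, which does not affect the gcd of the positive entries). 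So this is precisely the series whose exponential is shown to be in $\bZ[[x]]$ by the second part of Theorem \ref{thm:} (taking $m = 1$ there, with the $k_j$'s playing the role of $k_1, \dots, k_n$ and using the hypothesis $\gcd = 1$), or alternatively by Lemma \ref{lm:Dwork1} together with Proposition \ref{prop:1.2} and Proposition \ref{prop:3.1}. The overall sign $(-1)^{k_{j_0}}$ and the integer prefactor $l^{(i)}_{j_0} \in \bZ$ are harmless: raising an integral series in $1 + z\bZ[[z]]$ to an integer power (possibly negative, since such series are units in $\bZ[[z]]$) stays integral, and substituting $z \mapsto -z$ preserves $\bZ$-coefficients.

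Assembling: $q_i = z_i \prod_{\vec{m}} \bigl(E_{\vec{m}}(\pm \vec{z}^{\vec{m}})\bigr)^{l^{(i)}_{j_{\vec{l},\vec{m}}}}$ where each $E_{\vec{m}} \in 1 + z\bZ[[z]]$, and the product converges $\vec{z}$-adically because each factor is $1 + O(\vec{z}^{\vec{m}})$ with the $\vec{m}$'s ranging over a set in which only finitely many have bounded total degree. Hence $q_i \in z_i\bZ[[z_1,\dots,z_N]]$. The main obstacle I anticipate is purely bookkeeping rather than conceptual: carefully justifying that Condition (A) forces every $k_j(\vec{l},\vec{m})$ with $j \neq j_{\vec{l},\vec{m}}$ to be $\geq 0$ (so that the multinomial coefficient is genuinely a nonnegative-integer multinomial and Propositions \ref{prop:1.2}, \ref{prop:3.1} apply verbatim), and checking that the reindexing $\vec{m} \mapsto a\vec{m}$ together with $\gcd(\vec{m}) = 1$ correctly partitions all $\vec{m}'$ with $N(\vec{l},\vec{m}') = 1$ — i.e., that the rewriting \eqref{eqn:g1(i)} is a genuine regrouping with no double counting. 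Once that is pinned down, the integrality is an immediate consequence of the Section 4 theorems.
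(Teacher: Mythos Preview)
Your proposal is correct and follows essentially the same approach as the paper: the paper's proof is the single line ``This is a straightforward consequence of \eqref{eqn:g1(i)} and Theorem \ref{thm:},'' and you have accurately unpacked what that entails, namely factoring $\exp(h_i)$ over primitive $\vec{m}$ and applying the second part of Theorem \ref{thm:} to each single-variable factor. One minor correction to your bookkeeping: for $j \neq j_{\vec{l},\vec{m}}$ the $k_j$ are nonnegative (not necessarily positive) purely by the definition of $N(\vec{l},\vec{m})=1$, without invoking Condition (A); zero entries are harmless since $(0\cdot a)! = 1$ and they can be dropped from the multinomial before applying Theorem \ref{thm:}.
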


\begin{proof}
This is a straightforward consequence of \eqref{eqn:g1(i)} and Theorem \ref{thm:}.
\end{proof}

\section{Integrality of Local Open-Closed Mirror Maps}

In this section we will establish the integrality property of
the local open-closed mirror map for suitable brane geometry of  noncompact Calabi-Yau $n$-manifold
under suitable conditions on the charge vectors that define it,
based on the results in the preceding two sections.
The references for this section are \cite{May, Ler-May}.

\subsection{Charge vectors for $D$-branes in local Calabi-Yau geometries}

One can also use charge vectors to describe some D-branes in local Calabi-Yau geometries.
The charge vectors
\bea
&& L^{(i)} = (l^{(i)}_0, l^{(i)}_1, \dots, l^{(i)}_{N+n-1},0,0), \qquad i=1,\dots, N, \\
&& L^{(0)} = (1, -1, 0, \dots, 0, -1, 1)
\eea
describe an outer brane in $M$,
while the charge vectors
\bea
&& \tilde{L}^{(i)} = L^{(i)},  \qquad i=2,\dots, N, \\
&& \tilde{L}^{(N)} = L^{(1)} + L^{(0)}, \\
&& \tilde{L}^{(0)} = - L^{(0)} = (-1, 1, 0, \dots, 0, 1, -1)
\eea
describe an inner brane.
One can also consider other charge vectors,
for example,
take any nonempty subset $A$ of $\{1, \dots, N\}$,
define $L_A^{(0)} = - L^{(0)}$ and for $i=1, \dots, N$
\be
L_A^{(i)} = \begin{cases}
L^{(i)}, & i \notin A, \\
L^{(i)} + L^{(0)}, & i \in A.
\end{cases}
\ee
The charge vectors $\vec{L}_A=(L_A^{(0)}, \dots, L_A^{(N)})$
should describe other phases of the inner brane geometry.
The discussions below can be carried out also for such charge vectors.
We leave the details to the reader.

These charge vectors also describe noncompact Calabi-Yau $(n+1)$-folds.
This is a special case of the open-closed string duality \cite{May}.

\subsection{Extended Picard-Fuchs system and open-closed mirror map}

We will write $L^{(i)} = (L^{(i)}_0, \dots, L^{(i)}_{N+n+1})$
and $\tilde{L}^{(i)} = (\tilde{L}^{(i)}_0, \dots, \tilde{L}^{(i)}_{N+n+1})$.
Let $a_0, \dots, a_{N+n+2}$ be some variables and define
\be
Z_i = \prod_{j=0}^{N+n+1} a_i^{L^{(i)}_j}, \qquad i = 0, 1, \dots, N
\ee
and
\be
\tilde{Z}_i = \prod_{j=0}^{N+n+1} a_i^{\tilde{L}^{(i)}_j}, \qquad i = 0, 1, \dots, N.
\ee
They are local coordinates on the D-brane moduli space.
It is clear that
\be
Z_i = z_i, \qquad i = 1, \dots, N.
\ee
Furthermore,
\bea
&& Z_i = \tilde{Z}_i, \qquad i =2, \dots, N, \\
&& Z_0 = \frac{1}{\tilde{Z}_0}, \qquad Z_1 = \tilde{Z}_0 \tilde{Z}_1.
\eea
In the following we will write $Z_i$ as $z_i$ and $\tilde{Z}_i$ as $\tilde{z}_i$.

The extended Picard-Fuch system associated to the charge vectors
$L^{(i)}$, $i=0, 1$, $\dots$, $N+n+1$ can be obtained as follows.
Consider the following system of equations:
\be \label{eqn:PF02}
\prod_{L^{(i)}_j > 0} \pd_{a_j}^{L^{(i)}_j} S
= \prod_{L^{(i)}_j<0} \pd_{a_j}^{-L^{(i)}_j} S, \qquad
i =0, \dots, N.
\ee
Assume that $S= S(z_0, z_1, \dots, z_N)$.
Then one can rewrite \eqref{eqn:PF02} as
\be \label{eqn:PF2}
\prod_{L^{(i)}_j > 0} \prod_{a=1}^{L^{(i)}_j} (\sum_{k=0}^N L^{(k)}_j \theta_k - a) S
= z_i \cdot \prod_{L^{(i)}_j < 0} \prod_{b=0}^{-L^{(i)}_j-1} (\sum_{k=0}^N L^{(k)}_j \theta_k - a) S.
\ee
By the Frobenius method,
one gets the following fundamental solution:
\be
\Omega = \sum_{m_{0,\dots, N} \geq 0} C(\vec{M}, \vec{R})
 \cdot \vec{Z}^{\,\vec{R}+\vec{M}},
\ee
where $\vec{M} = (m_0, m_1, \dots, m_N)$, $\vec{R} = (r_0, r_1, \dots, r_N)$,
$ \vec{Z}^{\,\vec{R}+\vec{M}} = \prod_{i=0}^N z_i^{r_i+m_i}$,
and
\bea
C(\vec{M}, \vec{R})
& = & \prod_{j=0}^{N+n+1}
\frac{\Gamma(1+k_j(\vec{L}, \vec{R}))}{\Gamma(1+k_j(\vec{L}, \vec{R}+\vec{M}))} \\
& = & \frac{\prod_{k_j(\vec{L}, \vec{M}) < 0 }
\prod_{b=0}^{-k_j(\vec{L}, \vec{M})-1} (-b+k_j(\vec{L}, \vec{R}))}
{\prod_{k_j(\vec{L}, \vec{M}) > 0} \prod_{a=1}^{k_j(\vec{L}, \vec{M})}
(a+k_j(\vec{L}, \vec{R}))},
\eea
where $k_j(\vec{L}, \vec{M}) = \sum_{i=0}^N L^{(i)}_jm_i$.

Assume that $\vec{L}$ satisfies Condition (A).
Then one has the following solutions to the Picard-Fuch system \eqref{eqn:PF2}:
\ben
&& G_0 = \Omega|_{\vec{R}=0} =1, \\
&& G_1^{(i)} = \pd_{r_i} \Omega|_{\vec{R}=0} = \log z_i
+ \sum_{N(\vec{L}, \vec{M}) =1} l^{(i)}_{j_{\vec{L}, \vec{M}}}
\frac{(-1)^{k_{j_{\vec{L}, \vec{M}}}(\vec{L}, \vec{M})} (-k_{j_{\vec{L}, \vec{M}}}(\vec{L}, \vec{M}) -1)!}
{\prod_{j \neq j_{\vec{L}, \vec{M}}} k_j(\vec{L}, \vec{M})!} \vec{Z}^{\vec{M}}.
\een
The local open-closed mirror map is defined by
\be
Q_i = \exp (G_1^{(i)}/G_0), \qquad i =0, 1, \dots, N.
\ee
Similar things can be done  the inner brane case.
It is easy to see that
\be
Q_0 = \frac{1}{\tilde{Q}_0}, \qquad Q_1 = \tilde{Q}_0 \tilde{Q}_1, \qquad
Q_i = \tilde{Q}_i, i =2, \dots, N.
\ee

\begin{theorem} \label{thm:Mirror}
If the charge vectors $\vec{L}$ describing an outer brane geometry in a toric Calabi-Yau $n$-fold
satisfy Condition (A) and Condition (B),
then the associated local open-closed mirror map are given by integral series:
\be
Q_i \in z_i \bZ[[z_0,z_1, \dots, z_N]], \qquad i=0,1, \dots, N.
\ee
Similarly,
if the charge vectors $\vec{\tilde{L}}$ describing an inner brane geometry in a toric Calabi-Yau $n$-fold
satisfy Condition (A) and Condition (B),
then the associated local open-closed mirror map are given by integral series:
\be
\tilde{Q}_i \in \tilde{z}_i \bZ[[\tilde{z}_0, \tilde{z}_1, \dots, \tilde{z}_N]], \qquad i=0,1, \dots, N.
\ee
\end{theorem}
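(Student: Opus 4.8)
\medskip
\noindent\textbf{Proof sketch.}
The plan is to mimic, almost verbatim, the proof of the integrality of the closed local mirror map in Section 5; the point is that $G_1^{(i)}$ has exactly the same combinatorial shape as $g_1^{(i)}$. Since $G_0 = 1$ we have $Q_i = \exp G_1^{(i)}$, so it suffices to prove that $\exp(G_1^{(i)} - \log z_i) \in 1 + \sum_{j=0}^N z_j\bZ[[z_0, \dots, z_N]]$. First I would record that the scaling identities $k_j(\vec{L}, a\vec{M}) = a\,k_j(\vec{L}, \vec{M})$, $N(\vec{L}, a\vec{M}) = N(\vec{L}, \vec{M})$ and $j_{\vec{L}, a\vec{M}} = j_{\vec{L}, \vec{M}}$ hold for the (length $N+n+2$) open-closed charge vectors for exactly the same reasons as in the closed case. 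Grouping the multi-indices $\vec{M}$ with $N(\vec{L}, \vec{M}) = 1$ according to their primitive part $\vec{m}$ (so $\vec{M} = a\vec{m}$, $a \geq 1$) and using the Calabi-Yau relation $-k_{j_{\vec{L}, \vec{m}}}(\vec{L}, \vec{m}) = \sum_{j \neq j_{\vec{L}, \vec{m}}} k_j(\vec{L}, \vec{m})$, one rewrites $G_1^{(i)}$ in precisely the form \eqref{eqn:g1(i)}, with $\vec{l}, \vec{z}$ replaced by $\vec{L}, \vec{Z}$. Here Condition (A) is exactly what guarantees that the only nonconstant contributions come from the $\vec{M}$ with $N(\vec{L}, \vec{M}) = 1$, so that this expression is valid.

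Next, fix a primitive $\vec{m}$ with $N(\vec{L}, \vec{m}) = 1$, set $j_0 = j_{\vec{L}, \vec{m}}$, and let $n_1, \dots, n_t$ be the nonzero members of $\{k_j(\vec{L}, \vec{m})\}_{j \neq j_0}$; these are positive integers, $t \geq 1$, with $\sum_{s=1}^t n_s = -k_{j_0}(\vec{L}, \vec{m})$, and Condition (B) says precisely that $\gcd(n_1, \dots, n_t) = 1$. Dropping the vanishing factors coming from the zero values of $k_j$, the inner sum over $a$ attached to $\vec{m}$ equals $\sum_{a \geq 1} \frac{1}{a\sum_s n_s} \binom{a\sum_s n_s}{an_1, \dots, an_t} x^a$ with $x = (-1)^{k_{j_0}(\vec{L}, \vec{m})}\vec{Z}^{\vec{m}}$, so by the second assertion of Theorem \ref{thm:} (applied with its $m$ being our $a$ and its $k_i$ being our $n_i$) the exponential of this one-variable series lies in $1 + x\bZ[[x]]$. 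Substituting the monomial $x = \pm\vec{Z}^{\vec{m}}$ — which has positive total degree since $\gcd(\vec{m}) = 1$ forces $\vec{m} \neq \vec{0}$ — gives an element of $\bZ[[z_0, \dots, z_N]]$ that is $\equiv 1$ modulo $\vec{Z}^{\vec{m}}$.

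Consequently $\exp(G_1^{(i)} - \log z_i)$ is the product, over all such primitive $\vec{m}$, of these integral series raised to the integer powers $L^{(i)}_{j_{\vec{L}, \vec{m}}}$. This infinite product converges in the $(z_0, \dots, z_N)$-adic topology, since every factor is $\equiv 1$ modulo $\vec{Z}^{\vec{m}}$ with $\vec{m} \neq \vec{0}$ and only finitely many monomials $\vec{Z}^{\vec{m}}$ have a given total degree; and since $1 + \sum_{j=0}^N z_j\bZ[[z_0, \dots, z_N]]$ is a multiplicative group (in particular closed under inverses, which handles the negative exponents $L^{(i)}_{j_{\vec{L}, \vec{m}}}$), the product lies in it. Hence $Q_i = z_i\exp(G_1^{(i)} - \log z_i) \in z_i\bZ[[z_0, \dots, z_N]]$, which is the outer-brane assertion. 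The inner-brane case is identical: $\vec{\tilde{L}}$ likewise describes a toric Calabi-Yau $(n+1)$-fold, its extended Picard-Fuchs system produces a $\tilde{G}_1^{(i)}$ of the same shape, and running the same argument with $\vec{\tilde{L}}, \tilde{z}_i, \vec{\tilde{Z}}$ in place of $\vec{L}, z_i, \vec{Z}$ yields $\tilde{Q}_i \in \tilde{z}_i\bZ[[\tilde{z}_0, \dots, \tilde{z}_N]]$. The only steps needing genuine (if routine) care are the verification that Conditions (A) and (B) really reduce $G_1^{(i)}$ to the form \eqref{eqn:g1(i)}, and that the monomial substitution followed by the infinite product is legitimate; all of the arithmetic content has already been isolated in Theorem \ref{thm:} and the congruences of Section 3.
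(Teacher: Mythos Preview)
Your proof is correct and follows exactly the approach the paper intends: the paper gives no explicit proof here, but the closed-string analogue in Section~5 is proved in one line as ``a straightforward consequence of \eqref{eqn:g1(i)} and Theorem~\ref{thm:}'', and your argument simply spells out that same reduction for the extended charge vectors $\vec{L}$ (resp.\ $\vec{\tilde{L}}$), including the routine verification that the infinite product of integer powers of the one-variable integral series converges and stays in $1+\sum_j z_j\bZ[[z_0,\dots,z_N]]$.
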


For examples,
see \cite[Appendix A]{Ler-May}.

\subsection{Integrality properties of the inverse series}

We will also establish the integrality property of the inverse local open-closed  mirror maps
by the following easy observation:

\begin{lemma} \label{lm:Inverse1}
Suppose that
\be
Z_i = z_i + \sum_{m_0+ \cdots +m_n > 1} a^{(i)}_{m_0, \dots, m_n} z_0^{m_1} \cdots z_n^{m_n}
\ee
are formal power series in $\bZ[[z_0, \dots, z_n]]$
for $i=0, \dots, n$.
Then one can find their formal inverse series of the form:
\be
z_i = Z_i + \sum_{m_0+ \cdots +m_n > 1} b^{(i)}_{m_0, \dots, m_n} Z_0^{m_1} \cdots Z_n^{m_n}
\ee
in $\bZ[[z_0, \dots, z_n]]$
for $i=0, \dots, n$.
\end{lemma}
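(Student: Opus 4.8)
The plan is to build the inverse series degree by degree, using only that the linear part of the substitution $z \mapsto Z$ is the identity matrix, which is invertible over $\bZ$, so that no denominators are ever introduced. Write the given series as
$\Phi_i(z_0,\dots,z_n) = z_i + \sum_{m_0+\cdots+m_n>1} a^{(i)}_{m_0,\dots,m_n}\, z_0^{m_0}\cdots z_n^{m_n}$
with all $a^{(i)}_{m_0,\dots,m_n}\in\bZ$, and look for
$\Psi_i(Z_0,\dots,Z_n) = Z_i + \sum_{m_0+\cdots+m_n>1} b^{(i)}_{m_0,\dots,m_n}\, Z_0^{m_0}\cdots Z_n^{m_n}$
with coefficients $b^{(i)}_{m_0,\dots,m_n}$ to be determined recursively so that $\Phi_i(\Psi_0,\dots,\Psi_n)=Z_i$ holds as a formal identity for $i=0,\dots,n$.

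First I would substitute the (still unknown) $\Psi_j$ into $\Phi_i$ and compare the coefficient of a fixed monomial $Z_0^{m_0}\cdots Z_n^{m_n}$ of total degree $d$ on both sides. For $d=1$ the identity is automatic. For $d\ge 2$ the coefficient of $Z_0^{m_0}\cdots Z_n^{m_n}$ in $\Phi_i(\Psi)=\Psi_i+\sum_{|\vec m'|>1}a^{(i)}_{\vec m'}\,\Psi_0^{m'_0}\cdots\Psi_n^{m'_n}$ equals $b^{(i)}_{m_0,\dots,m_n}$ plus a contribution from the sum; since each $\Psi_j$ has order at least $1$ and the sum ranges over $|\vec m'|\ge 2$, that contribution involves only the coefficients $b^{(\cdot)}_{\vec m''}$ with $|\vec m''|\le d-1$ together with the integers $a^{(\cdot)}_{\vec m'}$. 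Thus the degree-$d$ equation reads $b^{(i)}_{m_0,\dots,m_n} = -P^{(i)}_{m_0,\dots,m_n}$, where $P^{(i)}_{\vec m}$ is a polynomial with integer coefficients in the $a$'s and in the already-determined lower-degree $b$'s. By induction on $d$ (base case $d=1$ vacuous) every $b^{(i)}_{\vec m}$ is then a well-defined rational integer, so $\Psi_i\in\bZ[[Z_0,\dots,Z_n]]$. The relation $\Phi_i(\Psi_0,\dots,\Psi_n)=Z_i$ holds by construction; that $\Psi$ is also a left inverse is the standard fact that a formal self-map with invertible Jacobian at the origin is a formal automorphism, so $\Psi$ is the genuine formal inverse of $\Phi$. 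Alternatively one may simply quote the formal inverse function theorem over the commutative ring $\bZ$ and observe that the recursion above is precisely its proof with all coefficients tracked.

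The only step that is not pure bookkeeping is the triangularity observation: in the degree-$d$ coefficient equation the unknowns of degree $d$ enter solely through the single term $b^{(i)}_{\vec m}$, with coefficient exactly $1$. This is what makes the recursion solvable over $\bZ$ rather than merely over $\bQ$, and it holds because every other summand $a^{(i)}_{\vec m'}\prod_j \Psi_j^{m'_j}$ has $\sum_j m'_j\ge 2$ with each $\Psi_j$ lacking a constant term, so its degree-$d$ part depends only on the $b$'s of degree $\le d-1$. I do not expect any further obstacle.
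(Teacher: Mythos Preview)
Your argument is correct: the recursion on total degree works precisely because the linear part of $\Phi$ is the identity, so each $b^{(i)}_{\vec m}$ is isolated with coefficient $1$ and equals an integer polynomial in the $a$'s and in the lower-degree $b$'s; the induction then forces $b^{(i)}_{\vec m}\in\bZ$. The remark about the two-sided inverse is also fine.

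As for comparison: the paper does not actually supply a proof of this lemma at all --- it introduces it only as an ``easy observation'' and moves on. Your write-up is exactly the standard verification one would expect for such an observation, so it is entirely in the spirit of what the author had in mind. The paper's subsequent Lemma (via the Lagrange--Good inversion formula) gives an \emph{explicit} expression for the $b^{(i)}_{\vec m}$, but that is offered as additional information rather than as the proof of integrality; integrality is not read off from that formula. So your recursive argument is the appropriate proof here.
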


Furthermore,
one can find the inverse series explicitly using the following:

\begin{lemma} \label{lm:Inverse2}
Suppose that for $i=0, 1, \dots, n$,
$Z_i =z_i e^{f_i(z_0, \dots, z_n)}$
for some analytic function $f_i(z_0, \dots, z_n)$ such that $f_i(0, \dots, 0) = 0$.
Then there is an inverse map of the form
$z_i =Z_i + \sum_{m_0+ \cdots +m_n > 0} b^{(i)}_{m_0, \dots, m_n} Z_0^{m_0} \cdots Z_n^{m_n}$,
where
$b^{(i)}_{m_0, \dots, m_n}$
is the coefficient of $\prod_{j=0}^n z_i^{m_j-\delta_{ij}}$ in
\be \label{eqn:Inverse}
 \exp (-\sum_{j=0}^n m_j f_j(z_0, \dots, z_n))
 \cdot \begin{vmatrix}
1 + \theta_0 f_0 & \theta_0 f_1 & \cdots & \theta_0 f_n \\
\theta_1 f_0 & 1 + \theta_1 f_1 & \cdots & \theta_1 f_n \\
\vdots & \vdots & & \vdots \\
\theta_n f_0 & \theta_n f_1 & \cdots & 1+ \theta_n f_n
\end{vmatrix}.
\ee
\end{lemma}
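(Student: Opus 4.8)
The plan is to read Lemma \ref{lm:Inverse2} as a special case of the multivariate Lagrange--Good inversion formula and to prove it by a residue (equivalently, Cauchy integral) computation. First I would note that the substitution $z_j \mapsto Z_j = z_j\, e^{f_j(z_0,\dots,z_n)}$ has Jacobian $\partial Z_j/\partial z_k = e^{f_j}\bigl(\delta_{jk} + z_j\,\partial_{z_k} f_j\bigr)$, which reduces to the identity matrix at the origin because $f_j(0,\dots,0) = 0$; hence the map is invertible, the inverse $z_i = z_i(Z)$ is the unique one supplied by Lemma \ref{lm:Inverse1}, and from $z_i(Z) = Z_i\, e^{-f_i(z(Z))}$ one sees that $z_i(Z)$ is divisible by $Z_i$. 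In particular the coefficient $b^{(i)}_{\vec m}$ (writing $\vec m = (m_0,\dots,m_n)$) vanishes unless $m_i \geq 1$, which is exactly what makes the exponent $m_j - \delta_{ij}$ in the statement legitimate. I would then write
\[
b^{(i)}_{\vec m} = \res_{\vec Z}\Bigl( z_i(Z)\,\prod_{j=0}^n Z_j^{-m_j-1}\, dZ_0\cdots dZ_n \Bigr)
\]
and change variables back to $z$ using $dZ_0\cdots dZ_n = \det(\partial Z_j/\partial z_k)\, dz_0\cdots dz_n$, noting that $z_i(Z(z)) = z_i$ and $\prod_j Z_j^{-m_j-1} = \prod_j z_j^{-m_j-1}\, e^{-\sum_j (m_j+1) f_j}$.

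The one algebraic point of substance is to factor the Jacobian determinant. Pulling out $\prod_j e^{f_j}$ leaves $\det\bigl(\delta_{jk} + z_j\,\partial_{z_k} f_j\bigr) = \det(I + DC)$ with $D = \operatorname{diag}(z_0,\dots,z_n)$ and $C_{jk} = \partial_{z_k} f_j$; by Sylvester's identity $\det(I+DC) = \det(I+CD)$, and since $(CD)_{jk} = z_k\,\partial_{z_k} f_j = \theta_k f_j$ this equals $\det(\delta_{ij} + \theta_i f_j)$, the determinant appearing in \eqref{eqn:Inverse} (up to transposition). Thus $\det(\partial Z_j/\partial z_k) = e^{\sum_j f_j}\cdot\det(\delta_{ij} + \theta_i f_j)$. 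Inserting this into the residue, the exponential factors combine to $e^{-\sum_j m_j f_j}$ and the monomial factors to $\prod_j z_j^{-m_j-1+\delta_{ij}}$, so the residue reads off as $b^{(i)}_{\vec m} = \bigl[\prod_j z_j^{m_j-\delta_{ij}}\bigr]\bigl( e^{-\sum_j m_j f_j}\,\det(\delta_{ij}+\theta_i f_j)\bigr)$, which is the assertion.

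The main obstacle is the change-of-variables law for the iterated residue, $\res_{\vec Z}(\varphi(Z)\,dZ) = \res_{\vec z}\bigl(\varphi(Z(z))\det(\partial Z/\partial z)\,dz\bigr)$ for this substitution --- this is precisely the content of the Lagrange--Good formula, which one may simply invoke \cite{Good}, or prove by induction on $n$ from the one-variable case (the substitution being legitimate since its Jacobian is a unit with constant term $1$; read analytically it is just multivariable calculus on small tori). Everything else --- Sylvester's identity and the bookkeeping of the exponents of the $z_j$ and of the exponential factors --- is elementary; the only subtlety is tracking which variable is differentiated in the Jacobian, so that the Euler operators $\theta_i$ rather than the bare $\partial_{z_i}$ appear, which is exactly what the transposition/Sylvester step supplies. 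A quick consistency check: for $m_i = 1$ and the other $m_j = 0$ the formula gives the constant term of $e^{-f_i}\det(\delta_{ij}+\theta_i f_j)$, namely $1$, correctly reproducing the leading term $Z_i$ of $z_i(Z)$.
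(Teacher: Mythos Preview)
Your proof is correct and follows essentially the same route as the paper's: both invoke the Lagrange--Good inversion formula \cite{Good}, express $b^{(i)}_{\vec m}$ as an iterated residue/Cauchy integral in the $Z$-variables, and change variables back to $z$ using the Jacobian $dZ_0\wedge\cdots\wedge dZ_n = e^{\sum_j f_j}\det(\delta_{ij}+\theta_i f_j)\,dz_0\wedge\cdots\wedge dz_n$. Your version adds the Sylvester-identity justification for why the Euler operators $\theta_i$ appear in the Jacobian and the observation that $z_i(Z)$ is divisible by $Z_i$, details the paper leaves implicit.
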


\begin{proof}
This is a special case of the Lagrange-Good inversion formula \cite{Good}.
First of all,
we have
\ben
&& dZ_0 \wedge \cdots \wedge dZ_n \\
& = & e^{\sum_{j=0}^n f_j(z_0,\dots, z_n)}
 \begin{vmatrix}
1 + \theta_0 f_0 & \theta_0 f_1 & \cdots & \theta_0 f_n \\
\theta_1 f_0 & 1 + \theta_1 f_1 & \cdots & \theta_1 f_n \\
\vdots & \vdots & & \vdots \\
\theta_n f_0 & \theta_n f_1 & \cdots & 1+ \theta_n f_n
\end{vmatrix}
\cdot dz_0 \wedge \cdots \wedge dz_n.
\een
Therefore,
\ben
&& b_{m_0, \dots, m_n}^{(i)}
= \res_{Z_0=0} \cdot \res_{Z_n=0} \frac{z_i}{Z_0^{m_0+1} \cdots Z_n^{m_n+1}} \\
& = & \frac{1}{(2\pi i)^{n+1}} \oint \cdots \oint \frac{z_i}{\prod_{j=0}^n Z_j^{m_j+1}} dZ_0 \wedge \cdots \wedge dZ_n \\
& = & \frac{1}{(2\pi i)^{n+1}} \oint \cdots \oint
\frac{1}{\prod_{j=0}^n z_j^{m_j+1-\delta_{i,j}}} \exp (-\sum_{j=0}^n m_j f_j(z_0, \dots, z_n)) \\
&& \cdot  \begin{vmatrix}
1 + \theta_0 f_0 & \theta_0 f_1 & \cdots & \theta_0 f_n \\
\theta_1 f_0 & 1 + \theta_1 f_1 & \cdots & \theta_1 f_n \\
\vdots & \vdots & & \vdots \\
\theta_n f_0 & \theta_n f_1 & \cdots & 1+ \theta_n f_n
\end{vmatrix}
\cdot dz_0 \wedge \cdots \wedge dz_n.
\een
\end{proof}

\begin{remark} \label{rmk:Inverse}
A particular case is when $f_i$'s are only functions of $z_1, \dots, z_n$.
Then \eqref{eqn:Inverse} becomes
\be
 \exp (-\sum_{j=0}^n m_j f_j(z_1, \dots, z_n))
 \cdot \begin{vmatrix}
1+ \theta_1 f_1 & \theta_{z_1} f_2 & \cdots & \theta_1 f_n \\
\vdots & \vdots & & \vdots \\
\theta_n f_1 & \theta_n f_2 & \cdots & 1+ \theta_n f_n
\end{vmatrix}.
\ee
If follows that for $i=1, \dots, n$, $b^{(i)}_{m_0, \dots, m_n}$
is $\delta_{m_0,0}$ times the coefficient of $\prod_{j=1}^n z_i^{m_j-\delta_{ij}}$ in
\be
 \exp (-\sum_{j=1}^n m_j f_j )
 \cdot \begin{vmatrix}
1+ \theta_1 f_1 & \theta_{z_1} f_2 & \cdots & \theta_1 f_n \\
\vdots & \vdots & & \vdots \\
\theta_n f_1 & \theta_n f_2 & \cdots & 1+ \theta_n f_n
\end{vmatrix},
\ee
and
$b^{(0)}_{m_0, \dots, m_n}$ is $\delta_{m_0,1}$ times the coefficient of $z_1^{m_1} \cdots z_n^{m_n}$
in
\be
 \exp (-f_0 - \sum_{j=1}^n m_j f_j )
 \cdot \begin{vmatrix}
1+ \theta_1 f_1 & \theta_{z_1} f_2 & \cdots & \theta_1 f_n \\
\vdots & \vdots & & \vdots \\
\theta_n f_1 & \theta_n f_2 & \cdots & 1+ \theta_n f_n
\end{vmatrix}.
\ee
\end{remark}

\subsection{Superpotential functions and integrality properties of the local mirror curve}

In the outer brane case described by charge vectors $\vec{L}$,
take the superpotential to be the following part of
$\half \pd_{r_0}^2\Omega|_{\vec{R}=0}$:
\be
W = \sum_{\substack{m_0>0, k_1(\vec{L}, \vec{M}) <0 \\k_j(\vec{L}, \vec{M})\geq 0, j=0,2, \dots, N+n-1}}
\frac{(-1)^{k_1(\vec{L}, \vec{M})+m_0} (-k_1(\vec{L}, \vec{M})-1)!}{m_0 \cdot
\prod_{0 \leq j \leq N+n-1, j\neq 1} k_j(\vec{L}, \vec{M})!} \cdot \vec{Z}^{\,\vec{M}}.
\ee
The $n=3$ case can be found in \cite[(3.9)]{Ler-May}.
Similarly,
in the inner brane case described by charge vectors $\vec{L}$,
take the superpotential to be following part of
$\half \pd_{r_0}^2 \tilde{\Omega}|_{\vec{R}=0}$:
\bea
&& \tilde{W} = \sum_{\substack{m_0 \neq m_1, k_0(\vec{\tilde{L}}, \vec{M}) < 0 \\k_j(\vec{\tilde{L}}, \vec{M})\geq 0, j=1, \dots, N+n-1}}
\frac{(-1)^{k_1(\vec{\tilde{L}}, \vec{M})+m_0-m_1} (-k_0(\vec{\tilde{L}}, \vec{M})-1)!}{(m_0-m_1) \cdot
\prod_{j=1}^{N+n-1} k_j(\vec{\tilde{L}}, \vec{M})!} \cdot \vec{\tilde{Z}}^{\,\vec{M}}.
\eea
Examples of the $n=3$ case can be found in  \cite[Apeendix A]{Ler-May}.
Note we have
\ben
&& \theta_0 W = \sum_{\substack{m_0>0, k_1(\vec{L}, \vec{M}) <0 \\k_j(\vec{L}, \vec{M})\geq 0, j=0,2, \dots, N+n-1}}
\frac{(-1)^{k_1(\vec{L}, \vec{M})+m_0} (-k_1(\vec{L}, \vec{M})-1)!}
{\prod_{0 \leq j \leq N+n-1, j\neq 1} k_j(\vec{L}, \vec{M})!} \cdot \vec{Z}^{\,\vec{M}}.
\een
\ben
&& (\tilde{\theta}_0 - \tilde{\theta}_1) \tilde{W}
 = \sum_{\substack{m_0 \neq m_1, k_0(\vec{\tilde{L}}, \vec{M}) < 0 \\k_j(\vec{\tilde{L}}, \vec{M})\geq 0, j=1, \dots, N+n-1}}
\frac{(-1)^{k_1(\vec{\tilde{L}}, \vec{M})+m_0-m_1} (-k_0(\vec{\tilde{L}}, \vec{M})-1)!}
{\prod_{j=1}^{N+n-1} k_j(\vec{\tilde{L}}, \vec{M})!} \cdot \vec{\tilde{Z}}^{\,\vec{M}}.
\een
Therefore,
similar to Theorem \ref{thm:Mirror}
we have:

\begin{theorem}
If the charge vectors $\vec{L}$ describing an outer brane geometry in a toric Calabi-Yau $n$-fold,
then one has:
\be
\exp (- \theta_0 W) \in \bZ[[z_0,z_1, \dots, z_N]].
\ee
Similarly,
if the charge vectors $\vec{\tilde{L}}$ describing an inner brane geometry in a toric Calabi-Yau $n$-fold,
then one has:
\be
\exp(- (\tilde{\theta}_0-\tilde{\theta}_1) \tilde{W}) \in \bZ[[\tilde{z}_0, \tilde{z}_1, \dots, \tilde{z}_N]].
\ee
\end{theorem}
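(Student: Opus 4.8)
The plan is to reduce both statements to the integrality results already established in Section~4, just as Theorem~\ref{thm:Mirror} was obtained from Theorem~\ref{thm:}. The key observation is that $\theta_0 W$ (resp. $(\tilde\theta_0-\tilde\theta_1)\tilde W$) has \emph{exactly the same shape} as the derivative-of-$\Omega$ series $G_1^{(i)} - \log z_i$ that appears in the open-closed mirror map, up to the sign $(-1)^{k_1(\vec L,\vec M)+m_0}$: indeed, writing $j_{\vec L,\vec M}=1$ for the unique index with $k_1<0$, the coefficient of $\vec Z^{\,\vec M}$ in $\theta_0 W$ is
\be
\frac{(-1)^{k_1(\vec L,\vec M)+m_0}(-k_1(\vec L,\vec M)-1)!}{\prod_{j\neq 1} k_j(\vec L,\vec M)!},
\ee
which is (up to sign) the same multinomial-type quotient $\dfrac{(\sum_{j\neq 1}k_j(\vec L,\vec M)-1)!}{\prod_{j\neq 1}k_j(\vec L,\vec M)!}$ that was controlled by Proposition~\ref{prop:1} and Proposition~\ref{prop:3}, using the Calabi-Yau relation $-k_1(\vec L,\vec M)=\sum_{j\neq 1}k_j(\vec L,\vec M)$ from \eqref{eqn:CY}.

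First I would strip off the sign. The sign $(-1)^{k_1(\vec L,\vec M)+m_0}$ can be absorbed by the substitution $z_i\mapsto \pm z_i$, or more precisely by noting that a monomial $\vec Z^{\,\vec M}$ carries sign $(-1)^{k_1(\vec L,\vec M)+m_0}$ and that this sign is \emph{multiplicative} in $\vec M$ (it is the restriction to the support of the series of a group homomorphism $\bZ^{N+1}\to\{\pm1\}$), so replacing each $z_i$ by $\epsilon_i z_i$ for suitable signs $\epsilon_i$ turns $-\theta_0 W$ into a series with the absolute values of the coefficients of $\theta_0 W$; since integrality of a power series is unaffected by such a change of variables, it suffices to prove $\exp(|{\theta_0 W}|)\in\bZ[[z_0,\dots,z_N]]$, i.e. the same statement with the sign removed. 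Then, exactly as in the proof of the first assertion of Theorem~\ref{thm:Mirror}, I would apply Lemma~\ref{lm:Dwork2}: the condition $f(\bX^p)-pf(\bX)\in p\sum X_i\bZ_p[[\bX]]$ reduces, term by term over primitive exponent vectors $\vec M$ with $N(\vec L,\vec M)=1$, to the two statements that $\dfrac{1}{\sum_{j\neq 1}k_j(\vec L,\vec M)}\cdot\dfrac{(\sum_{j\neq1}k_j(\vec L,\vec M))!}{\prod_{j\neq1}k_j(\vec L,\vec M)!}$ lies in $\bZ_p$ when $p\nmid\gcd$, and that the corresponding difference at $\vec M$ versus $\vec M/p$ lies in $\bZ_p$ when $p\mid$ all $k_j$; these are precisely Proposition~\ref{prop:1} and Proposition~\ref{prop:3}. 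Note that the extra factor $1/m_0$ present in $W$ itself is cancelled by $\theta_0$, which is why one works with $\theta_0 W$ rather than $W$; and the factorial $(-k_1-1)!=(\sum_{j\neq1}k_j-1)!$ in the numerator is exactly what supplies the $1/(\sum k_j)$ needed to match Proposition~\ref{prop:1}. The inner-brane case is identical, with $\vec L$ replaced by $\vec{\tilde L}$, the index $1$ replaced by $0$, the operator $\theta_0$ replaced by $\tilde\theta_0-\tilde\theta_1$, and $m_0-m_1$ in place of $m_0$ — the same cancellation occurs and the same two propositions apply.

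The main obstacle is \emph{not} a deep one but a bookkeeping one: one must check that Condition~(A) is not actually needed here (the sums defining $\theta_0 W$ and $(\tilde\theta_0-\tilde\theta_1)\tilde W$ range only over $\vec M$ with $N(\vec L,\vec M)=1$, so no growth/convergence issue arises), that Condition~(B) is likewise unnecessary because Proposition~\ref{prop:3} and Proposition~\ref{prop:1} together cover \emph{all} primitive $\vec M$ regardless of whether $\gcd\{k_j(\vec L,\vec M)\}=1$ — the $\gcd=1$ hypothesis was only needed in Theorem~\ref{thm:} to divide by $\sum k_i m$ rather than by $m$, whereas here we divide by $\sum_{j\neq1}k_j(\vec L,\vec M)$ directly — and finally that the sign-homomorphism argument above is legitimate, i.e. that $(-1)^{k_1(\vec L,\vec M)+m_0}$ really does factor through a character of the exponent lattice (this follows from $k_1(\vec L,\vec M)=\sum_i L^{(i)}_1 m_i$ being linear in $\vec M$). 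Once these three points are dispatched, the proof is a direct citation of Lemma~\ref{lm:Dwork2}, Proposition~\ref{prop:1}, and Proposition~\ref{prop:3}, in parallel with the proof of Theorem~\ref{thm:Mirror}.
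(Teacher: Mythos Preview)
Your overall strategy---absorb the sign via the character $(-1)^{k_1(\vec L,\vec M)+m_0}=\prod_i\epsilon_i^{m_i}$, then invoke Dwork's Lemma together with Propositions~\ref{prop:1} and~\ref{prop:3}, in parallel with the derivation of Theorem~\ref{thm:Mirror}---is exactly the route the paper takes (its ``proof'' is the single sentence ``similar to Theorem~\ref{thm:Mirror}'').

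There is, however, a genuine gap in your claim that Condition~(B) is unnecessary. Your justification is that ``the $\gcd=1$ hypothesis was only needed in Theorem~\ref{thm:} to divide by $\sum k_im$ rather than by $m$, whereas here we divide by $\sum_{j\neq1}k_j(\vec L,\vec M)$ directly.'' But this is a misreading: once you group by a primitive exponent vector $\vec M_0$ and write $\vec M=a\vec M_0$, the coefficient of $\vec Z^{\,\vec M}$ in $\theta_0 W$ is
\[
\frac{1}{\sum_{j\neq1}k_j(\vec L,\vec M_0)\cdot a}\binom{\sum_{j\neq1}k_j(\vec L,\vec M_0)\cdot a}{k_0(\vec L,\vec M_0)a,\;k_2(\vec L,\vec M_0)a,\;\dots},
\]
which is precisely the ``divide by $\sum k_i m$'' situation of the \emph{second} assertion of Theorem~\ref{thm:}, not the first. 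Equivalently, in the direct multivariate Dwork approach, the case $p\nmid\gcd(m_0,\dots,m_N)$ requires Proposition~\ref{prop:1}, whose hypothesis is $(p,K_0,K_2,\dots)=1$; nothing you wrote bridges the gap between $p\nmid\gcd(m_i)$ and $p\nmid\gcd(K_j)$. That gap is real: for instance $\exp\bigl(\sum_{m\ge1}\frac{(4m)!}{((2m)!)^2}\frac{x^m}{4m}\bigr)=1+\tfrac32 x+\tfrac{79}{8}x^2+\cdots$ is not integral, showing that the second part of Theorem~\ref{thm:} genuinely fails without $\gcd(k_i)=1$. So Condition~(B) (for the charge vectors $\vec L$, resp.\ $\vec{\tilde L}$) is needed for the argument, just as in Theorem~\ref{thm:Mirror}; the paper's theorem statement simply omits it, but its one-line proof inherits the hypothesis. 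Your remark about Condition~(A) being dispensable is correct, since the sum defining $\theta_0 W$ is already restricted to the locus $N(\vec L,\vec M)=1$.
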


For examples,
see \cite[Appendix A]{Ler-May}.
According to \cite{AV, AKV},
$y= \exp (-\theta_0 Q)$ gives the equation of the local mirror curve.
We will clarify the geometric meaning of $\exp(- (\tilde{\theta}_0-\tilde{\theta}_1) \tilde{W})$
in a forthcoming paper \cite{Zho2}.

\end{document}